\newcommand{\ubar}[1]{\underaccent{\bar}{#1}}
\renewcommand{\algorithmiccomment}[1]{\bgroup\hfill//~#1\egroup}
\numberwithin{equation}{section}
\newlength{\fixboxwidth}
\def\h{h}
\def\z{\zeta}
\def\dt{\Delta t}
\def\V{\mathfrak{V}}
\def\epsilon{\varepsilon}
\def\epsilon{\varepsilon}
\def\db{\mathbf{d}}
\def\R{\mathbb{R}}
\def\N{\mathcal{N}}
\def\Z{\mathcal{Z}}
\def\E{\mathbb{E}}
\def\Z{\mathcal{Z}}
\def\q{r}
\def\I{\mathcal{I}}
\def\J{\mathcal{J}}
\def\L{\mathcal{L}}
\def\W{\mathfrak{W}}
\def\T{\mathcal{T}}
\def\<{\big\langle}
\def\>{\big\rangle}
\def\Img{\operatorname{Im}}
\def\Inv{\operatorname{Inv}}
\def\Ker{\operatorname{Ker}}
\def\Span{\operatorname{span}}
\def\diiv{\operatorname{div}}
\def\supp{{\operatorname{support}}}
\def\diag{\operatorname{diag}}
\def\loc{{\rm loc}}
\def\err{{\rm err}}
\def\app{{\mathrm{ap}}}
\definecolor{red}{rgb}{0.9, 0, 0}
\newtheorem{Theorem}{Theorem}[section]
\newtheorem{Lemma}[Theorem]{Lemma}
\newtheorem{Remark}[Theorem]{Remark}
\newtheorem{Example}{Example}[section]
\newtheorem{Definition}[Theorem]{Definition}
\newtheorem{Construction}[Theorem]{Construction}
\begin{document}
\title{Gamblets for opening the complexity-bottleneck of implicit schemes
for  hyperbolic and parabolic ODEs/PDEs with rough coefficients}

\date{\today}

\author{Houman Owhadi\footnote{California Institute of Technology, Computing \& Mathematical Sciences , MC 9-94 Pasadena, CA 91125, owhadi@caltech.edu} and Lei Zhang\footnote{Shanghai Jiao Tong University,
School of Mathematical Sciences, Institute of Natural Sciences, and Ministry of Education Key Laboratory of Scientific and Engineering Computing (MOE-LSC), Shanghai, 200240, China, lzhang2012@sjtu.edu.cn}}

\maketitle

\begin{abstract}
Implicit schemes are popular methods for the integration of  time dependent PDEs such as hyperbolic and parabolic PDEs. However the necessity to solve corresponding linear systems at each time step constitutes a complexity  bottleneck in their application to PDEs with rough coefficients. We present a generalization of gamblets introduced in
 \cite{OwhadiMultigrid:2015} enabling the  resolution of these implicit systems in near-linear complexity and provide rigorous a-priori error bounds on the resulting numerical approximations of  hyperbolic and parabolic PDEs. These generalized gamblets
induce a multiresolution decomposition of the solution space that is adapted to both the underlying (hyperbolic and parabolic) PDE (and the system of ODEs resulting from  space discretization) and to the time-steps of the numerical scheme.
\end{abstract}


\section{Introduction}\label{secdt}
Implicit schemes are popular and powerful methods for the integration of time dependent PDEs such as hyperbolic and parabolic PDEs \cite{LentVandewalle:2005, Hochbruck:2015a, Hochbruck:2015b, BoomZingg:2015}. However the necessity to solve corresponding linear systems at each time step constitutes a complexity  bottleneck in their application to PDEs with rough coefficients.

Although multigrid methods \cite{Fedorenko:1961, Brandt:1973, Hackbusch:1978}  have been successfully generalized to  time dependent equations \cite{Lubich:1987, Vandewalle:1993, LentVandewalle:2005, Yavneh:2006, Erlangga:2006, Wieners:2010, Hochbruck:2015a}, their convergence rate can be severely affected by the lack of regularity of the coefficients \cite{wan2000}. While some degree of robustness can be achieved with  algebraic multigrid \cite{RugeStuben1987}, multilevel finite element splitting \cite{Yserentant1986}, hierarchical basis multigrid \cite{BankYserentant88}, multilevel preconditioning \cite{Vassilevski89}, stabilized hierarchical basis methods \cite{Vassilevski1997sirev} and energy minimization \cite{Mandel1999, wan2000, Xu2004}, the design of multigrid/multiresolution methods that are provably robust with respect to rough ($L^\infty$) coefficients was an open problem of practical importance \cite{BraWu09} addressed in \cite{OwhadiMultigrid:2015} with the introduction of gamblets  (in $\mathcal{O}(N\ln^{3d}N)$ complexity for the first solve and $\mathcal{O}(N\ln^{d+1}N)$ for subsequent solves  to achieve grid-size accuracy in $H^1$-norm for elliptic problems).
Numerical evidence suggests the robustness of low rank matrix decomposition based methods such as the Fast Multipole Method \cite{GreengardRokhlin:1987, YingBiros:2004a}, Hierarchical matrices \cite{HackbuschGrasedyck:2002, Bebendorf:2008} and Hierarchical Interpolative Factorization \cite{HoYing:2016} and while this robustness  can be proven rigorously for  Hierarchical matrices \cite{ Bebendorf:2008} (the complexity of Hierarchical matrices is $\mathcal{O}(N\ln^{2d+8}N)$  to achieve grid-size accuracy in $L^2$-norm for elliptic problems \cite{ Bebendorf:2008}) one may wonder if it is possible to rigorously lower this known complexity bound and achieve (at the same time) a meaningful multi-resolution decomposition of the solution space for time dependent problems. Although
classical wavelet based methods \cite{Beylkin:1995, Beylkin:1998,  DorobantuEngquist1998} enable a multi-resolution decomposition of the solution space their performance is also affected by the regularity of coefficients because they are not adapted to the underlying PDEs.

In section \ref{secgamblets} we present a generalization of gamblets introduced in \cite{OwhadiMultigrid:2015} and apply them in sections \ref{secwavepde} and \ref{secparabolic} to the implicit schemes for hyperbolic and parabolic PDEs with rough coefficients. As in \cite{OwhadiMultigrid:2015} these generalized gamblets (1) are elementary solutions of hierarchical information games associated with the process of computing with partial information and limited resources, (2) have a natural Bayesian interpretation under the mixed strategy emerging from the game theoretic formulation, (3) induce a  multi-resolution decomposition of the solution space that is adapted to the space-time numerical discretization of the underlying PDE and propagate the solution independently (at each time-step) in each sub-band of the decomposition. The complexity of pre-computing generalized gamblets  is $N\ln^{3d}N$ and that of propagating the solution is $N\ln^{d+1}N$ (at each time step, to achieve grid-size accuracy in energy norm).
Although real valued gamblets are sufficient for first and second order implicit schemes, higher order implicit schemes may require complex valued gamblets. These complex valued gamblets are introduced  and their application to higher order schemes is illustrated in
Section \ref{seccomplexgamblets}. Observing that the multiresolution decomposition induced by gamblets has properties that are similar to an eigenspace decomposition, we introduce, in Section \ref{secmultitimestep}, a multi-time-step scheme for solving parabolic PDEs (with rough coefficients) in $\mathcal{O}(N \ln^{3d+1} N)$ complexity.

Gamblets are derived from a Game Theoretic approach to Numerical Analysis  \cite{OwhadiMultigrid:2015, gamblet17} which could be seen as decision theory approach to numerical analysis \cite{Wald:1945, OwhadiScovel:2015w}. We  refer to the information based complexity literature for an understanding of the natural connection between the notions of computing with partial/priced information and numerical analysis (we refer in particular to \cite{Woniakowski1986, Packel1987, Traub1988, Nemirovsky1992, Woniakowski2009, Ritter2000, Novak2010}). Although  statistical approaches to numerical analysis   \cite{Diaconis:1988, Poincare:1896, Suldin1959, Larkin1972, Sard1963, Kimeldorf70, Shaw:1988, Hagan:1991, Hagan:1992}  have, in the past, received little attention, perhaps due to the counterintuitive nature of the process of randomizing a {\it known} function, the possibilities offered by combining numerical uncertainties/errors with  model uncertainties/errors appear to be stimulating their reemergence  \cite{ChkrebtiiCampbell2015, schober2014nips, Owhadi:2014,Hennig2015, Hennig2015b, Briol2015, Conrad2015, raissi2017inferring, perdikaris2016multifidelity, gamblet17, SchaeferSullivanOwhadi17}. We refer in particular to \cite{Skilling1992, schober2014nips, ChkrebtiiCampbell2015} for ODEs and to \cite{Owhadi:2014,OwhadiMultigrid:2015, gamblet17, Cockayneetal2016} for PDEs. Here the game theoretic approach of \cite{OwhadiMultigrid:2015} is applied to both PDEs and the system of ODEs resulting from their discretization. The multiscale nature of the underlying PDEs results in the stiffness of the corresponding ODEs (these ODEs are non only stiff \cite{TaoOwma10, TaoOwma11, TaoOwma11b} they are also characterized by a large range/continuum of time scales \cite{Owhadi:2003, Owhadi:2004, BenarousOwhadi:2003}). Although it is natural to integrate such ODEs by an eigenspace decomposition when the dimension of the system of ODEs is small, the cost of such an approach is in general prohibitive.
It is to some degree surprising that  gamblets have properties that are similar to eigenfunctions, or more precisely Wannier basis functions \cite{Wannier1937,Marzari2012} (i.e. linear combinations of eigenfunctions concentrated around a given eigenvalue that are also concentrated in space), while preserving the near-linear complexity of the integration.

Since (see  \cite{OwhadiMultigrid:2015}) Gamblets are also natural basis functions for numerical homogenization
 \cite{WhHo87, BaOs:1983, HoWu:1997, EEngquist:2003, OwZh:2007a,  EfGiHouEw:2006,BeOw:2010, BaLip10, DesDonOw:2012, Sym12, Wang:2012, OwZh:2011,  MaPe:2012, OwhadiZhangBerlyand:2014, HouLiu2015, Peterseim2016mug} they can also be employed to achieve sub-linear complexity under sufficient regularity of source terms and initial conditions (see \cite{OwZh06c, OwZh:2007b, OwZh:2011} and Remark \ref{rmksublinear}).

 We also refer to \cite{gamblet17} for a generalization of gamblets to arbitrary continuous linear bijections on Banach spaces.
 As discussed in \cite{gamblet17} gamblets also provide a solution to the problem of identifying
  operator adapted wavelets  \cite{cohen1992biorthogonal,  Bacrywav92,  engquist1994fast, carnicer1996local,  Liandrat01,  Dahmenwav05, Beylkinwav08, sweldens1998lifting, VassilevskiWang1997a,  Sudarshanwav05} satisfying three essential properties (see \cite{stevenson2009adaptive, Sudarshanwav05} for an overview): (a) scale-orthogonality
(with respect to the  operator scalar product to ensure block-diagonal stiffness matrices) (b) local support (or rapid decay) of the wavelets (to
ensures that the individual blocks are sparse) and (c) Riesz stability in the energy norm
(to ensure that the blocks are well-conditioned).

\section{Gamblets}\label{secgamblets}
We will, in this section, present a generalization of the gamblets introduced in \cite{OwhadiMultigrid:2015}. Since the proofs of the results presented in this section are similar to those given in \cite{OwhadiMultigrid:2015} we will refer the reader to \cite{OwhadiMultigrid:2015} and to \cite{gamblet17} for these proofs.

\subsection{The PDE}
Let $\zeta > 0$. Consider the PDE
\begin{equation}\label{eqnscalarzeta}
\begin{cases}
    \frac{4}{\z^2}\mu(x) u(x)-\diiv \big(a(x)  \nabla u(x)\big)=g(x) \quad  x \in \Omega; \\
    u=0 \quad \text{on}\quad \partial \Omega,
    \end{cases}
\end{equation}
where $\Omega$ is a bounded domain in $\R^d$ (of arbitrary dimension $d\in \mathbb{N}^*$) with piecewise Lipschitz boundary,
$a$ is a symmetric, uniformly elliptic $d\times d$ matrix with entries in $L^\infty(\Omega)$ and such that for all $x\in \Omega$ and $l\in \R^d$,
\begin{equation}\label{eqaineq}
\lambda_{\min}(a) |l|^2 \leq l^T a(x) l \leq \lambda_{\max}(a)|l|^2,
\end{equation}
and $\mu\in L^\infty(\Omega)$ with for all $x\in \Omega$,
\begin{equation}\label{eqineqmu}
\mu_{\min} \leq \mu(x) \leq \mu_{\max}\,.
\end{equation}

One purpose of gamblets is to compute the solution of \eqref{eqnscalarzeta} (or its finite-element solution) as fast as possible to a given accuracy.

  \begin{figure}[h!]
	\begin{center}
			\includegraphics[width=0.7\textwidth]{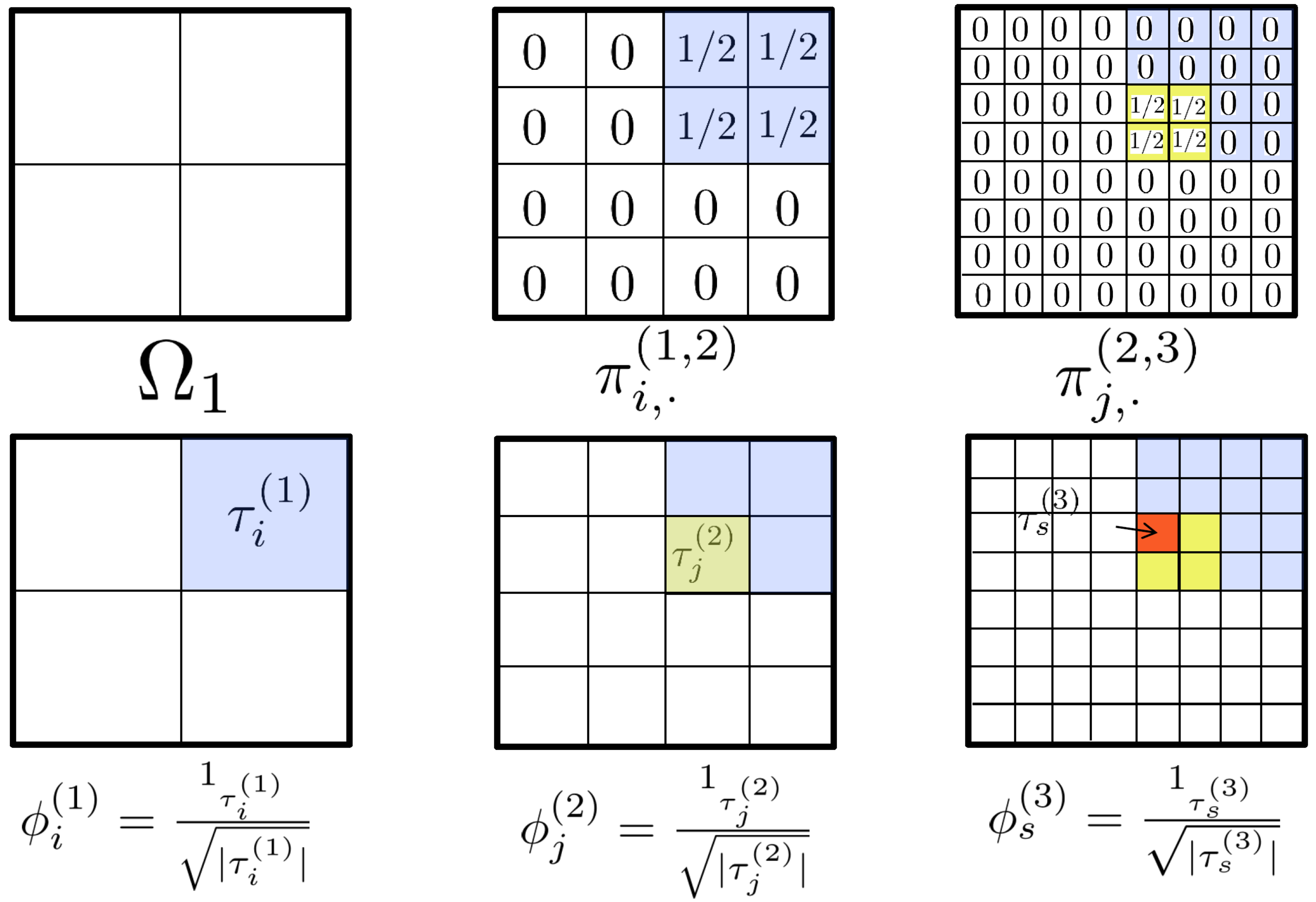}
		\caption{ $\Omega=(0,1)^2$. $\Omega_k$ corresponds to a uniform partition of $\Omega$ into $2^{-k}\times 2^{-k}$ squares. The bottom row shows the support of $\phi_i^{(1)}, \phi_j^{(2)}$ and $\phi_s^{(3)}$. Note that  $j^{(1)}=s^{(1)}=i$ and $s^{(2)}=j$. The top row shows the entries of $\pi^{(1,2)}_{i,\cdot}$ and $\pi^{(2,3)}_{j,\cdot}$.
}\label{figphipi}
	\end{center}
\end{figure}

\subsection{The hierarchy of measurement functions}

We will now introduce a hierarchy of measurement functions that will be used to characterize the process of computing of hierarchies of levels of complexity. We will need the following hierarchy of labels.
\begin{Definition}\label{defindextree}
We say that $\I^{(\q)}$ is an index tree of depth $\q$   if it is the finite set of $\q$-tuples of the form $i=(i_1,\ldots,i_\q)$. For $1\leq k \leq \q$ and $i=(i_1,\ldots,i_\q)\in \I^{(\q)}$,  write
$i^{(k)}:=(i_1,\ldots,i_k)$  and $\I^{(k)}:=\{i^{(k)}\,:\, i\in \I^{(\q)}\}$. For $1<s<k$ and a $k$-tuple of the form $i=(i_1,\ldots,i_k)$ we write $i^{(s)}:=(i_1,\ldots,i_s)$.
\end{Definition}

Write $I^{(k)}$ for the $\I^{(k)}\times \I^{(k)}$ identity matrix.
\begin{Construction}\label{constpiQQ}
For $k\in \{1,\ldots,q-1\}$ let $\pi^{(k,k+1)}$ be a $\I^{(k)}\times \I^{(k+1)}$ matrix such that $\pi^{(k,k+1)}(\pi^{(k,k+1)})^T=I^{(k)}$ and
 $\pi^{(k,k+1)}_{i,j}=0$ for $j^{(k)}\not=i$ (we say that $\pi^{(k,k+1)}$ is cellular).
\end{Construction}

 Let $(\phi_i^{(\q)})_{i\in \I^{(\q)}}$ be orthonormal elements of $L^2(\Omega)$ and, for $k\in \{1,\ldots,\q-1\}$ and $i\in \I^{(k)}$ define $\phi_i^{(k)}$ via induction by
\begin{equation}\label{eqndkjndnfj}
\phi_i^{(k)}=\sum_{j\in \I^{(k+1)}} \pi^{(k,k+1)}_j \phi_j^{(k+1)}
\end{equation}

We will refer to the elements $\phi_i^{(k)}$ as measurement functions. Through this paper we use
Haar wavelets or approximations thereof (Construction \ref{consphi}) as prototypical measurement functions. We refer the reader to
\cite{gamblet17} for a comprehensive description of the framework.

 \begin{Construction}\label{consphi}
Let $H, \delta\in (0,1)$. For $k\in \N^*$, let $\Omega_{k}$ be a nested  partition of $\Omega$ into subsets $(\tau^{(k)}_i)_{i\in \I^{(k)}}$ such that (1) each $\tau_i^{(k)}$ is contained in a ball of radius $H$ and contains a ball of radius $\delta H$ and (2) $|\tau^{(k)}_i|=|\tau^{(k)}_j|$ ($|\tau^{(k)}_i|$ is the volume of $\tau^{(k)}_i$). Let
 $\phi_i^{(k)}=\frac{1_{\tau^{(k)}_i}}{\sqrt{|\tau^{(k)}_i|}}$ where  $1_{\tau^{(k)}_i}$ is the indicator function of $\tau^{(k)}_i$.  Observe that the nesting matrices $\pi^{(k,k+1)}$ are cellular and orthonormal (in the sense that $\pi^{(k,k+1)}(\pi^{(k,k+1)})^T=I^{(k)}$ where $I^{(k)}$ is the $\I^{(k)}\times \I^{(k)}$ identity matrix).
 \end{Construction}

 \begin{Example}
 For our running example we will consider $\Omega=(0,1)^2$ illustrated in Figure \ref{figphipi} (taken from \cite{gamblet17}).
 Using the Construction \ref{consphi} we select
  $\Omega_{k}$ to be a regular grid partition of $\Omega$ into $2^{-k}\times 2^{-k}$ squares $\tau^{(k)}_i$  and  $\phi_i^{(k)}=\frac{1_{\tau^{(k)}_i}}{\sqrt{|\tau^{(k)}_i|}}$.
 \end{Example}

\subsection{The hierarchy of games}
Gamblets are then identified by turning the process of computing with limited resources and partial information as that of playing hierarchies of games defined as follows. We have two players I and II. Player I chooses
the right hand side $g$ of \eqref{eqnscalarzeta} in $H^{-1}(\Omega)$ and does not show it to Player II.
Starting with $k=1$, Player II sees  $(\int_{\Omega} u \phi_i^{(k)})_{i\in \I^{(k)}}$ and must  predict $u$ and $(\int_{\Omega} u \phi_i^{(k+1)})_{i\in \I^{(k+1)}}$. Once Player II has made his choice, he gets a loss, sees $(\int_{\Omega} u \phi_i^{(k+1)})_{i\in \I^{(k+1)}}$ and must predict $u$ and $(\int_{\Omega} u \phi_i^{(k+2)})_{i\in \I^{(k+2)}}$.
In this adversarial game Player I tries to maximize the loss of Player II and Player II tries to minimize it.
Optimal strategies are identified by lifting this deterministic minimax problem to a minimax over measures \cite[Sec.~5]{gamblet17}. In other words, Player I must play at random and Player II must look for an optimal strategy in the Bayesian class of strategies  by considering the SPDE
\begin{equation}\label{eqnscalarzetaspde}
\begin{cases}
    \frac{4}{\z^2}\mu v-\diiv \big(a  \nabla v\big)=\xi \quad  x \in \Omega; \\
    u=0 \quad \text{on}\quad \partial \Omega,
    \end{cases}
\end{equation}
where the right hand side of \eqref{eqnscalarzeta} has been replaced by a random field $\xi$ and the  bet of Player II at step $k$ is the expectation of the solution of the SPDE \eqref{eqnscalarzetaspde} conditioned on measurements of the solution of the deterministic PDE
\eqref{eqnscalarzeta}, i.e.
\begin{equation}\label{eqddjgd}
u^{(k),\z}(x):=\E\big[v(x)\big| \int_{\Omega} v(y)\phi_i^{(k)} (y)\,dy=\int_{\Omega} u(y)\phi_i^{(k)} (y)\,dy,\,i\in \I^{(k)} \big]\,.
\end{equation}
Note that the sequence of approximations \eqref{eqddjgd} is a martingale under the filtration formed by the measurements $(\int_{\Omega} u \phi_i^{(k)})_{i\in \I^{(k)}}$.

\subsection{$\z$-Gamblets}
If the loss of Player II is measured using relative error in the energy norm $\|w\|_\z^2:=\frac{4}{\z^2}\int_{\Omega}  w^2 \mu+\int_{\Omega} (\nabla w)^T a \nabla w$ associated with the operator scalar product
\begin{equation}\label{eqsp}
\<w_1,w_2\>_\z:=\frac{4}{\z^2}\int_{\Omega}  w_1 w_2 \mu+\int_{\Omega} (\nabla w_1)^T a \nabla w_2\,,
\end{equation}
then \cite[Sec.~5]{gamblet17} the optimal strategy of Player II is to select the distribution of $\xi$ as that of a centered Gaussian field with covariance operator $\L=\frac{4}{\z^2}\mu \cdot-\diiv \big(a  \nabla \cdot\big)$. This simply means that if $f\in H^1_0(\Omega)$ then $\int_{\Omega} f \xi$ is a centered Gaussian random variable of variance $\|f\|_\z^2$. Under that choice $u^{(k),\z}$ can be written as a linear combination of the measurements, i.e.
\begin{equation}
u^{(k),\z}(x)=\sum_{i\in \I^{(k)}}\psi_i^{(k),\z}(x)\int_{\Omega} u(y)\phi_i^{(k)}(y)\,dy
\end{equation}
and the coefficients $\psi_i^{(k),\z}$ are deterministic functions and elementary gambles (namely, \textit{$\zeta$-gamblets} or \textit{gamblets}) forming a basis for Player II's strategy ($\psi_i^{(k),\z}(x)$ is the best bet of Player II on the value of $u(x)$ given the information that $\int_{\Omega} u \phi_j^{(k)}=\delta_{i,j}$ for $j\in \I^{(k)}$).
As shown in \cite{OwhadiMultigrid:2015} (see also \cite{Owhadi:2014, gamblet17}),  gamblets are  optimal recovery splines  \cite{micchelli1977survey} characterized by
optimal variational and recovery properties.
\begin{Theorem}
It holds true that (1) for $i\in \I^{(k)}$,
\begin{equation}
\psi_i^{(k),\z}=\sum_{j\in \I^{(k)}}\Theta_{i,j}^{(k),-1} \L^{-1}\phi_j^{(k)}
\end{equation}
where $\Theta^{(k),-1}$ is the inverse of the Gramian matrix $\Theta^{(k)}_{i,j}:=\int_{\Omega}\phi_i^{(k)}\L^{-1}\phi_j^{(k)}$
(2) for $w\in \R^{\I^{(k)}}$, $\sum_{i\in \I^{(k)}} w_i \psi_i^{(k),\z}$ is the minimizer of $\|\psi\|_\z$ over all functions $\psi\in H^1_0(\Omega)$ such that $\int_{\Omega}\psi \phi_i^{(k)}=w_i$ for $i\in \I^{(k)}$  and (3) $u^{(k),\z}$ is the minimizer of $\|u-\psi\|_\z$ over all functions $\psi$ in $\Span\{\L^{-1}\phi_i^{(k)}\mid i\in \I^{(k)}\}$.
\end{Theorem}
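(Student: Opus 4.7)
The plan is to prove statements (1)--(3) by first establishing the variational characterization (2) via Lagrange multipliers, then deriving the explicit formula (1) as the $w = e_i$ specialization, and finally obtaining (3) by identifying the correct $\z$-orthogonality relation. The Gaussian-conditional-expectation viewpoint motivates the construction but the cleanest route to all three claims is through the deterministic variational problem.

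For (2), I would fix $w \in \R^{\I^{(k)}}$ and minimize $\|\psi\|_\z^2$ over the closed affine subspace $\{\psi \in H^1_0(\Omega) : \int_\Omega \psi \phi_i^{(k)}\,dx = w_i,\ i \in \I^{(k)}\}$, which is nonempty since the $\phi_i^{(k)}$ are linearly independent in $L^2(\Omega)$. By \eqref{eqaineq}--\eqref{eqineqmu} the functional $\|\cdot\|_\z^2$ is strictly convex and coercive on $H^1_0(\Omega)$, so a unique minimizer $\psi^\star$ exists. Writing first-order stationarity with multipliers $(\lambda_i)_{i\in\I^{(k)}}$ gives $\langle \psi^\star,\varphi\rangle_\z = \sum_i \lambda_i \int_\Omega \phi_i^{(k)} \varphi\,dx$ for every test $\varphi \in H^1_0(\Omega)$, which is the weak form of $\L\psi^\star = \sum_i \lambda_i \phi_i^{(k)}$, hence $\psi^\star = \sum_i \lambda_i \L^{-1}\phi_i^{(k)}$. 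Substituting into the measurement constraint yields the linear system $w = \Theta^{(k)}\lambda$, so $\lambda = \Theta^{(k),-1} w$, which proves (2).

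For (1), I would observe that the gamblet $\psi_i^{(k),\z}$ is, by its Bayesian definition just above the theorem, the conditional mean of $v$ under the measurement $\int_\Omega v \phi_j^{(k)} = \delta_{i,j}$, $j\in\I^{(k)}$. Since $v = \L^{-1}\xi$ is Gaussian with Cameron--Martin space $(H^1_0(\Omega),\langle\cdot,\cdot\rangle_\z)$, the conditional mean equals the minimum $\|\cdot\|_\z$-norm interpolant of the measurements, i.e.\ the minimizer from (2) applied to $w = e_i$. Substituting $w_j = \delta_{i,j}$ into the formula $\psi^\star = \sum_{i,j} w_i \Theta^{(k),-1}_{i,j} \L^{-1}\phi_j^{(k)}$ then gives (1). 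Alternatively, one may bypass the Gaussian formalism and simply \emph{define} $\psi_i^{(k),\z}$ by the right-hand side of (1), verify via (2) that $\sum_i w_i \psi_i^{(k),\z}$ is the minimum-energy interpolant of any measurement vector $w$, and deduce that $\sum_i (\int_\Omega u \phi_i^{(k)})\psi_i^{(k),\z}$ agrees with the posterior mean \eqref{eqddjgd}.

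For (3), the key identity is $\langle \L^{-1}\phi_i^{(k)},\varphi\rangle_\z = \int_\Omega \phi_i^{(k)}\varphi\,dx$ for every $\varphi\in H^1_0(\Omega)$, immediate from the definition of $\L$ and integration by parts. Consequently the $\z$-orthogonal complement in $H^1_0(\Omega)$ of the finite-dimensional subspace $V^{(k)} := \Span\{\L^{-1}\phi_i^{(k)} : i \in \I^{(k)}\}$ is exactly $\{\varphi : \int_\Omega \phi_i^{(k)}\varphi\,dx = 0 \text{ for all } i\}$. By (1) the function $u^{(k),\z}$ lies in $V^{(k)}$ and interpolates the measurements of $u$, so $u - u^{(k),\z}$ lies in this orthogonal complement; thus $u^{(k),\z}$ is the $\z$-orthogonal projection of $u$ onto $V^{(k)}$ and hence the unique minimizer of $\|u-\psi\|_\z$ over $\psi \in V^{(k)}$. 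The only technical point worth flagging is the invertibility of $\Theta^{(k)}$, which follows because $\Theta^{(k)}_{i,j} = \langle \L^{-1}\phi_i^{(k)},\L^{-1}\phi_j^{(k)}\rangle_\z$ is a Gram matrix of linearly independent elements; this is the same fact that guarantees well-posedness of the Lagrange system in step (2).
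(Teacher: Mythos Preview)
Your argument is correct. The paper does not actually prove this theorem in-text; at the start of Section~\ref{secgamblets} it states that the proofs of all results in that section are similar to those in \cite{OwhadiMultigrid:2015} and \cite{gamblet17} and defers to those references. Your route---solve the constrained quadratic minimization by Lagrange multipliers to obtain $\psi^\star=\sum_j(\Theta^{(k),-1}w)_j\,\L^{-1}\phi_j^{(k)}$, identify this with the Gaussian posterior mean via the standard Cameron--Martin/RKHS equivalence, specialize to $w=e_i$ for (1), and read off (3) from the duality identity $\langle \L^{-1}\phi_i^{(k)},\varphi\rangle_\z=\int_\Omega\phi_i^{(k)}\varphi$---is precisely the standard argument found in those references, so there is no meaningful methodological difference to report.

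One small presentational point: as written, your paragraph for (2) ends with ``which proves (2)'' before you have established that the minimizer equals $\sum_i w_i\psi_i^{(k),\z}$; that identification only becomes available once (1) is in hand (or once you adopt your alternative of \emph{defining} $\psi_i^{(k),\z}$ by the right-hand side of (1)). It would be cleaner to first derive the explicit minimizer formula, then state (1) as the $w=e_i$ case, and only then conclude (2) by linearity. The mathematics is fine either way; this is purely a matter of ordering.
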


 Furthermore  $\psi_i^{(k),\z}$ decays exponentially fast away from the support of $\phi_i^{(k)}$ and this exponential decay can be used to localize the nested computation of gamblets. To simplify the presentation,  we will  write $C$ any constant that depends only on $d, \Omega, \lambda_{\min}(a), \lambda_{\max}(a), \mu_{\min},\mu_{\max}, \delta$ but not on $\z$ nor $H$  (e.g., $2 C \z H^2\lambda_{\max}(a)$ will  be written $C \z H^2$).

 \begin{Theorem}\label{thmodhehiudhehdswws}
Let  $\phi_i^{(k)}$ be as in Construction \ref{consphi}. Let $\Omega_{i,n}^{(k)}$ be the union of subsets $\tau_j^{(k)}$ that are at distance at most $n H$ from $\tau_i^{(k)}$. Let
$\psi_i^{(k),\z,n}$ be the minimizer of $\|\psi\|_\z$ over all functions $\psi\in H^1_0(\Omega_{i,n}^{(k)})$ such that $\int_{\Omega}\psi \phi_j^{(k)}=\delta_{i,j}$ for $j\in \I^{(k)}$. We have $\|\psi_i^{(k),\z}-\psi_i^{(k),\z,n}\|_\z \leq C \|\psi_i^{(k),\z,0}\|_\z e^{- C^{-1}n}$
\end{Theorem}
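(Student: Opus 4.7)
The plan is to combine the variational/Galerkin-orthogonality characterization of $\psi_i^{(k),\z}$ with an iterated Caccioppoli-type decay estimate on annular subdomains. Write $\psi := \psi_i^{(k),\z}$ and $\psi^n := \psi_i^{(k),\z,n}$, and denote by $\F^{(k)}\subset H^1_0(\Omega)$ the subspace of functions with zero $\phi^{(k)}$-measurements. The starting observation is that $\psi$ is characterized by its measurements $\int\psi\phi_j^{(k)}=\delta_{i,j}$ together with the Euler--Lagrange condition $\<\psi,w\>_\z=0$ for every $w\in\F^{(k)}$, while $\psi^n$ satisfies the analogous condition restricted to $\F^{(k)}\cap H^1_0(\Omega_{i,n}^{(k)})$. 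Since the constraint set for $\psi^n$ sits inside the one for $\psi$, the standard Pythagoras identity for minimum-norm elements in nested affine spaces gives
\begin{equation*}
\|\psi-\psi^n\|_\z^2 \;\le\; \inf_{\tilde\psi}\|\psi-\tilde\psi\|_\z^2,
\end{equation*}
where the infimum runs over $\tilde\psi\in H^1_0(\Omega_{i,n}^{(k)})$ with the same measurements as $\psi$. So the whole problem reduces to exhibiting one such $\tilde\psi$ whose energy distance from $\psi$ decays geometrically in $n$.

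The natural choice is $\tilde\psi := \eta\,\psi - c$, where $\eta$ is a Lipschitz cut-off with $\eta=1$ on $\Omega_{i,n-1}^{(k)}$, $\eta=0$ outside $\Omega_{i,n}^{(k)}$, and $\|\nabla\eta\|_\infty\le C/H$, and $c$ is a piecewise-constant correction (supported on the annular cells $\tau_j^{(k)}\subset\Omega_{i,n}^{(k)}\setminus\Omega_{i,n-1}^{(k)}$) chosen so that $\int\tilde\psi\phi_j^{(k)}=\int\psi\phi_j^{(k)}$ for every $j$. The crucial point is that, because the $\phi_j^{(k)}$ are the $L^2$-normalized indicators of Construction~\ref{consphi}, this correction is cell-local and concentrated in a single annular shell. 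A direct energy computation then shows that $\|\psi-\tilde\psi\|_\z^2$ is controlled by the $\z$-energy of $\psi$ on the shell $\Omega_{i,n}^{(k)}\setminus\Omega_{i,n-1}^{(k)}$, with Poincaré on each $\tau_j^{(k)}$ used to absorb the $\frac{4}{\z^2}\mu$ contribution via a factor $C\z H^2$ that is uniform in $\z$.

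The remaining ingredient is one-step geometric decay of the tail energy $E_m := \|\psi\|_{\z,\Omega\setminus\Omega_{i,m}^{(k)}}^2$. Plugging the test function $w=\eta^2\psi-c$ (for a cut-off $\eta$ that vanishes on $\Omega_{i,m}^{(k)}$ and equals $1$ outside $\Omega_{i,m+1}^{(k)}$, with an analogous cell-local measurement correction $c$) into the orthogonality relation $\<\psi,w\>_\z=0$ and rearranging yields, after Cauchy--Schwarz and Poincaré on the annular cells, an estimate of the form
\begin{equation*}
E_{m+1} \;\le\; \theta\,(E_m-E_{m+1}),
\end{equation*}
with $\theta<1$ depending only on $d,\lambda_{\min}(a),\lambda_{\max}(a),\mu_{\min},\mu_{\max},\delta$ (and crucially not on $\z$, because both the gradient and $\frac{4}{\z^2}\mu$ pieces appear symmetrically on both sides). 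This gives $E_m\le(1+\theta^{-1})^{-m}E_0\le C e^{-m/C}\|\psi\|_\z^2$. Combining with Step~1 and the comparison $\|\psi\|_\z\le\|\psi_i^{(k),\z,0}\|_\z$ (Pythagoras applied to the coarsest localization) yields the claimed bound $\|\psi-\psi^n\|_\z\le C\|\psi_i^{(k),\z,0}\|_\z e^{-n/C}$.

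The main obstacle is Step~2: producing test functions that are simultaneously supported in a single annular shell \emph{and} lie in $\F^{(k)}$, so that Galerkin orthogonality can be invoked without leaking mass across many shells. This is made possible by the cellular structure of the nesting matrices $\pi^{(k,k+1)}$ and the indicator form of the $\phi_j^{(k)}$, which let the measurement correction $c$ be chosen independently on each annular cell; a more general choice of measurement functions would require a more delicate ``bubble'' construction to preserve the single-shell support, but would follow the same template.
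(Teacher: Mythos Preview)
The paper does not give a self-contained proof of this theorem; it defers to \cite{OwhadiMultigrid:2015} and \cite{gamblet17}. The argument there is precisely the Caccioppoli-type iteration you outline: Galerkin orthogonality plus Pythagoras reduces the localization error to a best-approximation bound, and plugging an annular cut-off test function into the Euler--Lagrange relation $\<\psi,w\>_\z=0$ yields a one-step contraction of the tail energy that is iterated. So your overall route is the intended one, and your remark that the $\z$-uniformity comes from having both the $\frac{4}{\z^2}\mu$ and the $a\nabla$ pieces on each side of the Caccioppoli inequality is correct.

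There is, however, a genuine slip in your construction of the corrector. You take $c$ to be \emph{piecewise constant} on the annular cells, but a nonzero piecewise-constant function is never in $H^1$ (it jumps across cell interfaces), so neither $\tilde\psi=\eta\psi-c$ nor $w=\eta^2\psi-c$ would be admissible. The fix is exactly the ``bubble'' construction you relegate to the last paragraph, and it is needed already for the indicator measurement functions of Construction~\ref{consphi}, not only for more general $\phi_j^{(k)}$: on each annular cell $\tau_j^{(k)}$ take $c_j\in H^1_0(\tau_j^{(k)})$ with the prescribed mean, for instance $c_j=\big(\int\eta\psi\,\phi_j^{(k)}\big)\psi_j^{(k),\z,0}$. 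The bound $\|c_j\|_\z\le C\|\psi\|_{\z,\tau_j^{(k)}}$, uniformly in $\z$ and $H$, then follows by combining the zero-mean Poincar\'e inequality on $\tau_j^{(k)}$ (valid since $\int_{\tau_j}\psi=0$ for $j\ne i$) with the trivial estimate $\|\psi\|_{L^2(\tau_j)}\le C\z\|\psi\|_{\z,\tau_j}$; one uses whichever of the two is sharper depending on whether $\z\ge H$ or $\z<H$. With this correction the rest of your argument goes through unchanged.
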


\begin{Remark}
The optimal prior is Gaussian because \cite[Sec.~5]{gamblet17} of the linearity of the PDE and the quadratic nature of the loss function. For non linear PDEs or non quadratic loss functions, although optimal priors (which may not be Gaussian) could in principle be numerically approximated, such approximations could be severely impacted by stability issues as discussed in
 \cite{OSS:2013, OwhadiScovel:2013, owhadiBayesiansirev2013, osqr16}.
\end{Remark}

\begin{figure}[H]
\centering
\includegraphics[scale = 0.35]{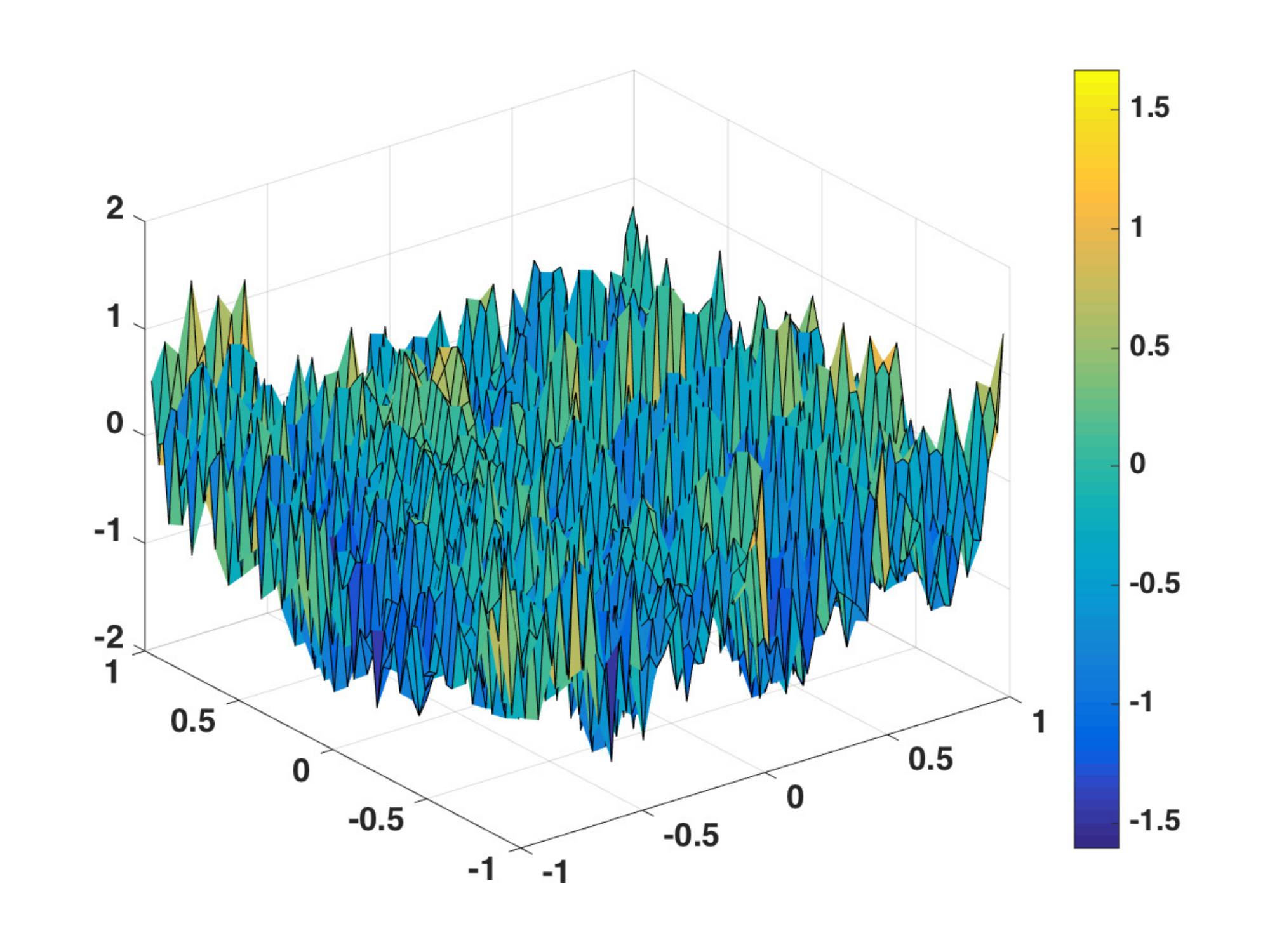}
\caption{$a(x)$ in log scale.}
\label{fig:coef}
\end{figure}

\begin{Example}\label{egadef}
For our numerical examples/illustrations, $d=2$, $\Omega=(0,1)^2$ and $\T_h$ is a square grid of mesh size $h=(1+2^{q})^{-1}$ with $\q=6$ and $64\times 64$ interior nodes, $a$ is piecewise constant on each square of $\T_h$ and given by
\begin{equation}\label{ega}
	a(x) = \Pi_{k=1}^\q \big(1+0.5\cos(2^k\pi(\frac{i}{2^\q+1}+\frac{j}{2^\q+1}))\big)
				\big(1+0.5\sin(2^k\pi(\frac{j}{2^\q+1}-3\frac{i}{2^\q+1}))\big)
\end{equation}
for $\displaystyle x\in[\frac{i}{2^\q+1}, \frac{i+1}{2^\q+1})\times[\frac{j}{2^\q+1}, \frac{j+1}{2^\q+1})$ as illustrated in  Figure \ref{fig:coef}.
We use
continuous bilinear nodal basis elements $\varphi_i$ spanned by $\displaystyle\{1,x_1,x_2,x_1 x_2\}$ in each square of $\T_h$.
Figure \ref{figgambletszeta} then provides an illustration of gamblets for various values of $\zeta$.
Note that the generalized gamblet $\psi_i^{(k),\z}$ can be seen as a non-linear interpolation between a re-scaling of the measurement function $\phi_i^{(k)}$ ($\z=0$) and the gamblets introduced in \cite{OwhadiMultigrid:2015} ($\z=\infty$).
\end{Example}

\begin{figure}[tp]
	\begin{center}
 		\includegraphics[height=6.6cm,width=\textwidth]{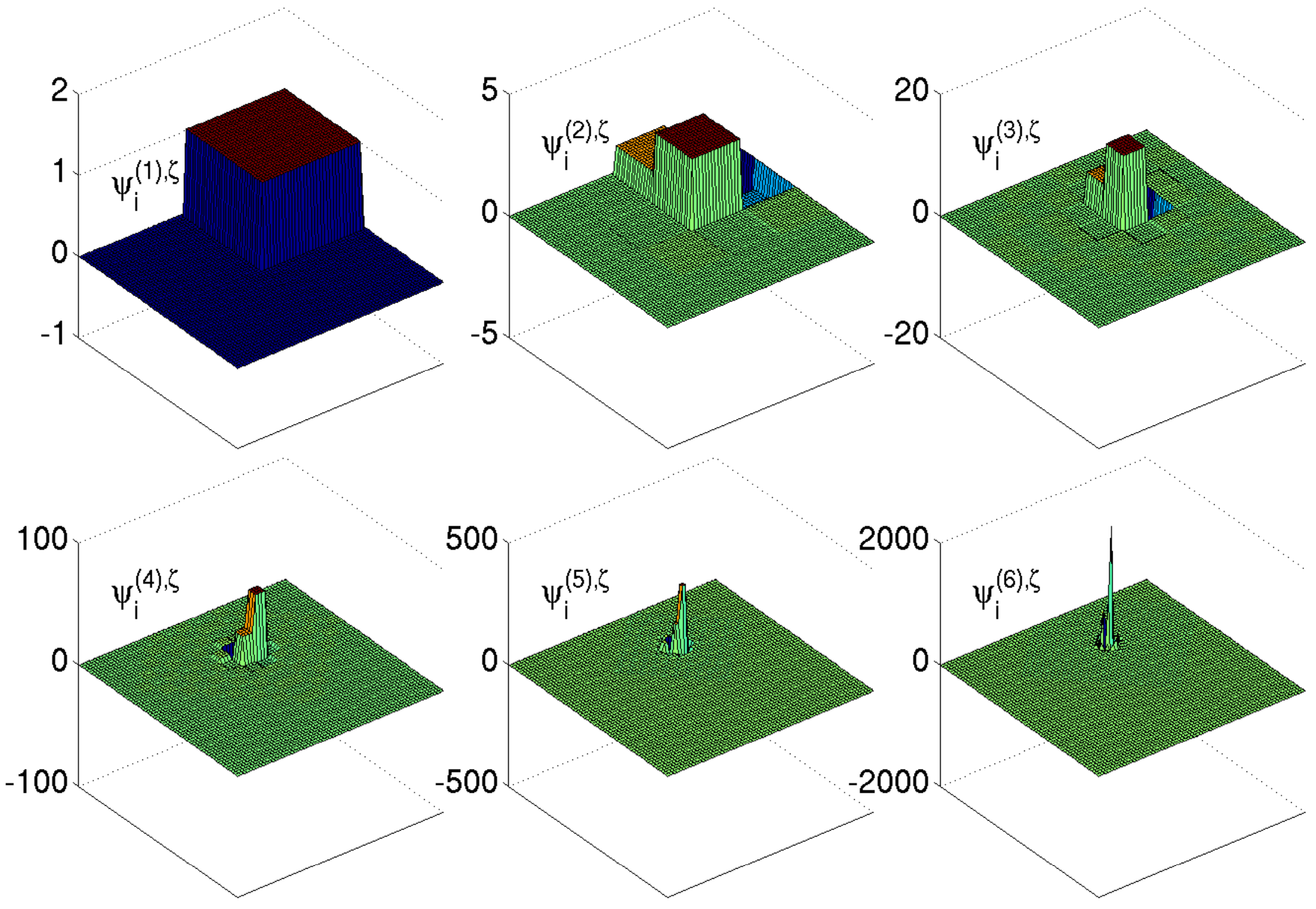} (a)\\
		\includegraphics[height=6.6cm,width=\textwidth]{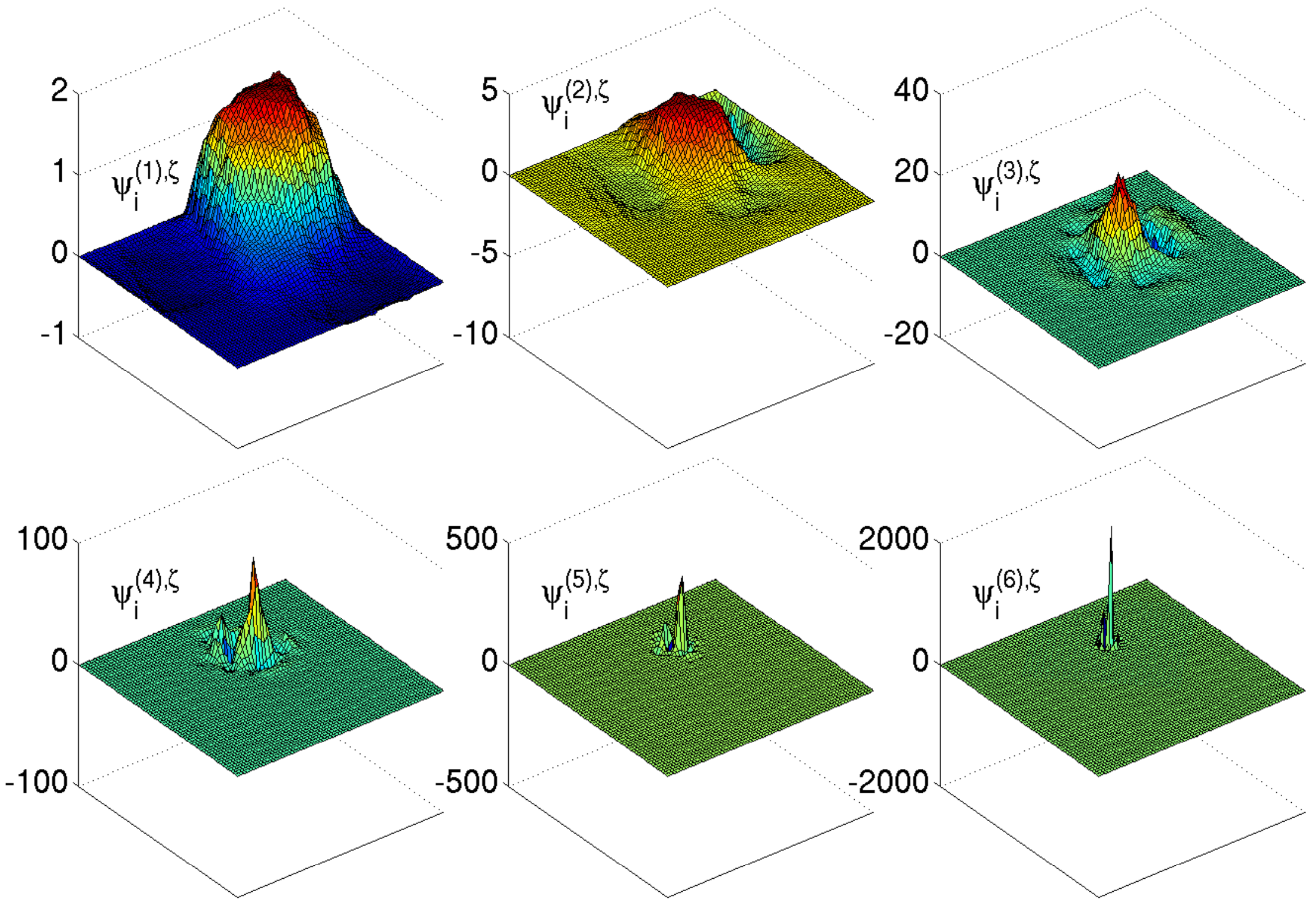} (b)\\
		\includegraphics[height=6.6cm,width=\textwidth]{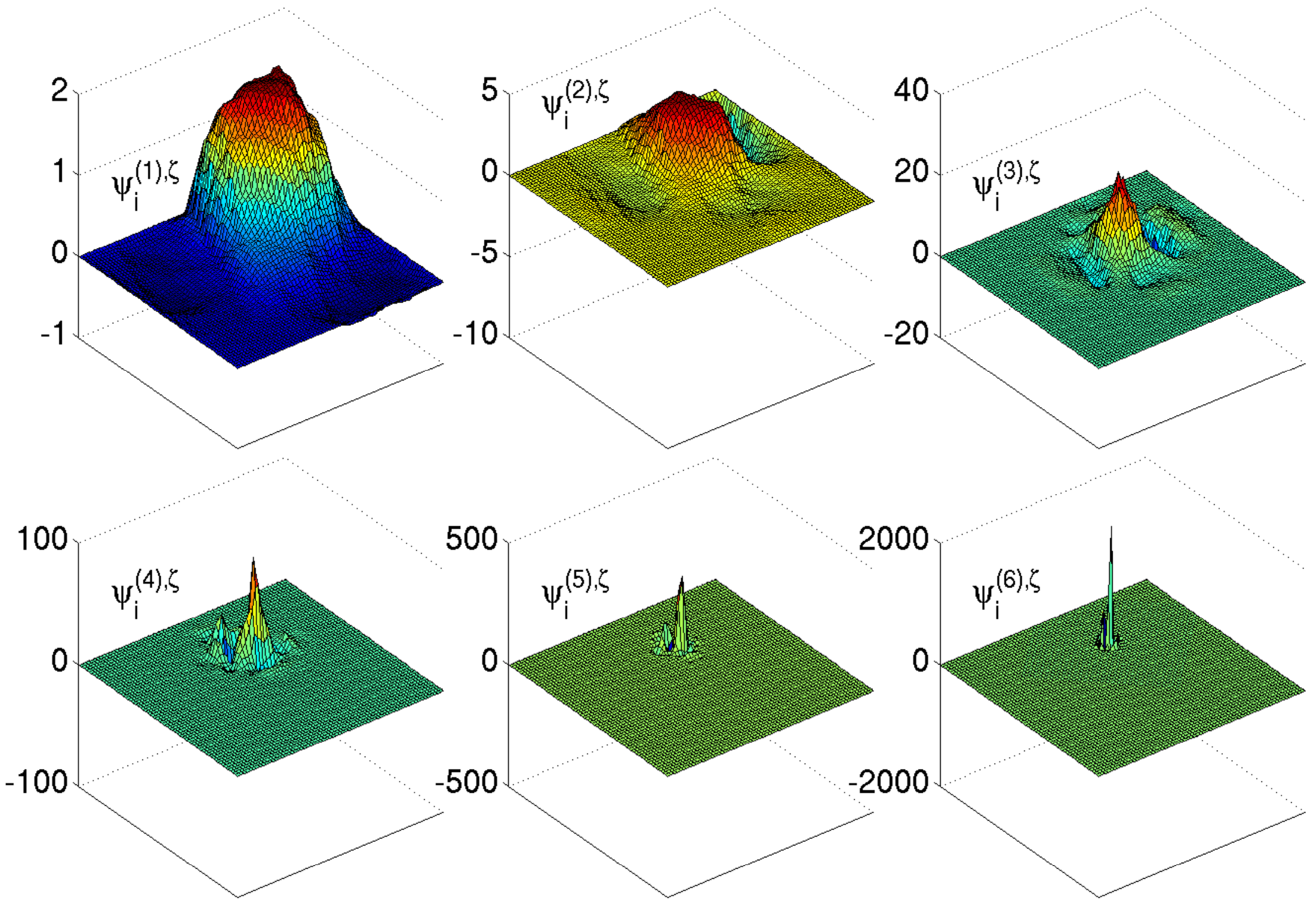} (c)
 		\caption{$\psi_i^{(k), \z}$ for (a) $\z=10^{-6}$ (b) $\z=1$ (c) $\z=10^6$}
 		\label{figgambletszeta}
 	\end{center}
 \end{figure}

\subsection{Multiresolution decomposition}
The nesting of the measurements function implies that of the gamblets, i.e. writing (for  $k\in \{1,\ldots,\q\}$)
$\V^{(k),\z}:=\operatorname{span}\{\psi^{(k),\z}_i \mid i\in \I^{(k)}\}$ we have (for  $k\in \{1,\ldots,\q-1\}$) $\V^{(k),\z}\subset \V^{(k+1),\z}$ and $\psi^{(k),\z}_i(x) =\sum_{j\in  \mathcal{I}_{k+1}} R^{(k),\z}_{i,j}\psi^{(k+1),\z}_j(x) $ where $R^{(k),\z}$ is the so called restriction/prolongation operator whose entry $R^{(k),\z}_{i,j}$ can be identified as,  $R^{(k),\z}_{i,j}=
\mathbb{E}\big[\int_{\Omega} v(y)\phi^{(k+1)}_j(y)\,dy\big| \int_{\Omega} v(y)\phi^{(k)}_l(y)\,dy=\delta_{i,l},\,l\in \mathcal{I}^{(k)} \big]$, i.e. the best bet of Player II on the value of $\int_{\Omega}u \phi_j^{(k+1),\z}$ given the information that $\int_{\Omega}u \phi_l^{(k),\z}=\delta_{i,l}$.
With the identification of the restriction/prolongation operator one can use gamblets to couple scales in a multigrid algorithm but here we will instead use gamblets to induce a multiresolution decomposition of the solution space via orthogonalization process akin to the one used with wavelets.

\begin{Definition}
For $k\in \{2,\ldots,\q\}$ let $\J^{(k)}$ be a finite set of $k$-tuples of the form $j=(j_1,\ldots,j_k)$
such that $\{j^{(k-1)}\mid j\in \J^{(k)}\}=\I^{(k-1)}$ and for $i\in \I^{(k-1)}$, $\operatorname{Card}\{ j\in \J^{(k)}\mid j^{(k-1)}=i\}=\operatorname{Card}\{ s\in \I^{(k)}\mid s^{(k-1)}=i\}-1$.
\end{Definition}

\begin{Definition}\label{defwk}
Let  $W^{(k)}$  be a $\J^{(k)}\times \I^{(k)}$ matrix such that: (1) $\Img(W^{(k),T})=\Ker(\pi^{(k-1,k)})$, (2)  $W^{(k)}W^{(k),T}=J^{(k)}$ where $J^{(k)}$ is the $\J^{(k)}\times \J^{(k)}$ identity matrix, (3)  $W^{(k)}_{j,i}=0$ for $(j,i)\in \J^{(k)}\times \I^{(k)}$ with  $j^{(k-1)}\not=i^{(k-1)}$.
\end{Definition}

When measurement functions are in Construction \ref{consphi} then
 an example of $W^{(k)}$ is provided in Construction \ref{const2} based on the following lemma.

\begin{Lemma}\label{const0}
Let $U^{(n)}$ be the sequence of $n\times n$ matrices defined (1) for $n=2$ by $U^{(2)}_{1,\cdot}=(1,-1)$ and $U^{(2)}_{2,\cdot}=(1,1)$ and (2) iteratively for $n\geq 2$ by $U^{(n+1)}_{i,j}=U^{(n)}_{i,j}$ for $1\leq i,j \leq n$, $U^{(n+1)}_{n+1,j}=1$ for $1\leq j \leq n+1$, $U^{(n+1)}_{i,n+1}=0$ for $1\leq i\leq n-1$ and $U^{(n+1)}_{n,n+1}=-n$. Then for $n\geq 2$, the rows of $U^{(n)}$ are orthogonal, $U^{(n)}_{n,j}=1$ for $1\leq j \leq n$ and we write $\bar{U}^{(n)}$ the corresponding orthonormal matrix obtained by renormalizing the rows of $U^{(n)}$.
\end{Lemma}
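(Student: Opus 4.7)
The plan is to proceed by induction on $n$, simultaneously verifying orthogonality of the rows and the identity $U^{(n)}_{n,j}=1$ for all $1\le j\le n$. These two statements are coupled, so a joint induction is essential: the orthogonality of the last row against the others in $U^{(n+1)}$ will rely on knowing the shape of the $n$-th row of $U^{(n)}$.

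For the base case $n=2$, both claims are immediate: the rows $(1,-1)$ and $(1,1)$ are orthogonal, and the second row equals $(1,1)$.

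For the inductive step, suppose the claims hold for $U^{(n)}$ with $n\ge 2$, and examine $U^{(n+1)}$. By construction, rows $1,\ldots,n-1$ of $U^{(n+1)}$ coincide with the first $n-1$ rows of $U^{(n)}$ padded by a zero in column $n+1$; row $n$ of $U^{(n+1)}$ equals the $n$-th row of $U^{(n)}$, which by the inductive hypothesis is $(1,\ldots,1)$, with $-n$ appended; and row $n+1$ is the all-ones vector of length $n+1$. The claim $U^{(n+1)}_{n+1,j}=1$ therefore holds by definition. For orthogonality, I would argue case by case on the pair $(i,j)$ with $i<j$: pairs with $i<j\le n-1$ inherit orthogonality from $U^{(n)}$ since the appended zeros contribute nothing; for $i\le n-1$ and $j=n$, the extra term $0\cdot(-n)$ vanishes, so the inner product reduces to that of rows $i$ and $n$ of $U^{(n)}$, which is zero by hypothesis; for $i\le n-1$ and $j=n+1$, the inner product is $\sum_{k=1}^n U^{(n)}_{i,k} + 0\cdot 1$, and this equals the inner product of row $i$ of $U^{(n)}$ with row $n$ of $U^{(n)}$ (the all-ones vector), again zero by hypothesis; finally, for $(i,j)=(n,n+1)$ the inner product equals $n\cdot 1+(-n)\cdot 1=0$.

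The only substantive point is recognizing that the value $-n$ in position $(n,n+1)$ is precisely calibrated to cancel the $n$ ones inherited from row $n$ of $U^{(n)}$, and that the hypothesis ``row $n$ of $U^{(n)}$ is all ones'' is exactly what turns the auxiliary reduction in the third case (with $j=n+1$) into orthogonality against a row of $U^{(n)}$. Once this invariant is isolated, the rest is routine bookkeeping, and $\bar U^{(n)}$ is then obtained by normalizing each row by its Euclidean length.
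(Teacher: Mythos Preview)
Your induction argument is correct and complete. The paper itself states this lemma without proof, treating it as an elementary observation, so your proposal supplies exactly the routine verification that the paper omits; the joint induction on orthogonality and the all-ones form of the last row is the natural way to establish it.
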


Note that another possible choice for $U^{(n)}$ (than the one described in Lemma \ref{const0}) is the discrete cosine transformation matrix.

\begin{Construction}\label{const2}
For $k\in \{2,\ldots,\q\}$, let $W^{(k)}$  be a $\J^{(k)}\times \I^{(k)}$ matrix such that: (1) $W^{(k)}_{j,i}=0$ for $(j,i)\in \J^{(k)}\times \I^{(k)}$ with  $j^{(k-1)}\not=i^{(k-1)}$, (2) for
$s\in \I^{(k-1)}$ and $t\in \{1,\ldots,n-1\}$ and $t' \in \{1,\ldots,n\}$, $W^{(k)}_{(s,t),(s,t')}=\bar{U}^{(n)}_{t,t'}$ (where $\bar{U}^{(n)}$ is defined in Lemma \ref{const0} and $n=\operatorname{Card}\{i\in \I^{(k)}\mid i^{(k-1)}=s\}$).
\end{Construction}

For $k\in \{2,\ldots,\q\}$ and $i\in \J^{(k)}$ let
\begin{equation}
\chi_i^{(k),\z}=\sum_{j\in \I^{(k)}}W_{i,j}^{(k)}\psi_j^{(k),\z}
\end{equation}
and
\begin{equation}
\W^{(k),\z}:=\Span\{\chi_i^{(k),\z} \mid i\in \I^{(k)}\}
\end{equation}
For  $k\in \{2,\ldots,\q\}$, write  $\W^{(k),\z}:=\operatorname{span}\{\chi^{(k),\z}_i \mid i\in \J^{(k)}\}$. Write $\oplus_\z$ the orthogonal direct sum with respect to the scalar product $\<\cdot,\cdot\>_\z$. The following theorem shows that $\W^{(k),\z}$ is the orthogonal complement of $\V^{(k),\z}$ in $\V^{(k-1),\z}$ and this induces a multiresolution decomposition of the solution space.
\begin{Theorem}\label{thmgugyug0}
 It holds true that for $k\in \{2,\ldots,\q\}$, $\V^{(k),\z}=\V^{(k-1),\z} \oplus_\z \W^{(k),\z}$ and, in particular
\begin{equation}\label{eqdedhhiuhe3}
\V^{(\q),\z}=\W^{(1),\z}\oplus_\z \W^{(2),\z} \oplus_\z  \cdots \oplus_\z \W^{(\q),\z},
\end{equation}
where $\W^{(1),\z} = \V^{(1),\z}$. Furthermore, $u^{(1)}$ is the finite-element solution of \eqref{eqnscalarzetaspde} in $\V^{(1),\z}$ and for $k\in \{2,\ldots,\q\}$, $u^{(k),\z}-u^{(k-1),\z}$ is the finite element solution of \eqref{eqnscalarzetaspde} in $\W^{(k),\z}$.
\end{Theorem}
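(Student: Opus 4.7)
My plan is to (i) establish $\z$-orthogonality of $\W^{(k),\z}$ and $\V^{(k-1),\z}$ inside $\V^{(k),\z}$; (ii) match dimensions to upgrade this to a direct-sum decomposition and iterate; and (iii) translate the decomposition to the finite-element solutions via the variational identification of $u^{(k),\z}$.

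For (i), the central calculation is $\langle \chi_i^{(k),\z}, \psi_m^{(k-1),\z}\rangle_\z$. I will use part (1) of the previous theorem, which yields $\L\psi_m^{(k-1),\z} = \sum_n \Theta_{m,n}^{(k-1),-1}\phi_n^{(k-1)}$, so that
\begin{equation*}
\langle \chi_i^{(k),\z}, \psi_m^{(k-1),\z}\rangle_\z \;=\; \sum_n \Theta^{(k-1),-1}_{m,n}\int_\Omega \phi_n^{(k-1)}\,\chi_i^{(k),\z}.
\end{equation*}
Unwrapping $\chi_i^{(k),\z}=\sum_j W^{(k)}_{i,j}\psi_j^{(k),\z}$, expanding $\phi_n^{(k-1)}=\sum_p \pi^{(k-1,k)}_{n,p}\phi_p^{(k)}$ via \eqref{eqndkjndnfj}, and using the interpolation identity $\int_\Omega \phi_p^{(k)}\psi_j^{(k),\z}=\delta_{p,j}$ (part (2) of the previous theorem) reduces the $L^2$-pairing $\int_\Omega \phi_n^{(k-1)}\chi_i^{(k),\z}$ to $(\pi^{(k-1,k)} W^{(k),T})_{n,i}$, which vanishes by Definition \ref{defwk} since $\Img(W^{(k),T})\subset \Ker(\pi^{(k-1,k)})$. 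Hence $\W^{(k),\z}$ is $\z$-orthogonal to $\V^{(k-1),\z}$.

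For (ii), the definition of $\J^{(k)}$ yields $\operatorname{Card}\J^{(k)} = \operatorname{Card}\I^{(k)}-\operatorname{Card}\I^{(k-1)}$ (summing the cardinality condition over the prefixes $i\in \I^{(k-1)}$). The identity $W^{(k)}W^{(k),T}=J^{(k)}$ forces the rows of $W^{(k)}$ to be linearly independent, and since $\{\psi_j^{(k),\z}\}_{j\in \I^{(k)}}$ is a basis of $\V^{(k),\z}$ (it is dual to $\{\phi_j^{(k)}\}$ in $L^2$), the family $\{\chi_i^{(k),\z}\}_{i\in \J^{(k)}}$ is linearly independent, so $\dim\W^{(k),\z}=\operatorname{Card}\J^{(k)}$. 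Combined with the inclusion $\V^{(k-1),\z}+\W^{(k),\z}\subset \V^{(k),\z}$ and the trivial intersection forced by the orthogonality in (i), dimensions match and $\V^{(k),\z} = \V^{(k-1),\z}\oplus_\z \W^{(k),\z}$. Iterating for $k=\q,\q-1,\dots,2$ and setting $\W^{(1),\z}:=\V^{(1),\z}$ gives \eqref{eqdedhhiuhe3}.

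For (iii), observe that $\Span\{\L^{-1}\phi_i^{(k)}\mid i\in \I^{(k)}\}$ coincides with $\V^{(k),\z}$, the two generating families being related by the invertible matrix $\Theta^{(k),-1}$. Part (3) of the previous theorem therefore identifies $u^{(k),\z}$ with the $\langle\cdot,\cdot\rangle_\z$-orthogonal projection of $u$ onto $\V^{(k),\z}$, which is precisely the Galerkin/finite-element solution in $\V^{(k),\z}$. Specializing to $k=1$ gives the first claim; applying it at $k$ and $k-1$, together with the orthogonal decomposition in (ii), Pythagoras yields that $u^{(k),\z}-u^{(k-1),\z}$ is the $\z$-orthogonal projection of $u$ onto $\W^{(k),\z}$, hence the finite-element solution in $\W^{(k),\z}$. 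The only delicate step is the orthogonality computation in (i), where one has to chain the nesting matrix $\pi^{(k-1,k)}$, the gambleting matrix $W^{(k)}$, and the Kronecker-delta interpolation property of the gamblets in the correct order; once the identity $\int_\Omega \phi_n^{(k-1)}\chi_i^{(k),\z} = (\pi^{(k-1,k)} W^{(k),T})_{n,i}$ is in hand, the dimension count and the Galerkin interpretation are standard.
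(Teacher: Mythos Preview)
Your argument is correct and follows the same route as the one the paper defers to \cite{OwhadiMultigrid:2015, gamblet17}: the orthogonality $\langle\chi_i^{(k),\z},\psi_m^{(k-1),\z}\rangle_\z=0$ is reduced, via $\L\psi_m^{(k-1),\z}\in\Span\{\phi_n^{(k-1)}\}$ and the duality $\int_\Omega\phi_p^{(k)}\psi_j^{(k),\z}=\delta_{p,j}$, to the matrix identity $\pi^{(k-1,k)}W^{(k),T}=0$; the dimension count $|\J^{(k)}|=|\I^{(k)}|-|\I^{(k-1)}|$ then upgrades orthogonality to the direct sum; and the Galerkin identification of $u^{(k),\z}$ as the $\langle\cdot,\cdot\rangle_\z$-projection onto $\V^{(k),\z}$ gives the last claim. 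The paper does not write out a proof in the present text, so there is nothing further to compare.
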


Note that since the spaces $\W^{(k),\z}$ for $k\in\{1,\ldots,\q\}$ are orthogonal with each other, the corresponding finite-element subband solutions $u^{(1),\z}$ and $u^{(k),\z}-u^{(k-1),\z}$ for $k\in\{2,\ldots,\q\}$ can be computed independently. Figure \ref{figmultiresu} provides an illustration of the subband solutions for the solution $u(x)$ of equation \eqref{eqnscalarzeta} with $g(x)=\sin(\pi x_1)\cos(\pi x_2)$.

\begin{figure}[H]
\centering
\includegraphics [width=6cm]{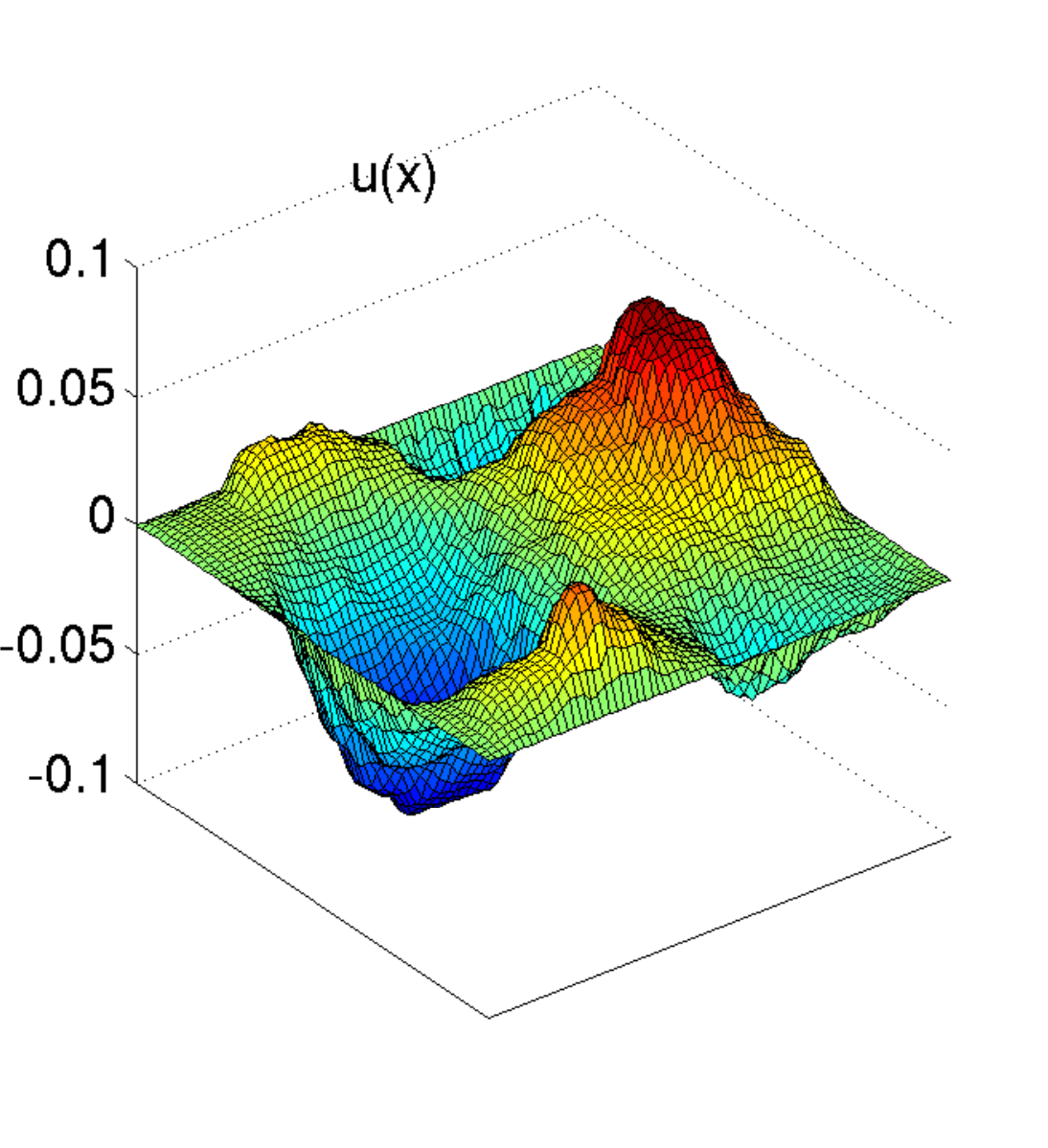}
\caption{Solution $u(x)$ of equation \eqref{eqnscalarzeta} with $g(x)=\sin(\pi x_1)\cos(\pi x_2)$.}
\label{fig:ucoef}
\end{figure}

\begin{figure}[tp]
	\begin{center}
 		\includegraphics[height=6.6cm,width=\textwidth]{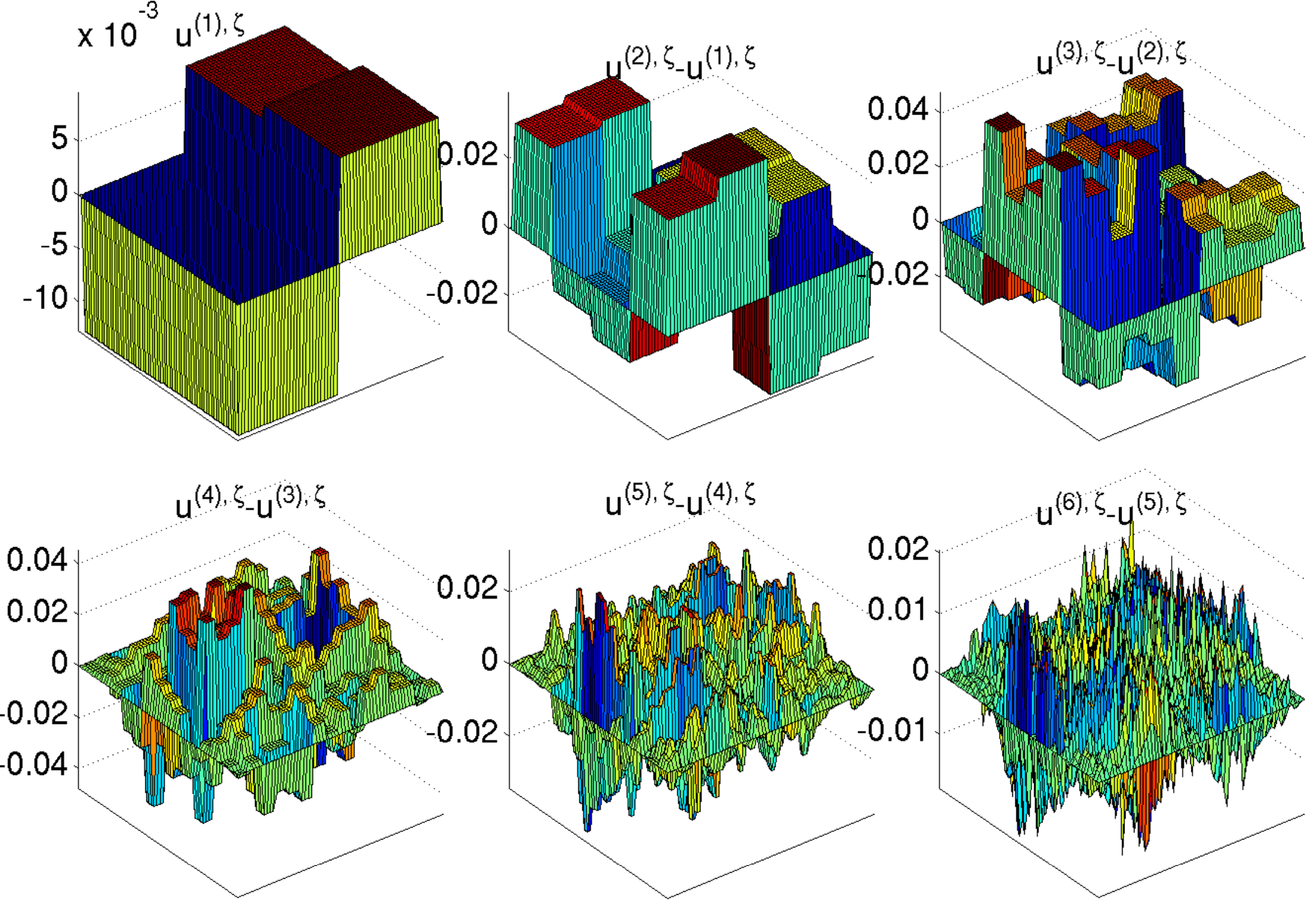} (a)\\
		\includegraphics[height=6.6cm,width=\textwidth]{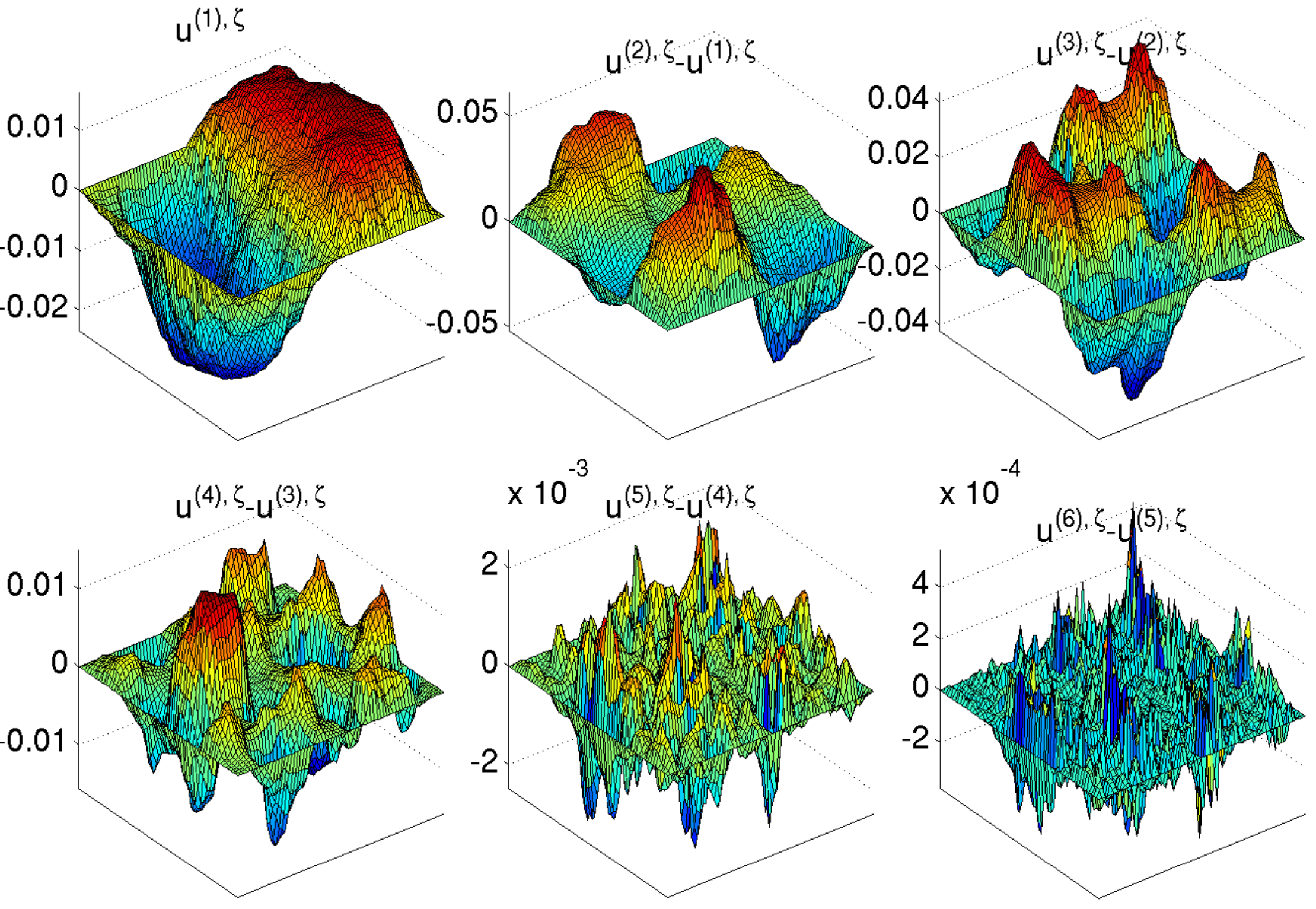} (b)\\
		\includegraphics[height=6.6cm,width=\textwidth]{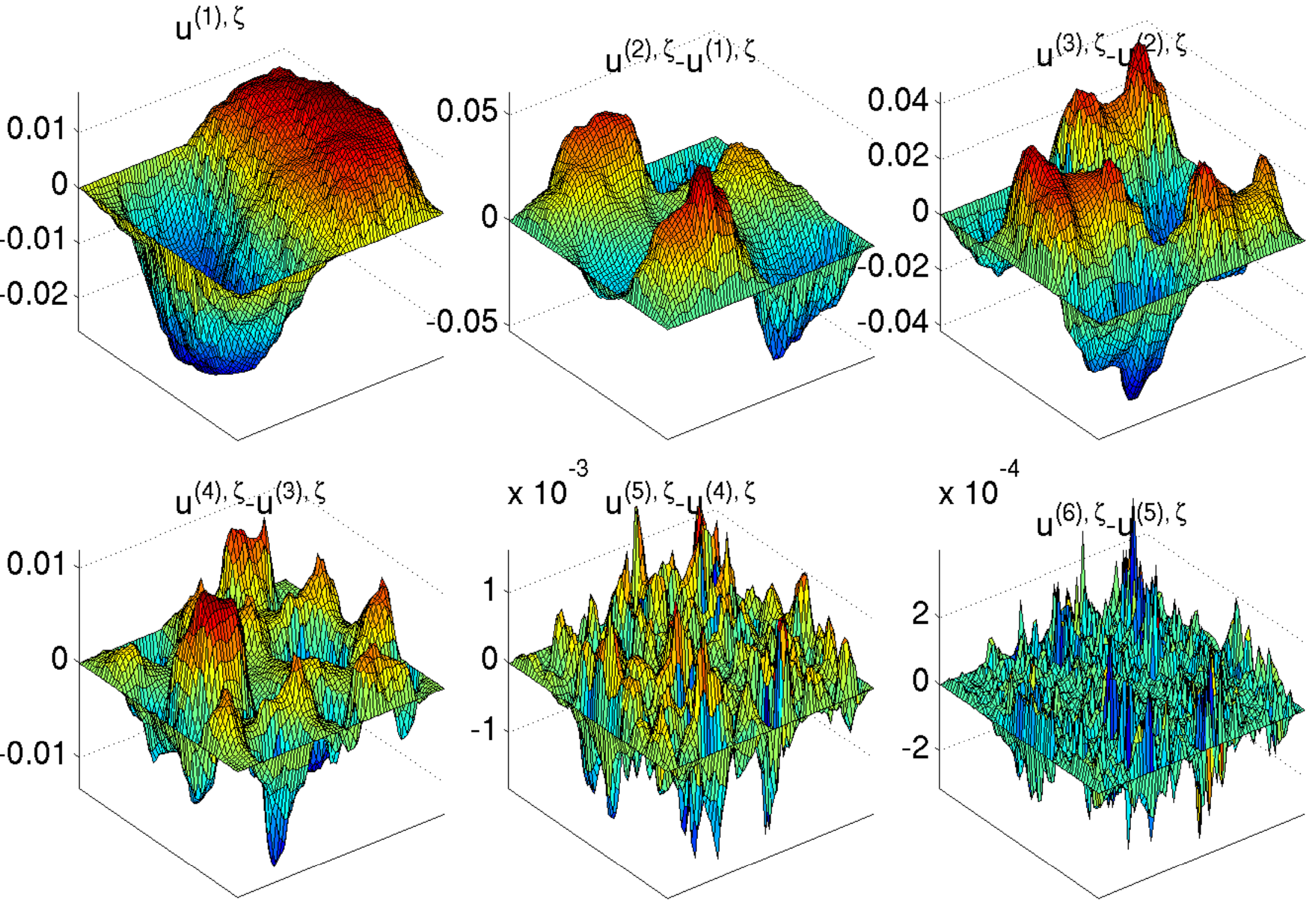} (c)
		\caption{Multiresolution decomposition of $u$ for $\z=10^{-6}$, $\z=1$ and $\z=10^6$}
		\label{figmultiresu}
	\end{center}
\end{figure}

\begin{Remark}
An analogy could be made between gamblets and
Lax Pairs \cite{lax1968integrals} where the solution space is also decomposed
in a way that involves the dynamics of the PDE itself. Here the pairs $\phi_i^{(k)}$ and $\phi_i^{(k),\z}$ form a biorthogonal system \cite{dieudonne1953biorthogonal} in the sense that $\int_{\Omega}\phi_i^{(k)}\psi_j^{(k),\z}=\delta_{i,j}$ for $i,j\in \I^{(k)}$ and the $\<\cdot,\cdot\>_\z$-orthogonal projection of $u\in H^1_0(\Omega)$ onto $\V^{(k),\z}$ is $\sum_{i\in \I^{(k)}} \psi_i^{(k)} \int_{\Omega}\phi_i^{(k)}u $. As discussed in \cite{gamblet17} gamblets are also optimal recovery splines in the sense of Micchelli and Rivlin
 \cite{micchelli1977survey} and as a consequence have optimal recovery properties  \cite{gamblet17}. 
\end{Remark} 

\subsection{Uniformly bounded condition numbers}
Let $A^{(k),\z}$ and $B^{(k),\z}$ be the stiffness matrices of finite element approximation of the \eqref{eqnscalarzetaspde} in $\V^{(k),\z}$ and $\W^{(k),\z}$, i.e.  $A^{(k),\z}_{i,j}:=\<\psi_i^{(k)},\psi_j^{(k)}\>_\z$ for $k\in \{1,\ldots,\q\}$ and $i,j\in \I^{(k)}$ and $B^{(k),\z}_{i,j}:=\<\chi_i^{(k),\z},\chi_j^{(k),\z}\>_\z$ for  $k\in \{2,\ldots,\q\}$ and $i,j\in \J^{(k)}$.
The following theorem shows that the multiresolution decomposition of Theorem \eqref{thmgugyug0} looks like an eigenspace decomposition in the sense that those subspaces are orthogonal with respect to the scalar product $\<\cdot,\cdot\>$ and the condition numbers of the matrices $B^{(k),\z}$ are uniformly bounded (the subspaces are not orthogonal in $L^2(\Omega)$ so \eqref{eqdedhhiuhe3} is not an exact eigenspace decomposition).
For a given matrix $M$, write $\operatorname{Cond}(M):=\sqrt{\lambda_{\max}(M^T M)}/\sqrt{\lambda_{\min}(M^T M)}$ its  condition number.
If $M$ is symmetric write $\lambda_{\min}(M)$ and $\lambda_{\max}(M)$ its minimal and maximal eigenvalues.

\begin{Theorem}\label{thmodhehiudhehd}
Let the $\phi_i^{(k)}$ be as in Construction \ref{consphi}.
For $\z\in (0,\infty]$, $\operatorname{Cond}(A^{(1),\z})\leq C H^{-2}$, and
 $\operatorname{Cond}(B^{(k),\z})\leq C H^{-2}$ for $k\in \{2,\ldots,\q\}$. Furthermore, for $\z=\infty$ and $k\in \{1,\ldots,\q\}$,
 $\displaystyle \frac{1}{C}  \leq \lambda_{\min}(A^{(k),\infty})$  and $\lambda_{\max}(A^{(k),\infty})\leq C H^{-2k}$. For $\z=\infty$ and $k\in \{2,\ldots,\q\}$,
 $\displaystyle \frac{1}{C} H^{-2 (k-1) }   \leq \lambda_{\min}(B^{(k),\infty})$  and $\lambda_{\max}(B^{(k),\infty})\leq  C H^{-2k}$.
\end{Theorem}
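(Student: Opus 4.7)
The plan is to translate each eigenvalue claim into a constrained energy-minimization problem via the variational/optimal-recovery characterization of gamblets stated just above, and to control the resulting minima by Poincaré inequalities tuned to the appropriate length scale. Explicitly, for $c\in\R^{\I^{(k)}}$,
\[
c^{T}A^{(k),\z}c \;=\;\min\!\Bigl\{\|\psi\|_{\z}^{2}\;:\;\psi\in H^{1}_0(\Omega),\ \textstyle\int_{\Omega}\psi\phi_{i}^{(k)}=c_{i}\Bigr\}.
\]
Setting $\tilde w=W^{(k),T}w\in\R^{\I^{(k)}}$, the identities $W^{(k)}W^{(k),T}=J^{(k)}$ and $\pi^{(k-1,k)}W^{(k),T}=0$ (Definition~\ref{defwk}) together with the biorthogonality $\int\psi_j^{(k),\z}\phi_l^{(k-1)}=\pi^{(k-1,k)}_{l,j}$ yield $w^{T}B^{(k),\z}w=\tilde w^{T}A^{(k),\z}\tilde w$, $\|\tilde w\|_{\ell^2}=\|w\|_{\ell^2}$, and force the minimizer of the $A^{(k),\z}$-problem with boundary data $\tilde w$ to satisfy the extra vanishing $\int\psi\phi_l^{(k-1)}=0$ for every $l\in\I^{(k-1)}$.

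For the lower eigenvalue bounds I will use $\|c\|_{\ell^2}^{2}\le\|\psi\|_{L^2}^{2}$ (orthonormality of $\phi_i^{(k)}$). For $A^{(k),\infty}$ the global Poincaré inequality on $\Omega$ and uniform ellipticity of $a$ give $\|\psi\|_{L^2}^{2}\le C\|\psi\|_{\infty}^{2}$, hence $\lambda_{\min}(A^{(k),\infty})\ge C^{-1}$. For $B^{(k),\z}$ the vanishing of $\int_{\tau_l^{(k-1)}}\psi$ unlocks the sharper cellwise Poincaré estimate $\|\psi\|_{L^2(\tau_l^{(k-1)})}^{2}\le CH^{2(k-1)}\|\nabla\psi\|_{L^2(\tau_l^{(k-1)})}^{2}$ (since $\operatorname{diam}\tau_l^{(k-1)}\lesssim H^{k-1}$); summing over $l$, using ellipticity of $a$ and discarding the nonnegative $\mu$-term gives $\lambda_{\min}(B^{(k),\z})\ge C^{-1}H^{-2(k-1)}$ uniformly in $\z$. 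Complementing this with the trivial $\|\psi\|_{\z}^{2}\ge\tfrac{4\mu_{\min}}{\z^{2}}\|\psi\|_{L^2}^{2}$ produces the finer $\lambda_{\min}(B^{(k),\z})\ge C^{-1}(\z^{-2}+H^{-2(k-1)})$, which I will need for the condition-number claim at general $\z$.

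For the upper eigenvalue bounds I construct, for each target, a trial $\psi$ built as a sum of piecewise-bilinear bumps supported in a neighborhood of each cell $\tau_i^{(k)}$ and tuned so that $\int\psi\phi_i^{(k)}$ reproduces the prescribed value; quasi-uniformity of $\Omega_k$ (Construction~\ref{consphi}) together with standard scaling at cell size $H^{k}$ gives $\|\psi\|_{L^2}^{2}\le C\|c\|_{\ell^2}^{2}$ and $\|\nabla\psi\|_{L^2}^{2}\le CH^{-2k}\|c\|_{\ell^2}^{2}$, so $\|\psi\|_{\z}^{2}\le C(\z^{-2}+H^{-2k})\|c\|_{\ell^2}^{2}$. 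Because the variational characterization of $B^{(k),\z}$ in terms of $\tilde w$ requires only the single constraint $\int\psi\phi_i^{(k)}=\tilde w_i$, the same trial function applies, yielding $\lambda_{\max}(A^{(k),\infty})\le CH^{-2k}$ and $\lambda_{\max}(B^{(k),\z})\le C(\z^{-2}+H^{-2k})$.

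Combining these bounds, the $\z=\infty$ assertions follow directly, while for general $\z\in(0,\infty]$ a short algebraic check using $H\le1$ gives
\[
\operatorname{Cond}(B^{(k),\z})\;\le\;C\,\frac{\z^{-2}+H^{-2k}}{\z^{-2}+H^{-2(k-1)}}\;\le\;CH^{-2},
\]
and the parallel argument at $k=1$ (with the vanishing constraint on level-$0$ cells void) gives $\operatorname{Cond}(A^{(1),\z})\le CH^{-2}$. The main obstacle is the trial-function construction in the third paragraph: producing piecewise-bilinear bumps that exactly reproduce arbitrary prescribed measurements while respecting the $H^1_0$ boundary condition and giving energy estimates that are uniform in $k$ leans on the quasi-uniformity of $\Omega_k$ and amounts to a careful but routine piece of finite-element bookkeeping.
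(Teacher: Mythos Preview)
Your approach is correct and is essentially the one the paper has in mind: the paper does not give a self-contained proof here but defers to \cite{OwhadiMultigrid:2015} and \cite{gamblet17}, where the argument is precisely the variational/optimal-recovery characterization of the gamblets combined with Poincar\'e inequalities at the relevant scale (global for $A^{(k),\infty}$, cellwise with vanishing mean for $B^{(k),\z}$) for the lower eigenvalue bounds, and an explicit low-energy trial function for the upper bounds. Your observation that $w^TB^{(k),\z}w=\tilde w^TA^{(k),\z}\tilde w$ with $\tilde w\in\Ker(\pi^{(k-1,k)})$ automatically enforces $\int_{\tau_l^{(k-1)}}\psi=0$ is exactly the mechanism used in the references, and your final algebraic step bounding $(\z^{-2}+H^{-2k})/(\z^{-2}+H^{-2(k-1)})\le H^{-2}$ is the right way to pass from the $\z=\infty$ eigenvalue bounds to the $\z$-uniform condition-number statement.

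Two minor points worth tightening if you write this out in full. First, the cellwise Poincar\'e constant $CH^{2(k-1)}$ on $\tau_l^{(k-1)}$ requires more than the inner/outer ball condition of Construction~\ref{consphi}; one also needs enough shape regularity (e.g.\ a uniform John condition or convexity) to get a Poincar\'e constant depending only on diameter---in the references this is implicit in the choice of partition, and for the running square example it is immediate. Second, for the trial-function upper bound you should take the bumps supported strictly inside the level-$k$ cells so that they do not overlap and so that the $H^1_0(\Omega)$ boundary condition is automatic; the inner-ball condition $\tau_i^{(k)}\supset B(\cdot,\delta H^k)$ gives you enough room to do this with the stated scaling. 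Both are, as you say, routine bookkeeping.
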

\begin{figure}[H]
\centering
\includegraphics [width=6cm]{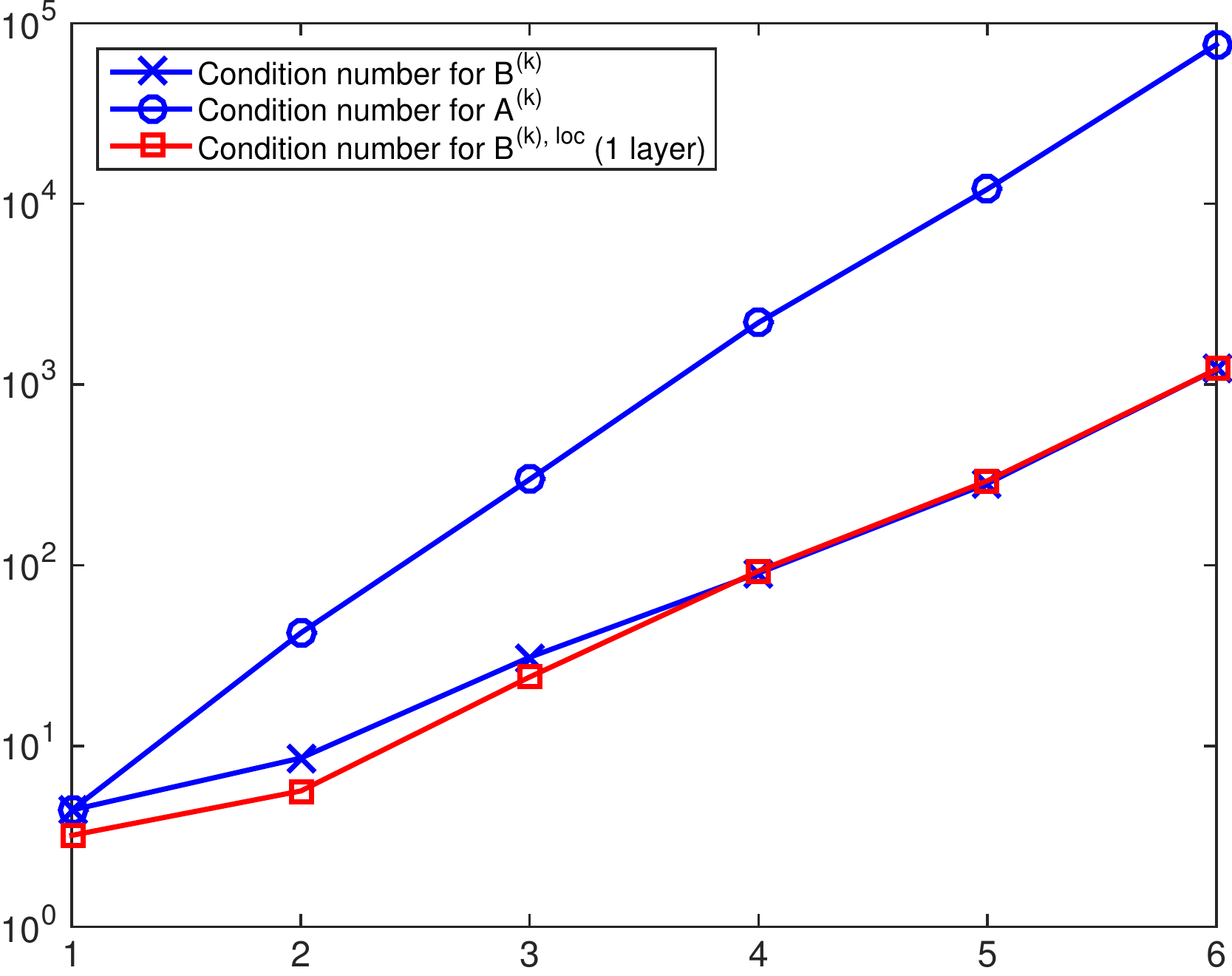}
\caption{Condition numbers of $B^{(k)}$ ($\zeta=\infty$) for $k=1,\ldots,6$ and $a(x)$ defined as in \eqref{ega}. }
\label{fig:condnum}
\end{figure}

\begin{figure}[H]
\centering
\includegraphics [width=6cm]{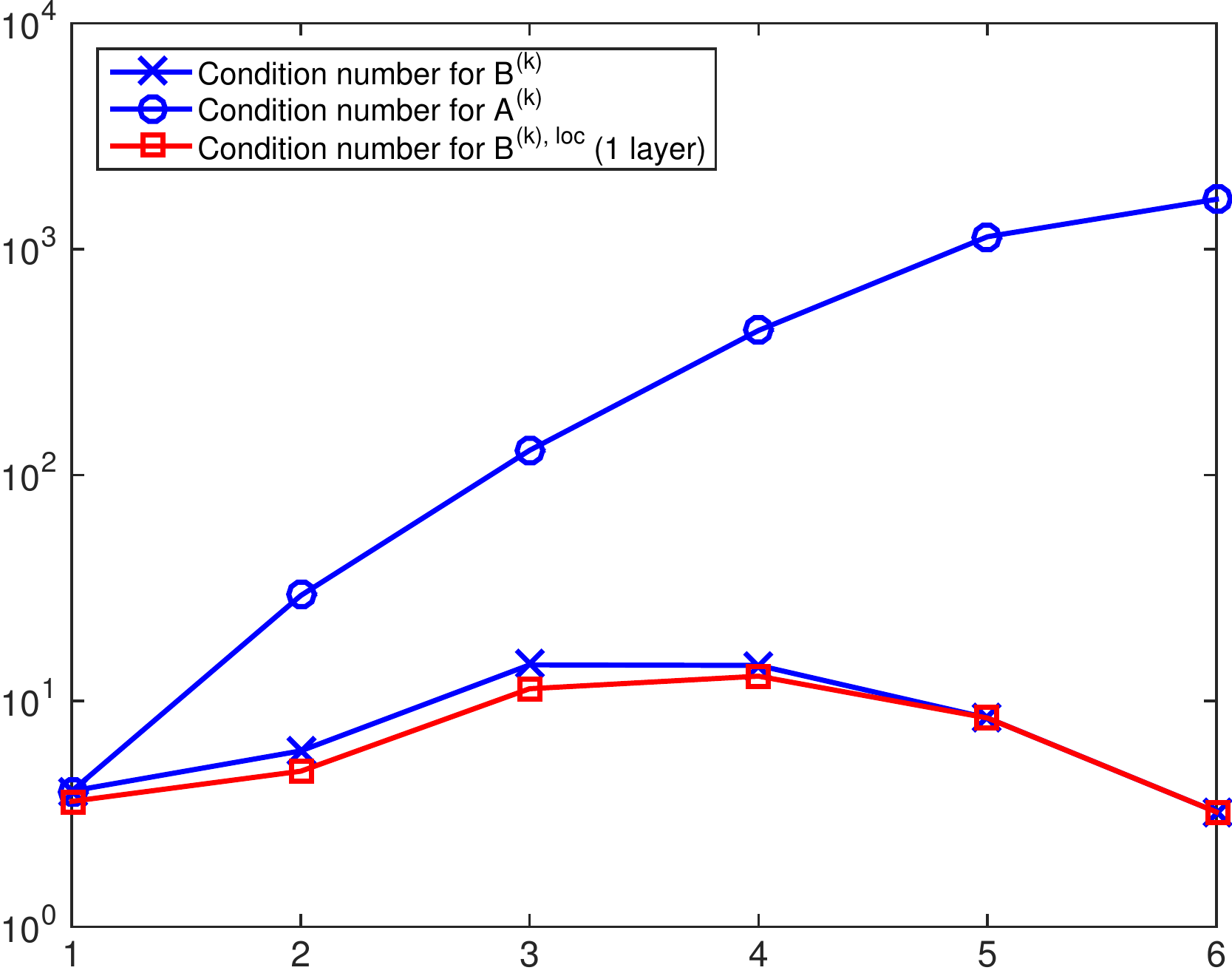}
\caption{Condition numbers of $B^{(k)}$ ($\zeta=\infty$) for $k=1,\ldots,6$ and $a(x)=I_d$ (the Laplacian). }
\label{fig:condnum_Lap}
\end{figure}

\begin{Example}
To simplify notations we will write $A^{(1),\z}$ as $B^{(1),\z}$ and omit the superscript $\z$ when $\z=\infty$. Figures \ref{fig:condnum} and \ref{fig:condnum_Lap} provide the condition numbers of $A^{(k)}$ and $B^{(k)}$ for $k=1,\ldots,6$ for $a(x)$ defined as in \eqref{ega} and $a(x)=I_d$ (the Laplacian). Note that these condition numbers do depend on the contrast of $a$.
Figures \ref{fig:eigs} and \ref{fig:eigs_Lap} illustrate the ranges of the eigenvalues of the PDE in $\V$ and in each subband $\W^{(k)}$, for $a(x)$ defined as in \eqref{ega} and $a(x)=I_d$ (the Laplacian), i.e. the figures are illustrations of the intervals $\big[\inf_{\psi \in \V} \frac{\|\psi\|_a}{\|\psi\|_{L^2(\Omega)}},\sup_{\psi \in \V} \frac{\|\psi\|_a}{\|\psi\|_{L^2(\Omega)}} \big]$
and $\big[\inf_{\psi \in \W^{(k)}} \frac{\|\psi\|_a}{\|\psi\|_{L^2(\Omega)}},\sup_{\psi \in \W^{(k)}} \frac{\|\psi\|_a}{\|\psi\|_{L^2(\Omega)}} \big]$ where we write $\|\psi\|_a:=\|\psi\|_\z$ for $\z=\infty$. Note that the eigenvalues of $B^{(k)}$ cover only subintervals of spectrum of the discretized operator, which corresponds to a multi-resolution decomposition.
\end{Example}

\begin{figure}[H]
\centering
\includegraphics [width=6cm]{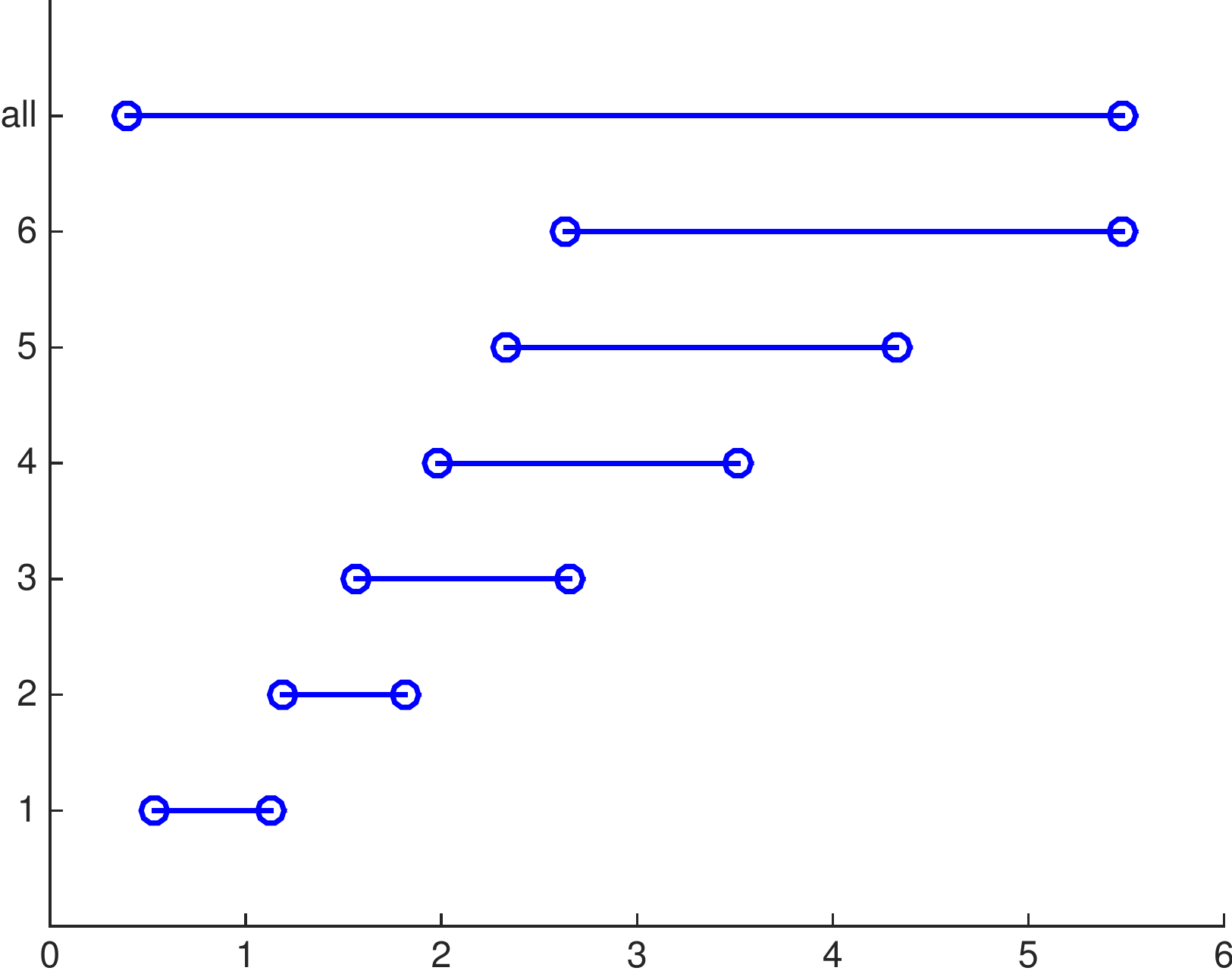}
\caption{Ranges of eigenvalues in $\V$ and $\W^{(k)}$ ($\zeta=\infty$) for $k=1,\ldots,6$ and $a(x)$ defined as in \eqref{ega}.}
\label{fig:eigs}
\end{figure}

\begin{figure}[H]
\centering
\includegraphics [width=6cm]{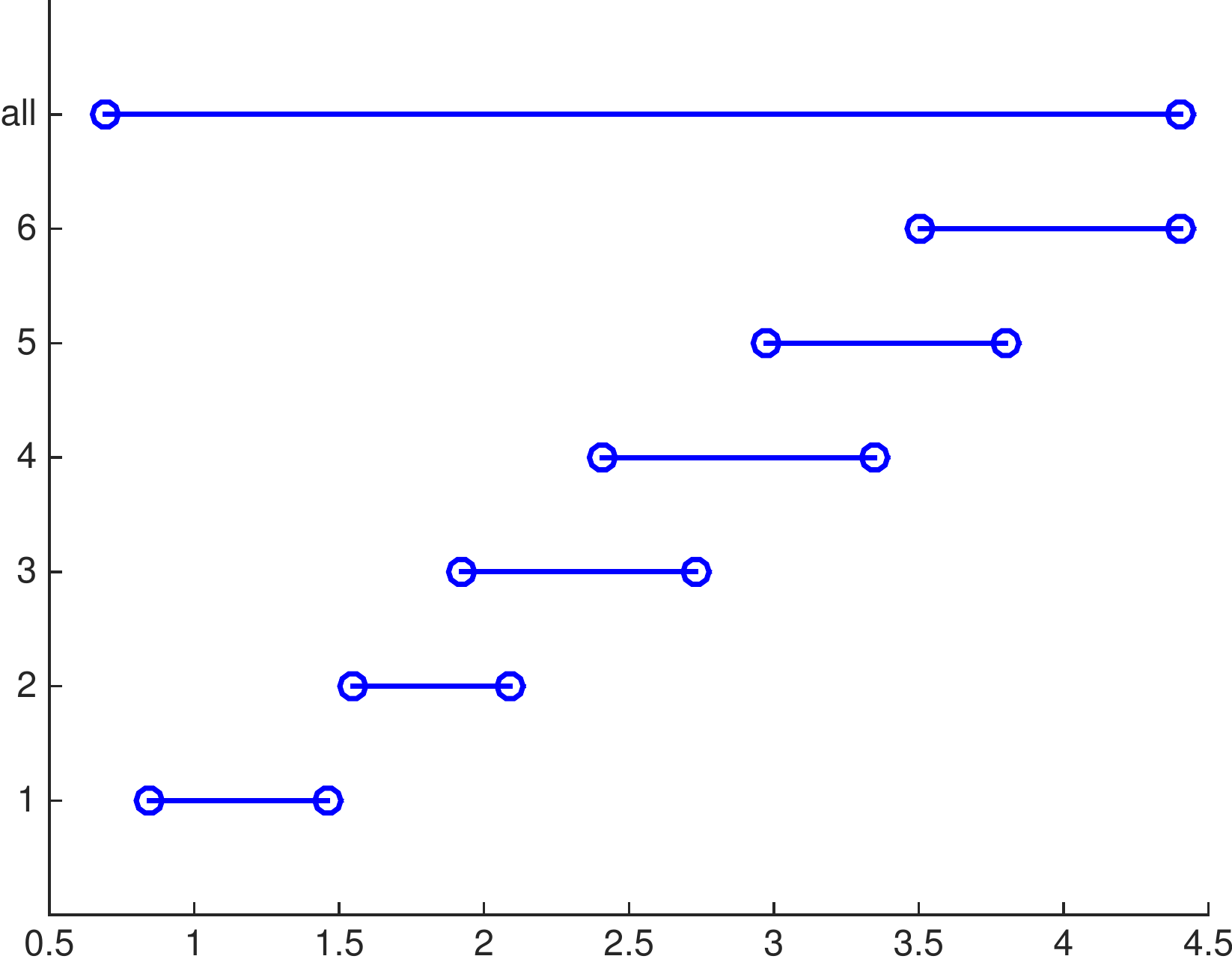}
\caption{Ranges of eigenvalues in $\V$ and $\W^{(k)}$ ($\zeta=\infty$) for $k=1,\ldots,6$ and $a(x)=I_d$ (the Laplacian)}
\label{fig:eigs_Lap}
\end{figure}

\subsection{Algorithms}

\subsubsection{The gamblet transform}
We will now describe the gamblet transform for the discrete operator obtained from the numerical approximation of \eqref{eqnscalarzeta}.
Consider the finite-element  solution of \eqref{eqnscalarzeta} over a basis $(\varphi_i)_{i\in \N}$ of fine-scale  elements.
To facilitate the presentation, assume that $(\varphi_i)_{i\in \N}$ is obtained from
$\T_\h$,  a regular fine mesh  discretization of $\Omega$ of resolution $\h$ with $0<\h \ll 1$. Let $\N$ be the set of interior nodes $z_i$ and $N=|\mathcal{N}|$ be the number of interior nodes ($N= \mathcal{O}(\h^{-d})$) of $\T_\h$. Write $(\varphi_i)_{i\in \N}$ a set of conforming nodal basis elements (of $H^1_0(\Omega)$) constructed from $\T_\h$ such that
for each $i\in \N$,  $\supp(\varphi_i)\subset B(z_i, C_0 \h)$, for $y\in \R^N$,
\begin{equation}\label{eqhhgfff65f}
\ubar{\gamma} \h^d |y|^2  \leq \|\sum_{i\in \N} y_i \varphi_i \|_{L^2(\Omega)}^2 \leq \bar{\gamma} \h^d |y|^2
\end{equation}
and
\begin{equation}\label{eqhhgfff65fip}
\|\nabla v\|_{L^2(\Omega)}\leq C_1 \h^{-1} \| v\|_{L^2(\Omega)}
\end{equation}
 for $v\in \Span\{\varphi_i\mid i\in \N\}$,
for some constants $\ubar{\gamma}, \bar{\gamma}, C_0, C_1\approx \mathcal{O}(1)$.

In addition to  properties \eqref{eqhhgfff65f} and  \eqref{eqhhgfff65fip} we assume that  $\T_\h$ is such that: (1) $\h=H^\q$ and (2) each set $\tau_i^{(\q)}$ ($i\in \I^{(\q)}$) contains one and only one interior node of $\T_\h$. Using this one to one correspondence we use the elements of $\I^{\q}$ to relabel the interior nodes $z_i$ of $\T_\h$ and their respective nodal elements $\varphi_i$.

 Write $\V:=\operatorname{span}\{\varphi_i \mid i\in \I\}$.
Given the matrices $\pi^{(k,k+1)}$ defined as in constructions \ref{constpiQQ} and \ref{eqndkjndnfj}, and the matrices $W^{(k)}$ obtained as in Definition \ref{defwk}, Algorithm  \ref{gambletsolve} computes elements of $\V$ corresponding discrete gamblets $(\psi_i^{(k),\z})_{i\in \I^{(k)}}$ and $\{\chi_i^{(k),\z}\}_{i\in \J^{(k)}}$. As in Theorem  \ref{thmgugyug0}, these discrete gamblets induce the following $\<\cdot,\cdot\>_\z$ orthogonal decomposition of the solution space, corresponding to a diagonalization of the stiffness matrix $A_{i,j}=\<\varphi_i,\varphi_j\>_z$ into blocks of  uniformly bounded condition numbers (which can be approximated by  truncated blocks thanks to the exponential decay of gamblets).
\begin{equation}\label{eqdedhhiuhed3dd}
\V=\V^{(1),\z}\oplus_\z \W^{(2),\z} \oplus_\z  \cdots \oplus_\z \W^{(\q),\z},
\end{equation}
Observe that the measurement functions $\phi_i^{(k)}$ do not appear explicitly in Algorithm  \ref{gambletsolve} (which depends only on the interpolation matrices  $\pi^{(k,k+1)}$ defined in Constructions \ref{constpiQQ} and \ref{eqndkjndnfj}).
Note that at the finest scale, level $\q$ gamblets $\psi_i^{(\q),\z}$ are simply the basis elements $\varphi_i$ used to discretize the PDE  \eqref{eqnscalarzeta} (as discussed in \cite{gamblet17}, writing $\bar{M}$ the mass matrix  $\bar{M}^\varphi_{i,j}=\int_{\Omega} \varphi_i \varphi_j  $, selecting $\psi_i^{(\q),\z}=\varphi_i$
 is equivalent to using $\phi_i^{(\q)}=\sum_{j\in \I^{(\q)}} \bar{M}_{i,j}^{-1}\varphi_j$ as level $\q$ measurement functions and defining $\phi_i^{(k)}$ via aggregation as in \eqref{eqndkjndnfj}).

\begin{algorithm}[!ht]
\caption{Gamblet Transform.}\label{gambletsolve}
\begin{algorithmic}[1]
\STATE\label{step2} For $i,j\in \I^{(\q)}$, $A^{\varphi,\z}_{i,j}=\<\varphi_i, \varphi_j\>_\z$ \COMMENT{Stiffness matrix}
\STATE\label{step3} For $i\in \I^{(\q)}$, $\psi^{(\q),\z}_i=\varphi_i$\COMMENT{Level $\q$ gamblets}
\STATE\label{step5} For $i,j\in \I^{(\q)}$, $A^{(\q),\z}_{i,j}= \< \psi_i^{(\q),\z},  \psi_j^{(\q),\z}\>_\z$   \COMMENT{$A^{(\q),\z}=A^{\varphi,\z}$}
\FOR{$k=\q$ to $2$}
\STATE\label{step9}  For $i\in \J^{(k)}$, $\chi^{(k),\z}_i=\sum_{j \in \I^{(k)}} W_{i,j}^{(k)} \psi_j^{(k),\z}$ \COMMENT{Level $k$, $\chi$ gamblets}
\STATE\label{step7} $B^{(k),\z}= W^{(k)}A^{(k),\z}W^{(k),T}$ \COMMENT{$B^{(k),\z}_{i,j}=\<\chi_i^{(k),\z},\chi_j^{(k),\z}\>_\z$}
\STATE\label{step11}  $ D^{(k,k-1),\z}= -B^{(k),\z,-1}W^{(k)}A^{(k),\z}\bar{\pi}^{(k,k-1)}$ \COMMENT{$B^{(k),\z,-1}=$matrix inverse of $B^{(k),\z}$}
\STATE\label{step12} $R^{(k-1,k),\z}=\bar{\pi}^{(k-1,k)}+D^{(k-1,k),\z}W^{(k)}$ \COMMENT{Interpolation/restriction operator}
\STATE\label{step14} For $i\in \I^{(k-1)}$, $\psi^{(k-1),\z}_i=\sum_{j \in  \I^{(k)}} R_{i,j}^{(k-1,k),\z} \psi_j^{(k),\z}$ \COMMENT{Level $k-1$, $\psi$ gamblets}
\STATE\label{step13} $A^{(k-1),\z}= R^{(k-1,k),\z}A^{(k),\z}R^{(k,k-1),\z}$ \COMMENT{$A^{(k-1),\z}_{i,j}=\<\psi_i^{(k-1),\z},\psi_j^{(k-1),\z}\>_\z$}
\ENDFOR
\end{algorithmic}
\end{algorithm}

\subsubsection{Linear solve with gamblets}

Given $g=\sum_{i\in \N} g_i \varphi_i$, Algorithm \ref{gambletsolveexlinsys} computes $u\in \Span\{\varphi_i \mid i\in \N\}$ such that,
\begin{equation}\label{eqhuiuhihyuhuiu}
 \<\varphi_j,u\>_\z=\int_{\Omega} \varphi_j g, \text{ for all } j\in \N
\end{equation}
Algorithm \ref{gambletsolveexlinsys} is exact and $u=u^{(1),\z}+(u^{(2),\z}-u^{(1),\z})+\cdots+(u^{(\q),\z}-u^{(\q-1),\z})$ obtained in Line \ref{step18} is the orthogonal decomposition of the solution $u$ of \eqref{eqhuiuhihyuhuiu} over $\V=\V^{(1),\z}\oplus_\z \W^{(2),\z} \oplus_\z  \cdots \oplus_\z \W^{(\q),\z}$.

\begin{algorithm}[!ht]
\caption{Linear solve with exact gamblets.}\label{gambletsolveexlinsys}
\begin{algorithmic}[1]
\STATE\label{step4} For $i\in \I^{(\q)}$, $g^{(\q),\z}_i=g_i$      \COMMENT{$g^{(\q),\z}_i=\int_{\Omega} \psi_i^{(\q),\z} g$ with $g=\sum_{i\in \I^{(\q)}} g_i \varphi_i$}
\FOR{$k=\q$ to $2$}
\STATE\label{step8} $w^{(k),\z}=B^{(k),\z,-1} W^{(k)} g^{(k),\z}$
\STATE\label{step10} $u^{(k),\z}-u^{(k-1),\z}=\sum_{i\in \J^{(k)}}w^{(k),\z}_i \chi^{(k),\z}_i$
\STATE\label{step15} $g^{(k-1),\z}=R^{(k-1,k),\z} g^{(k),\z}$
\ENDFOR
\STATE\label{step16} $ U^{(1),\z}=A^{(1),\z,-1}g^{(1),\z}$
\STATE\label{step17} $u^{(1),\z}=\sum_{i \in \I^{(1)}} U^{(1),\z}_i \psi^{(1),\z}_i$
\STATE\label{step18} $u=u^{(1),\z}+(u^{(2),\z}-u^{(1),\z})+\cdots+(u^{(\q),\z}-u^{(\q-1),\z})$
\end{algorithmic}
\end{algorithm}

\subsection{Fast gamblet transform}
Algorithms \ref{gambletsolve} and \ref{gambletsolveexlinsys} can be modified to operate in linear complexity. This near linear complexity is possible thanks to three main properties, (1) Nesting: level $k$ gamblets and stiffness matrices can be computed from level $k+1$ gamblets and stiffness matrices; (2) Bounded condition numbers: It follows from Theorem \ref{thmodhehiudhehd} that the linear systems involved in Algorithms \ref{gambletsolve} and \ref{gambletsolveexlinsys} have uniformly bounded condition numbers; (3) Localization: gamblets can be localized as a function of the desired accuracy.
The resulting modified algorithms are  \ref{fastgambletsolve} and \ref{gambletsolveexlinsysloc}.

\begin{algorithm}[!ht]
\caption{Localized Gamblets.}\label{fastgambletsolve}
\begin{algorithmic}[1]
\STATE\label{line3} For $i\in \I^{(\q)}$, $\psi^{(\q),\z,\loc}_i=\varphi_i$
\STATE\label{line5} For $i,j\in \I^{(\q)}$, $A^{(\q),\z,\loc}_{i,j}= \< \psi_i^{(\q),\z,\loc}, \psi_j^{(\q),\z,\loc}\>_\z$
\FOR{$k=\q$ to $2$}
\STATE\label{line7} $B^{(k),\z,\loc}= W^{(k)}A^{(k),\z,\loc}W^{(k),T}$
\STATE\label{line9}  For $i\in \J^{(k)}$, $\chi^{(k),\z,\loc}_i=\sum_{j \in \I^{(k)}} W_{i,j}^{(k)} \psi_j^{(k),\z,\loc}$
\STATE\label{line11}  $\Inv(B^{(k),\z,\loc} D^{(k,k-1),\z,\loc}=   -W^{(k)}A^{(k),\z,\loc}\bar{\pi}^{(k,k-1)},\rho_{k-1})$ \COMMENT{Def.~\ref{defb}, Thm.~\ref{tmdiscreteaccuracy}}
\STATE\label{line12} $R^{(k-1,k),\z,\loc}=\bar{\pi}^{(k-1,k)}+D^{(k-1,k),\z,\loc}W^{(k)}$ \COMMENT{Def.~\ref{defb}}
\STATE\label{line13} $A^{(k-1),\z,\loc}= R^{(k-1,k),\z,\loc}A^{(k),\z,\loc}R^{(k,k-1),\z,\loc}$
\STATE\label{line14} For $i\in \I^{(k-1)}$, $\psi^{(k-1),\z,\loc}_i=\sum_{j \in  \I^{(k)}} R_{i,j}^{(k-1,k),\z,\loc} \psi_j^{(k),\z,\loc}$
\ENDFOR
\end{algorithmic}
\end{algorithm}

Algorithm \ref{fastgambletsolve}  achieves $\mathcal{O}\big(N \ln^{3d} N\big)$ complexity in computing approximate gamblets (sufficient to achieve a  given level of accuracy). This fast algorithm is obtained by localizing/truncating the linear systems corresponding to Line ref{line11} in Algorithm \ref{gambletsolve}. The approximation error induced by these localization/truncation steps is controlled by the exponential decay of gamblets $\psi_i^{(k),\z}$ and $\chi_i^{(k),\z}$ and the uniform bound on the condition numbers of the matrices $B^{(k),\z}$ and $A^{(1),\z}$.
We define these localization/truncation steps as follows. For $k\in \{1,\ldots,q\}$ and $i\in \I^{(k)}$  define $i^{\rho}$ as the subset of indices $j\in \I^{(k)}$ whose corresponding subdomains $\tau_j^{(k)}$ are at distance at most $H_k \rho$ from $\tau_i^{(k)}$.

Note that level $\q$  gamblets  $\psi^{(\q),\z,\loc}_i$  are simply the finite-elements $\varphi_i$ used to discretize the operator. (Line \ref{line3} of Algorithm \ref{fastgambletsolve}).
Line  \ref{line11} of Algorithm \ref{fastgambletsolve} is defined  as follows.
\begin{Definition}\label{defb}
Let $k\in \{2,\ldots,\q\}$ and $B$ be the positive definite $\J^{(k)}\times \J^{(k)}$ matrix $B^{(k),\z,\loc}$ computed in Line \ref{line7} of Algorithm \ref{fastgambletsolve}.
For $i\in \I^{(k-1)}$, let $\rho=\rho_{k-1}$ and let $i^\chi$ be the subset of indices $j\in \J^{(k)}$ such that $j^{(k-1)}\in i^{\rho}$   (recall that if $j=(j_1,\ldots,j_k)\in \J^{(k)}$ then $j^{(k-1)}:=(j_1,\ldots,j_{k-1})\in \I^{(k-1)}$).
$B^{(i,\rho)}$ be the $i^{\chi}\times i^{\chi}$ matrix defined by $B^{(i,\rho)}_{l,j}=B_{l,j}$ for $l,j \in i^{\chi}$.
 Let $b^{(i,\rho)}$ be the $|i^{\chi}|$-dimensional vector defined by $b_j^{(i,\rho)}=-(W^{(k)}A^{(k),\z,\loc}\bar{\pi}^{(k,k-1)})_{j,i}$ for $j\in i^{\chi}$. Let $y^{(i,\rho)}$ be the $|i^{\chi}|$-dimensional vector solution of
$B^{(i,\rho)} y^{(i,\rho)}=b^{(i,\rho)}$.
We define  the solution $D^{(k,k-1),\z,\loc}$ of the localized linear system $\Inv(B^{(k),\z,\loc} D^{(k,k-1),\z,\loc}=   -W^{(k)}A^{(k),\z,\loc}\bar{\pi}^{(k,k-1)},\rho_{k-1})$
 as the $\J^{(k)}\times \I^{(k-1)}$ sparse matrix  given by $D^{(k,k-1),\z,\loc}_{j,i}=0$ for $j\not \in i^{\chi}$ and $D^{(k,k-1),\z,\loc}_{j,i}= y_j^{(i,\rho)}$ for $j\in i^{\chi}$. $D^{(k-1,k),\z,\loc}$ (Line \ref{line12} of Algorithm \ref{fastgambletsolve}) is then defined as the transpose of $D^{(k,k-1),\z,\loc}$.
\end{Definition}

\begin{Remark}\label{rmklocalgfast} Definition \ref{defb} (Line \ref{line7} of Algorithm \ref{fastgambletsolve}) is equivalent to localizing the computation of each gamblet $\psi_i^{(k-1),\z}$ to a subdomain of size $H_{k-1} \rho_{k-1}$, i.e., the   gamblet  $\psi^{(k-1),\z,\loc}_i$ computed in Line \ref{line14} of Algorithm \ref{fastgambletsolve} is  the solution of (1) the problem of finding $\psi$ in the affine space $\sum_{j\in \I^{(k)}}\bar{\pi}_{i,j}^{(k-1,k)}\psi_j^{(k),\z,\loc}+\Span\{\chi_j^{(k),\z,\loc} \mid j^{(k-1)}\in i^{\rho_{k-1}}\}$  such that $\psi$ is $\<\cdot,\cdot\>_\z$ orthogonal to $\Span\{\chi_j^{(k),\z,\loc} \mid j^{(k-1)}\in i^{\rho_{k-1}}\}$, and (2)
 the problem of minimizing $\|\psi\|_\z$ in $\Span\{\psi_l^{(k),\z,\loc} \mid l^{(k-1)}\in i^{\rho_{k-1}}\}$ subject to constraints $\int_{\Omega} \phi_j^{(k-1)} \psi =\delta_{i,j}$ for $j\in i^{\rho_{k-1}}$.
\end{Remark}

We will (occasionally) write $H_k$ for $H^k$ to emphasize that, as in \cite{OwhadiMultigrid:2015}, the essentially property of the sequence $H_k$  is that $H_{k}/H_{k+1}$ remains uniformly bounded away from $1$ and $\infty$.

 To simplify the presentation,  we will  write $C$ any constant that depends only on $d, \Omega, \lambda_{\min}(a), \lambda_{\max}(a), \delta, \ubar{\gamma}, \bar{\gamma}$, $C_0, C_1, \mu_{\min},\mu_{\max}, \delta$ but not on $h, \z$ nor $H$  (e.g., $2 C \z H^2\lambda_{\max}(a)$ will  be written $C \z H^2$).

 The following theorem shows that the condition numbers of the localized stiffness matrices $B^{(k),\z,\loc}$ remain uniformly bounded provided that the computation of level $k$ gamblets is localized to subdomains of size $\displaystyle H_k \ln \frac{1}{H_k}$.

\begin{Theorem}\label{tmdiscreteaccuracy0}
Let $\epsilon \in (0,1)$. Assume that
\begin{enumerate}
\item $\rho_k\geq C \big((1+\frac{1}{\ln(1/H)})\ln \frac{1}{H_k}+\ln \frac{1}{\epsilon}\big)$ for $k\in \{1,\ldots,\q\}$.
\item For $k\in \{2,\ldots,\q\}$ and each $i\in \I^{(k-1)}$, the localized linear system $B^{(i,\rho)} y=b$ of Definition \ref{defb} and Line \ref{line11} of Algorithm \ref{fastgambletsolve} is solved up to accuracy
$|y-y^{\app}|_{B^{(i,\rho)}} \leq C^{-1} H^{3-k+kd/2}\epsilon/k^2$ (using the notation  $|e|_A^2:=e^T A e$, and writing $y^{\app}$ the approximation of $y$).
\end{enumerate}
 Then it holds true that  $\operatorname{Cond}(A^{(1),\z,\loc})\leq C H^{-2}$ and
 for $k\in \{2,\ldots,\q\}$,
$\operatorname{Cond}(B^{(k),\z,\loc})\leq C H^{-2}$. Furthermore for
 $k\in \{1,\ldots,\q\}$ and  $\z=\infty$, $\displaystyle \frac{1}{C} \leq \lambda_{\min}(A^{(k),\z,\loc})$  and $\lambda_{\max}(A^{(k),\z,\loc})\leq C H^{-2k}$,
and for $k\in \{2,\ldots,\q\}$ and $\z=\infty$, $\displaystyle\frac{1}{C} H^{-2(k-1) }   \leq \lambda_{\min}(B^{(k),\z,\loc})$  and $\lambda_{\max}(B^{(k),\z,\loc})\leq  C H^{-2k }$.  Additionally  the functions $(\psi_i^{(1),\z,\loc})_{i\in \I^{(1)}}$ and $(\chi_i^{(k),\z,\loc})_{k\in \{2,\ldots,\q\}, i\in \J^{(k)}}$ are linearly independent and form a basis of $\V$.
\end{Theorem}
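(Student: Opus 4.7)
The plan is to prove Theorem \ref{tmdiscreteaccuracy0} by a downward induction on $k$ (from $\q$ to $1$), comparing the localized quantities with their non-localized counterparts from Theorem \ref{thmodhehiudhehd}, and controlling the accumulated perturbation through the exponential decay estimate of Theorem \ref{thmodhehiudhehdswws} together with the prescribed inner-solve accuracy.

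First I would set up the variational characterization of the localized gamblet made explicit in Remark \ref{rmklocalgfast}: the exactly-localized (infinite-accuracy inner solve) gamblet $\tilde{\psi}_i^{(k-1),\z}$ minimizes $\|\psi\|_\z$ over $\Span\{\psi_l^{(k),\z,\loc}\mid l^{(k-1)}\in i^{\rho_{k-1}}\}$ subject to $\int \phi_j^{(k-1)}\psi=\delta_{i,j}$. Comparing this to the global variational problem solved by the exact gamblet $\psi_i^{(k-1),\z}$, I would invoke the exponential decay bound (Theorem \ref{thmodhehiudhehdswws}) to obtain
\[
\|\psi_i^{(k-1),\z}-\tilde{\psi}_i^{(k-1),\z}\|_\z \leq C\,\|\psi_i^{(k-1),\z,0}\|_\z\, e^{-\rho_{k-1}/C},
\]
and then a separate estimate for the inner-solve perturbation $\tilde{\psi}_i^{(k-1),\z}\mapsto \psi_i^{(k-1),\z,\loc}$ using the accuracy hypothesis $|y-y^{\app}|_{B^{(i,\rho)}}\leq C^{-1}H^{3-k+kd/2}\epsilon/k^2$. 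Combining these with the localization hypothesis $\rho_k\geq C((1+1/\ln(1/H))\ln(1/H_k)+\ln(1/\epsilon))$ yields a per-level energy error of order $\epsilon H^{k}/k^2$ after rescaling, whose sum over levels remains $O(\epsilon)$.

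Next I would transfer these energy-norm estimates into matrix-level perturbation bounds. Writing $\Delta B^{(k)}:=B^{(k),\z,\loc}-B^{(k),\z}$ and likewise for $A^{(1),\z}$, the pointwise gamblet error bound translates (via Cauchy--Schwarz in the scalar product $\langle\cdot,\cdot\rangle_\z$) into $\|\Delta B^{(k)}\|_2\leq C\epsilon H^{-2k}$ (and similarly $\|\Delta A^{(1)}\|_2\leq C\epsilon H^{-2}$). Combined with Theorem \ref{thmodhehiudhehd}, which supplies $\lambda_{\min}(B^{(k),\z})\geq C^{-1}H^{-2(k-1)}$ and $\lambda_{\max}(B^{(k),\z})\leq C H^{-2k}$ in the $\z=\infty$ case, a standard Weyl/eigenvalue-perturbation argument preserves the same scaling (up to constants and up to taking $\epsilon$ suitably small, say $\epsilon\leq 1/(CC')$), yielding the claimed bounds on $\lambda_{\min}, \lambda_{\max}$ and hence $\operatorname{Cond}(B^{(k),\z,\loc})\leq CH^{-2}$ and $\operatorname{Cond}(A^{(1),\z,\loc})\leq CH^{-2}$. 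The $\z$-finite case follows by the same argument since the bound on $\operatorname{Cond}(B^{(k),\z})\leq CH^{-2}$ in Theorem \ref{thmodhehiudhehd} already covers all $\z\in(0,\infty]$.

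The main obstacle, which I would address carefully in the induction, is the accumulation of errors across levels: the computation of $A^{(k-1),\z,\loc}$ in Line \ref{line13} uses $A^{(k),\z,\loc}$ rather than the exact $A^{(k),\z}$, so the gamblet error at level $k-1$ inherits error from level $k$. This is precisely why the hypothesis on $\rho_k$ carries the extra $1+1/\ln(1/H)$ factor: since there are $\q\sim \ln(1/h)/\ln(1/H)$ levels, one needs each level's error to decay by a factor depending on $\q$, which in turn requires an extra $\ln(1/H_k)/\ln(1/H)=k$ in the exponent --- this is exactly what the $(1+1/\ln(1/H))\ln(1/H_k)$ term provides. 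The $1/k^2$ factor in the inner-solve accuracy hypothesis then makes the sum $\sum_k k^{-2}$ converge, controlling the total accumulated perturbation. Finally, linear independence of $(\psi_i^{(1),\z,\loc})_{i\in \I^{(1)}}$ and $(\chi_i^{(k),\z,\loc})_{k\geq 2,i\in\J^{(k)}}$ follows immediately from the lower eigenvalue bounds $\lambda_{\min}(A^{(1),\z,\loc})>0$ and $\lambda_{\min}(B^{(k),\z,\loc})>0$, and by a dimension count these functions span $\V$.
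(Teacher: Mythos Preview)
The paper does not give its own proof of this theorem: at the opening of Section~\ref{secgamblets} it states that proofs of all results in that section are deferred to \cite{OwhadiMultigrid:2015} and \cite{gamblet17}. Your outline is consistent with the approach taken in those references---a downward induction on $k$, combining the exponential decay of gamblets (Theorem~\ref{thmodhehiudhehdswws}) with the prescribed inner-solve accuracy to bound $\|\psi_i^{(k),\z}-\psi_i^{(k),\z,\loc}\|_\z$, and then a Weyl-type eigenvalue perturbation to transfer the bounds of Theorem~\ref{thmodhehiudhehd} to the localized matrices. Your explanation of the role of the $(1+1/\ln(1/H))$ and $1/k^2$ factors in controlling error accumulation across $\q$ levels is also the right intuition.

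One point that needs tightening is your last step. Positive definiteness of $A^{(1),\z,\loc}$ and of each $B^{(k),\z,\loc}$ only gives linear independence \emph{within} each family $(\psi_i^{(1),\z,\loc})_i$ and $(\chi_i^{(k),\z,\loc})_i$ separately; it does not by itself rule out dependencies \emph{across} different levels, since the localized subspaces $\W^{(k),\z,\loc}$ are not exactly $\langle\cdot,\cdot\rangle_\z$-orthogonal once the inner solves are inexact. The clean fix is to note that by the nested construction (Lines~\ref{line12}--\ref{line14} of Algorithm~\ref{fastgambletsolve}) the level-$(k-1)$ localized gamblets are obtained from the level-$k$ ones via the square transition matrix $\bigl(\begin{smallmatrix} R^{(k-1,k),\z,\loc}\\ W^{(k)}\end{smallmatrix}\bigr)$, which is a small perturbation of the exact (and invertible) $\bigl(\begin{smallmatrix} R^{(k-1,k),\z}\\ W^{(k)}\end{smallmatrix}\bigr)$; invertibility of each transition matrix then propagates the basis property from level $\q$ (where $\psi_i^{(\q),\z,\loc}=\varphi_i$) down to level $1$.
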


We now present Algorithm \ref{gambletsolveexlinsysloc}, which computes an approximation of the solution of \eqref{eqhuiuhihyuhuiu} using localized gamblets (up to $\epsilon$ accuracy in $H^1_0(\Omega)$-norm).

\begin{algorithm}[!ht]
\caption{Linear solve with localized gamblets.}\label{gambletsolveexlinsysloc}
\begin{algorithmic}[1]
\STATE\label{linstep4} For $i\in \I^{(\q)}$, $g^{(\q),\z,\loc}_i=g_i$      \COMMENT{$g=\sum_{i\in \I^{(\q)}} g_i \varphi_i$}
\FOR{$k=\q$ to $2$}
\STATE\label{linstep8} $w^{(k),\z,\loc}=B^{(k),\z,\loc,-1} W^{(k)} g^{(k),\z,\loc}$
\STATE\label{linstep10} $u^{(k),\z,\loc}-u^{(k-1),\z,\loc}=\sum_{i\in \J^{(k)}}w^{(k),\z,\loc}_i \chi^{(k),\z,\loc}_i$
\STATE\label{linstep15} $g^{(k-1),\z,\loc}=R^{(k-1,k),\z,\loc} g^{(k),\z,\loc}$
\ENDFOR
\STATE\label{linstep16} $ U^{(1),\z,\loc}=A^{(1),\z,\loc,-1}g^{(1),\z,\loc}$
\STATE\label{linstep17} $u^{(1),\z,\loc}=\sum_{i \in \I^{(1)}} U^{(1),\z,\loc}_i \psi^{(1),\z,\loc}_i$
\STATE\label{linstep18} $u^{\loc}=u^{(1),\z,\loc}+(u^{(2),\z,\loc}-u^{(1),\z,\loc})+\cdots+(u^{(\q),\z,\loc}-u^{(\q-1),\z,\loc})$
\end{algorithmic}
\end{algorithm}

\begin{Theorem}\label{tmdiscrete}
Let $u$ be the solution of the discrete system \eqref{eqhuiuhihyuhuiu}. Let $u^{(1),\z,\loc}$, $u^{(k),\z,\loc}-u^{(k-1),\z,\loc}$, $u^{\loc}$, $A^{(k),\z,\loc}$ and $B^{(k),\z,\loc}$ be the outputs of algorithms \ref{fastgambletsolve} and \ref{gambletsolveexlinsysloc}. Let $u^{(1),\z}$ and $u^{(k),\z}-u^{(k-1),\z}$  be the outputs of Algorithm \ref{gambletsolveexlinsys}.
For $k\in \{2,\ldots,\q\}$, write $u^{(k),\z,\loc}:=u^{(1),\z,\loc}+\sum_{j=2}^k (u^{(j),\z,\loc}-u^{(j-1),\z,\loc})$.
Let $\epsilon \in (0,1)$, it holds true that if $\rho_k\geq C \big((1+\frac{1}{\ln(1/H)})\ln \frac{1}{H_k}+\ln \frac{1}{\epsilon}\big)$ for $k\in \{1,\ldots,\q\}$ then
\begin{enumerate}
\item for $k\in \{1,\ldots,\q-1\}$ we have
$\|u^{(k),\z} - u^{(k),\z,\loc}\|_{\z} \leq   \epsilon \|g\|_{H^{-1}(\Omega)}$ and $\|u^{(k),\z} - u^{(k),\z,\loc}\|_{\z} \leq  C (H_k+\epsilon) \|g\|_{L^2(\Omega)}$
\item $\|u^{(k),\z}-u^{(k-1),\z}-(u^{(k),\z,\loc}-u^{(k-1),\z,\loc})\|_{\z} \leq \frac{\epsilon}{2 k^2} \|g\|_{H^{-1}(\Omega)}$.
\item Furthermore, $\|u - u^{\loc}\|_{\z} \leq  \epsilon \|g\|_{H^{-1}(\Omega)}$.
\end{enumerate}
\end{Theorem}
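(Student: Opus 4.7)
My plan rests on the $\<\cdot,\cdot\>_\z$-orthogonal subband decomposition $\V=\V^{(1),\z}\oplus_\z \W^{(2),\z}\oplus_\z\cdots\oplus_\z\W^{(\q),\z}$ of Theorem \ref{thmgugyug0}, the uniform conditioning of $A^{(1),\z,\loc}$ and $B^{(k),\z,\loc}$ from Theorem \ref{tmdiscreteaccuracy0}, and the exponential decay of gamblets from Theorem \ref{thmodhehiudhehdswws}. I would prove the per-subband bound (2) first and then derive (1) and (3) from it by summing over levels and exploiting $\sum_{k\geq 1} 1/(2k^2)=\pi^2/12<1$. Setting $\delta_k:=u^{(k),\z}-u^{(k-1),\z}$ and $\delta_k^\loc:=u^{(k),\z,\loc}-u^{(k-1),\z,\loc}$ (with the convention that $\delta_1$ is the coarsest solve and a separate $\epsilon/2$ budget is set aside for it), the triangle inequality immediately gives $\|u^{(k),\z}-u^{(k),\z,\loc}\|_\z\leq\sum_{j=1}^k\|\delta_j-\delta_j^\loc\|_\z\leq\epsilon\|g\|_{H^{-1}(\Omega)}$, and taking $k=\q$ yields (3). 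For the second inequality of (1) I would additionally invoke the standard gamblet approximation estimate $\|u-u^{(k),\z}\|_\z\leq CH_k\|g\|_{L^2(\Omega)}$ (a consequence of the optimal-recovery property of the $\psi_i^{(k),\z}$ together with a Poincar\'e inequality on each $\tau_i^{(k)}$, since $\delta_k$ has vanishing $\phi_i^{(k-1)}$-averages), combine with the first inequality of (1) and $\|g\|_{H^{-1}}\leq C\|g\|_{L^2}$, and apply the triangle inequality.

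For the per-subband bound (2) I would unroll Algorithm \ref{gambletsolveexlinsysloc} alongside Algorithm \ref{gambletsolveexlinsys} and compare level by level. Since $\delta_k$ is the $\<\cdot,\cdot\>_\z$-orthogonal projection of $u$ onto $\W^{(k),\z}$, the discrepancy $\delta_k-\delta_k^\loc$ is driven by three error sources: (i) the substitution $B^{(k),\z}\to B^{(k),\z,\loc}$ in the inner solve on line \ref{linstep8}; (ii) the substitution $\chi_i^{(k),\z}\to\chi_i^{(k),\z,\loc}$ in the synthesis on line \ref{linstep10}; and (iii) the propagated discrepancy $g^{(k),\z,\loc}-g^{(k),\z}$ inherited from the use of $R^{(k-1,k),\z,\loc}$ at all earlier levels (line \ref{linstep15}). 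Each piece is controlled by writing it as an energy norm of a function in $\V$ and applying the exponential-decay estimate of Theorem \ref{thmodhehiudhehdswws}, while the uniform conditioning $\operatorname{Cond}(B^{(k),\z,\loc})\leq CH^{-2}$ (resp.\ $\operatorname{Cond}(A^{(1),\z,\loc})\leq CH^{-2}$) prevents error amplification in the inner solves. The hypothesis $\rho_k\geq C\big((1+1/\ln(1/H))\ln(1/H_k)+\ln(1/\epsilon)\big)$ is calibrated precisely so that the polynomial-in-$H_k$ prefactors and the weight $k^2$ are absorbed by the exponential, yielding $\|\delta_k-\delta_k^\loc\|_\z\leq \tfrac{\epsilon}{2k^2}\|g\|_{H^{-1}(\Omega)}$.

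The principal technical obstacle lies in step (2): the three error sources (i)--(iii) compound along the restriction/prolongation chain $R^{(k-1,k),\z,\loc}$, so one must show that no extra factor of $k$, $\ln(1/H)$, or $H^{-2}$ from the conditioning sneaks past the $1/k^2$ weight on the right-hand side. The inner-solve tolerance $|y-y^{\app}|_{B^{(i,\rho)}}\leq C^{-1}H^{3-k+kd/2}\epsilon/k^2$ prescribed in Theorem \ref{tmdiscreteaccuracy0} is exactly tuned to absorb the $L^2$-to-discrete norm scaling $\h^{d/2}$ of $\varphi_i$ and the $H^{-2}$ amplification at each level, so that after propagating through all earlier levels the cumulative per-band error remains $O(\epsilon/k^2)\|g\|_{H^{-1}(\Omega)}$.
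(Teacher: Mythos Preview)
The paper does not give its own proof of this theorem; at the start of Section \ref{secgamblets} it states that all proofs in this section are deferred to \cite{OwhadiMultigrid:2015} and \cite{gamblet17}. Your outline is the standard route taken in those references: establish the per-subband estimate (2) via exponential decay plus uniform conditioning, then sum using $\sum_{k\geq 1}1/(2k^2)<1$ to obtain (1) and (3), and combine with the level-$k$ approximation bound $\|u-u^{(k),\z}\|_\z\leq CH_k\|g\|_{L^2}$ for the second inequality in (1). Your identification of the three error sources (stiffness-matrix perturbation, basis perturbation, propagated right-hand-side error) and of the compounding along the $R^{(k-1,k),\z,\loc}$ chain as the main technical hurdle is exactly right.

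One small point of confusion: the inner-solve tolerance $|y-y^{\app}|_{B^{(i,\rho)}}\leq C^{-1}H^{3-k+kd/2}\epsilon/k^2$ that you invoke at the end is a hypothesis of Theorems \ref{tmdiscreteaccuracy0} and \ref{tmdiscreteaccuracy}, not of Theorem \ref{tmdiscrete}. In Theorem \ref{tmdiscrete} itself the localized systems of Definition \ref{defb} and the solves in lines \ref{linstep8} and \ref{linstep16} of Algorithm \ref{gambletsolveexlinsysloc} are performed \emph{exactly}; the only error is from the spatial truncation to $i^{\rho_k}$. So for the present statement your error source (i) reduces to the discrepancy between $B^{(k),\z}$ and $B^{(k),\z,\loc}$ as matrices (both inverted exactly), not to an iterative-solver residual. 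This does not change your strategy, but it does mean the last paragraph of your proposal is really an argument for Theorem \ref{tmdiscreteaccuracy} rather than for Theorem \ref{tmdiscrete}.
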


\begin{Theorem}\label{tmdiscreteaccuracy}
The results of Theorem \ref{tmdiscrete} remain true if
\begin{enumerate}
\item $\rho_k\geq C \big((1+\frac{1}{\ln(1/H)})\ln \frac{1}{H_k}+\ln \frac{1}{\epsilon}\big)$ for $k\in \{1,\ldots,\q\}$
\item For $k\in \{2,\ldots,\q\}$ and each $i\in \I^{(k-1)}$, the localized linear system $B^{(i,\rho)} y=b$ of Definition \ref{defb} and Line \ref{line11} of Algorithm \ref{fastgambletsolve} is solved up to accuracy $|y-y^{\app}|_{B^{(i,\rho)}} \leq C^{-1} H^{3-k+kd/2}\epsilon/k^2$ (using the notation $|e|_A^2:=e^T A e$, and writing $y^{\app}$ the approximation of $y$).
\item For $k\in \{2,\ldots,\q\}$ the linear system $ B^{(k),\z,\loc}y=  W^{(k)} g^{(k),\z,\loc}$ of Line \ref{linstep8} of  Algorithm \ref{gambletsolveexlinsysloc} is solved up to accuracy $|y-y^{\app}|_{B^{(k),\z,\loc}}\leq  \epsilon \|g\|_{H^{-1}(\Omega)}/(2\q)$.
\end{enumerate}
\end{Theorem}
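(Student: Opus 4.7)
The plan is to treat this as a perturbation result that extends Theorem \ref{tmdiscrete} by tracking how the additional roundoff/iteration errors from inexact linear solves propagate through Algorithms \ref{fastgambletsolve} and \ref{gambletsolveexlinsysloc}, and then to verify that the accuracy budgets in hypotheses (2) and (3) are tight enough that the combined error still fits the bound of Theorem \ref{tmdiscrete}. The key external inputs are (a) Theorem \ref{tmdiscreteaccuracy0}, which guarantees that even with the inexact solves at Line \ref{line11} the matrices $B^{(k),\z,\loc}$ and $A^{(1),\z,\loc}$ retain $\operatorname{Cond}\leq CH^{-2}$ and the spectral bounds $\lambda_{\min}$, $\lambda_{\max}$ listed there; (b) the exponential decay of gamblets (Theorem \ref{thmodhehiudhehdswws}); and (c) Theorem \ref{tmdiscrete}, whose conclusion I want to recover verbatim.

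First I would analyze Algorithm \ref{fastgambletsolve}. At each level $k$ the only step affected by hypothesis (2) is Line \ref{line11}, where $D^{(k,k-1),\z,\loc}$ is replaced by an approximate $\tilde D^{(k,k-1),\z,\loc}$ with column-wise error bounded in the $B^{(i,\rho)}$-norm by $C^{-1}H^{3-k+kd/2}\epsilon/k^2$. Using $\lambda_{\min}(B^{(k),\z,\loc})\geq C^{-1} H^{-2(k-1)}$ (for $\z=\infty$; the general $\z<\infty$ case only improves this) one converts this into an $\ell^2$ bound on each column of $\tilde D - D$. Propagating through Lines \ref{line12}--\ref{line14}, writing the perturbed $\tilde\psi_i^{(k-1),\z,\loc}$ in terms of the perturbed $\tilde\psi_j^{(k),\z,\loc}$, and using $\lambda_{\max}(A^{(k),\z,\loc})\le CH^{-2k}$, one obtains the energy-norm error $\|\tilde\psi_i^{(k-1),\z,\loc} - \psi_i^{(k-1),\z,\loc}\|_\z \leq C H^{k(1-d/2)-3}\epsilon/k^2 \cdot H^{-1}\cdot(\text{telescoping factor})$. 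The specific exponent $3-k+kd/2$ is chosen precisely so that, after multiplication by the worst-case factors $\lambda_{\max}^{1/2}$ arising in the passage back to $\V$ and an $h^{-d/2}$ coefficient-to-function conversion from \eqref{eqhhgfff65f}, the induced perturbation on $A^{(k-1),\z,\loc}$ and on each subsequent gamblet contributes at most $\epsilon/k^2$ in energy norm; summing $\sum_k k^{-2} <\infty$ gives a total perturbation bounded by $C\epsilon$ of the ideal localized output from the hypotheses of Theorem \ref{tmdiscrete}.

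Next I would analyze Algorithm \ref{gambletsolveexlinsysloc} under hypothesis (3). At each level, $w^{(k),\z,\loc}$ is replaced by $\tilde w^{(k),\z,\loc}$ with $|\tilde w - w|_{B^{(k),\z,\loc}}\leq \epsilon\|g\|_{H^{-1}}/(2\q)$. Since $\chi^{(k),\z,\loc}_i$ is $\<\cdot,\cdot\>_\z$-orthonormalized via $B^{(k),\z,\loc}$, the energy-norm error in the $k$-th subband increment $u^{(k),\z,\loc}-u^{(k-1),\z,\loc}$ is exactly this $B^{(k),\z,\loc}$-norm residual. The update of the right-hand side in Line \ref{linstep15} is an exact matrix-vector product by $R^{(k-1,k),\z,\loc}$, which is uniformly bounded in the appropriate norm (again by the spectral bounds of Theorem \ref{tmdiscreteaccuracy0} and the $\bar\pi^{(k-1,k)}$ being an isometry), so the additional error injected into $g^{(k-1),\z,\loc}$ at subsequent levels is already captured by the perturbation of $w^{(k),\z,\loc}$. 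The subspaces $\V^{(1),\z,\loc},\W^{(2),\z,\loc},\ldots,\W^{(\q),\z,\loc}$ remain linearly independent and span $\V$ by Theorem \ref{tmdiscreteaccuracy0}, so orthogonality under $\<\cdot,\cdot\>_\z$ (which holds for the exact decomposition and is preserved up to $O(\epsilon)$ under the perturbation analysis of the previous paragraph) lets us apply the Pythagorean identity: $\|u^{\loc}-u^{\loc,\text{exact solve}}\|_\z^2 \le \sum_{k=1}^{\q}\bigl(\epsilon\|g\|_{H^{-1}}/(2\q)\bigr)^2 \le \epsilon^2\|g\|_{H^{-1}}^2/(4\q)$, hence energy-norm error at most $\epsilon\|g\|_{H^{-1}}/2$.

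Finally I would combine the two perturbation bounds with the conclusion of Theorem \ref{tmdiscrete}. The exact Theorem \ref{tmdiscrete} output (call it $u^{\loc,*}$) satisfies $\|u-u^{\loc,*}\|_\z\le \epsilon\|g\|_{H^{-1}}/2$ under hypothesis (1) alone (by rescaling $\epsilon\mapsto \epsilon/2$, absorbed in $C$), and the two error sources from hypotheses (2) and (3) each contribute at most $\epsilon\|g\|_{H^{-1}}/4$ in energy norm by the previous two paragraphs. A triangle inequality gives the desired $\|u-u^{\loc}\|_\z\le \epsilon\|g\|_{H^{-1}(\Omega)}$, and the per-subband and cumulative bounds of Theorem \ref{tmdiscrete}(1)--(2) follow from the same decomposition applied subband-wise. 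The main obstacle I expect is step 2: verifying that the peculiar accuracy threshold $H^{3-k+kd/2}\epsilon/k^2$ is exactly what is needed. One must carefully chain the column-norm perturbation of $D^{(k,k-1),\z,\loc}$ through the recursive definition of $\psi^{(k-1),\z,\loc}$, $A^{(k-1),\z,\loc}$, and then down through $B^{(k-1),\z,\loc},D^{(k-1,k-2),\z,\loc},\ldots$ without the constants blowing up geometrically in $k$; this is where the factors $H^{kd/2}$ (from the $L^2$-to-coefficient normalization in \eqref{eqhhgfff65f}), $H^{-k}$ (from $\lambda_{\max}^{1/2}$), and $H^3$ (one factor of $H$ for localization slack and $H^2$ for the condition number of $B$) conspire to produce a level-independent remainder, and the $1/k^2$ factor makes the telescoping sum summable.
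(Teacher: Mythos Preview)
The paper does not actually prove Theorem \ref{tmdiscreteaccuracy} in its own text; at the start of Section \ref{secgamblets} it states that all proofs in that section are deferred to \cite{OwhadiMultigrid:2015} and \cite{gamblet17} because they are analogous to the arguments there. So there is no in-paper proof to compare against. Your perturbation strategy---track the inexactness of the localized solves through the recursive construction of $\psi^{(k),\z,\loc}$, $A^{(k),\z,\loc}$, $B^{(k),\z,\loc}$ in Algorithm \ref{fastgambletsolve}, then through the subband solves in Algorithm \ref{gambletsolveexlinsysloc}, and combine with Theorem \ref{tmdiscrete} by triangle inequality---is exactly the structure of the argument in \cite{OwhadiMultigrid:2015}, so your plan is on the right track.

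Two places in your sketch need tightening. First, your invocation of the Pythagorean identity in the analysis of Algorithm \ref{gambletsolveexlinsysloc} is not available as stated: the \emph{localized} spaces $\W^{(k),\z,\loc}$ are not $\<\cdot,\cdot\>_\z$-orthogonal (only the exact $\W^{(k),\z}$ are), and ``orthogonality up to $O(\epsilon)$'' does not by itself give a Pythagorean inequality. You do not need it, however: since $\|\sum_i (\tilde w_i - w_i)\chi_i^{(k),\z,\loc}\|_\z = |\tilde w - w|_{B^{(k),\z,\loc}} \le \epsilon\|g\|_{H^{-1}}/(2\q)$ at each of $\q$ levels, a straight triangle inequality already gives the $\epsilon\|g\|_{H^{-1}}/2$ you want. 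Second, your heuristic for the exponent $3-k+kd/2$ is the right bookkeeping idea (the $H^{kd/2}$ comes from $|\I^{(k-1)}|\sim H^{-(k-1)d}$ when summing per-column errors, the $H^{-k}$ from $\lambda_{\max}(A^{(k),\z,\loc})^{1/2}$, and the residual powers from the condition-number and localization slack), but the place where care is genuinely required is the \emph{recursive} propagation: the error in $D^{(k,k-1),\z,\loc}$ contaminates $A^{(k-1),\z,\loc}$, hence $B^{(k-1),\z,\loc}$, hence the \emph{next} localized solve at level $k-1$, and so on down to level $1$. You must show these errors add rather than compound geometrically; the mechanism in \cite{OwhadiMultigrid:2015} is to bound $\|\tilde\psi_i^{(k),\z,\loc}-\psi_i^{(k),\z,\loc}\|_\z$ by an absolute quantity at each level and prove the level-$k$ contribution to the level-$1$ error is $O(\epsilon/k^2)$, so that $\sum_k 1/k^2$ closes the argument. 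Your ``telescoping factor'' comment gestures at this but you should make the induction explicit.
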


Observe that theorems \ref{tmdiscrete} and  \ref{tmdiscreteaccuracy} imply that (1) the complexity of Algorithm \ref{fastgambletsolve} is $\mathcal{O}\big(N \big(\ln \max (\frac{1}{\epsilon},N^\frac{1}{d})\big)^{3d}\big)$ (2) the complexity of Algorithm \ref{gambletsolveexlinsysloc} is $\mathcal{O}\big(N \big(\ln \max (\frac{1}{\epsilon},N^\frac{1}{d})\big)^{d} \ln \frac{1}{\epsilon}\big)$. Therefore if $\epsilon$ corresponds to a grid size accuracy $N^{-1/d}$ (in $H^1$-norm) then the complexity of Algorithm \ref{fastgambletsolve}  is $\mathcal{O}(N \ln^{3d} N)$ and that of Algorithm \ref{gambletsolveexlinsysloc} is $\mathcal{O}(N \ln^{d+1} N)$. In Figure \ref{fig:complexity}, we show the elapsed time of localized Gamblet transform and localized gamblet linear solve for fixed $\rho_k = 3$ with respect to the degrees of freedom $N$. Although our implementation is in Matlab and it is not optimal in terms of efficiency, we can still observe close to linear complexity. 

\begin{figure}[H]
\begin{center}
\includegraphics [scale=0.3]{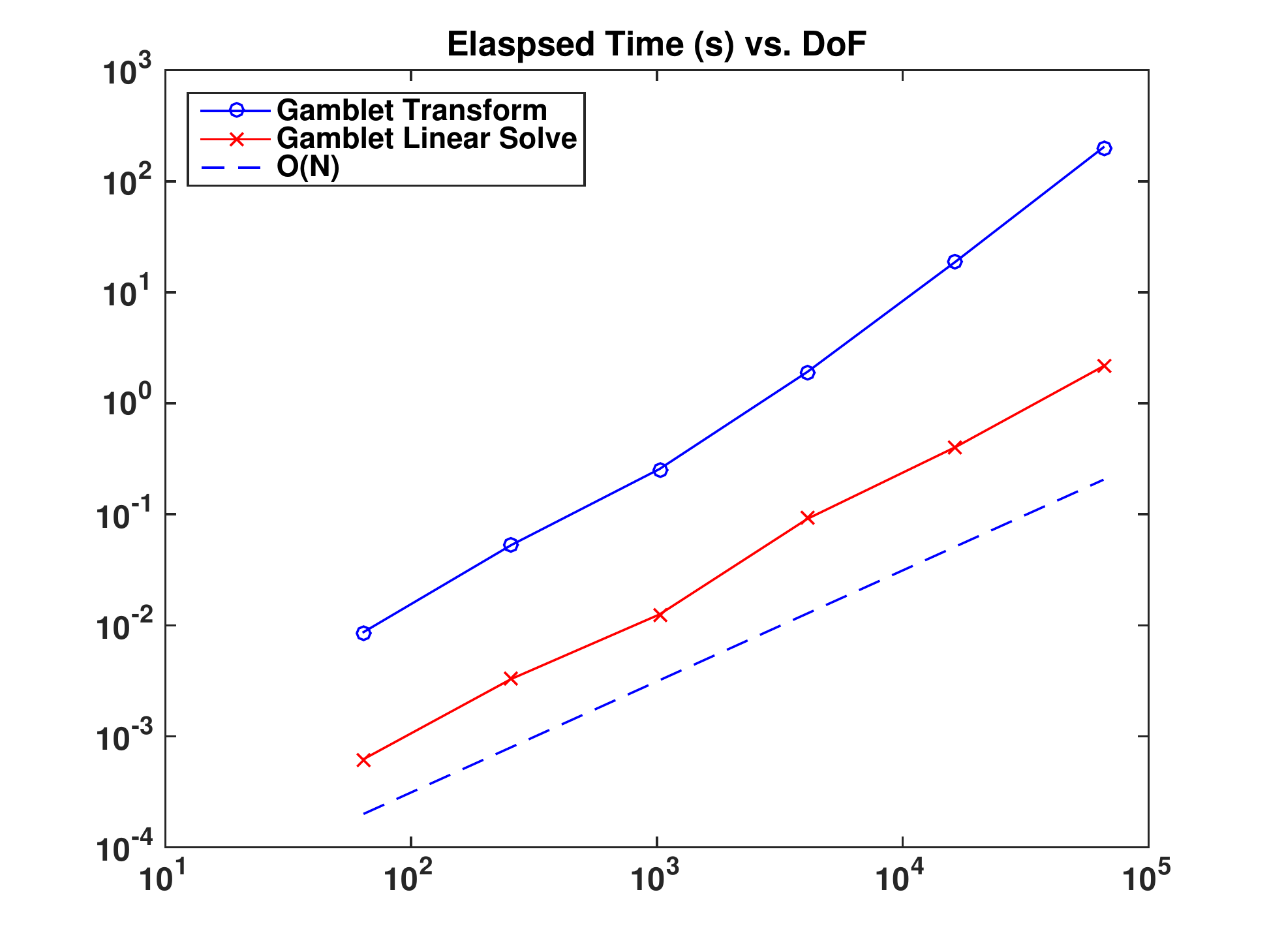}
\end{center}
\caption{Elapsed time (sec) vs. DoF ($N$) for localized gamblet transform and localized gamblet linear solve, $\rho_k=3$ for all $k$.}
\label{fig:complexity}
\end{figure}

\section{The wave PDE with rough coefficients}
\label{secwavepde}

Consider the following prototypical wave PDE with rough coefficients,
\begin{equation}\label{eqn:scalarwave}
\begin{cases}
    \mu(x)\partial_t^2 u(x,t) -\diiv \big(a(x)  \nabla u(x,t)\big)=g(x,t) \quad  x \in \Omega; \\
    u(x,0)=u_0(x) \quad \text{on}\quad \Omega,\\
    \partial_t u(x,0)=v_0(x) \quad \text{on}\quad \Omega,\\
    u(x,t)=0 \quad \text{on}\quad \partial\Omega\times [0,T]
    \end{cases}
\end{equation}
where the domain $\Omega$ and the coefficients $\mu(x)$ and $a(x)$ are as in \eqref{eqnscalarzeta}
(i.e. in $L^\infty(\Omega)$ and satisfy \eqref{eqaineq} and \eqref{eqineqmu}).

Let  $(\varphi_i)_{i\in \N}$ be a finite-dimensional (finite-element) basis of $H^1_0(\Omega)$ and write $\V=\Span\{\varphi_i\mid i\in \N\}$. Let $\tilde{u}(x,t)=\sum_{i\in \N} q_i(t)\varphi_i(x)$ be the finite-element solution of \eqref{eqn:scalarwave} in $\V$  and assume that the elements $(\varphi_i)_{i\in \N}$ are chosen to satisfy \eqref{eqhhgfff65f} and \eqref{eqhhgfff65fip} and so that $(\tilde{u}(x,t))_{0\leq t \leq T}$ is a \textit{good enough} approximation of the solution $(u(x,t))_{0\leq t \leq T}$  of \eqref{eqn:scalarwave}. Let $N=|\N|$ be the cardinal of $\N$ (and the dimension of $\V$). Let $M$ and $K$ be the $\N\times \N$ mass and stiffness matrices $M_{i,j}=\int_{\Omega}\varphi_i \varphi_j \mu$ and $K_{i,j}=\int_{\Omega}(\nabla \varphi_i)^T a \nabla \varphi_j$.

Recall that the vector $q\in \R^\db$ is the solution of the forced Hamiltonian system
\begin{equation}\label{eqhdsgjhdgdfi}
\begin{cases}
\dot{q}&=M^{-1}p\\
\dot{p}&=-K q+f
\end{cases}
\end{equation}
where for $i\in \N$, $f_i(t):=\int_{\Omega}\varphi_i g(x,t)$, $q_0=q(0)$ corresponds to the coefficients of $u_0$ in the $\varphi_i$ basis and $p_0=p(0)$ is the $\N$-vector defined by $p_{0,i}:=\int_{\Omega} \varphi_i v_0(x) \mu$.

\subsection{Implicit midpoint rule}
\label{sec:implicitmidpoint}
A popular time-discretization of \eqref{eqhdsgjhdgdfi} is the implicit midpoint rule \cite{HairerLubichWanner06}, which is unconditionally stable (A-stable, i.e. its region of absolute stability includes the entire complex half-plane with negative real part), symplectic, symmetric (time-reversible) and preserves quadratic invariants  exactly \cite{HairerLubichWanner06}.
For example, when $f=0$, (exactly preserved) quadratic invariants of \eqref{eqhdsgjhdgdfi} include the total energy ($E=\frac{1}{2}p^T M^{-1}p+\frac{1}{2}q^T K q$) and the energy of each vibration mode ($E_i=|Q_i^T M^{-1}p|^2\frac{1}{2}Q_i^T M Q_i+| Q_i^T q|^2\frac{1}{2}Q_i^T K Q_i$ with $\lambda_i M Q_i=K Q_i$).

Writing $q_n$ the numerical approximation of $q(n \dt)$, $p_n$ the numerical approximation of $p(n \dt)$, and $f_n:=f(n \dt)$, recall that the implicit midpoint time discretization of \eqref{eqhdsgjhdgdfi} is
\begin{equation}\label{implicitmidpoint}
\begin{cases}
q_{n+1}&=q_n+\dt M^{-1}\frac{p_n+p_{n+1}}{2}\\
p_{n+1}&=p_n- \dt K \frac{q_n+q_{n+1}}{2}+\dt f_{n+\frac{1}{2}}
\end{cases}
\end{equation}

Note that  \eqref{implicitmidpoint}  can be written
\begin{equation}\label{eqimeximpl}
\begin{cases}
(M+\frac{(\dt)^2}{4} K) q_{n+1} &=(M-\frac{(\dt)^2}{4} K)q_n+\dt p_n+\dt^2 \frac{f_{n+\frac{1}{2}}}{2} \\
p_{n+1}&=p_n- \dt K \frac{q_n+q_{n+1}}{2}+\dt f_{n+\frac{1}{2}}
\end{cases}
\end{equation}
Let $u_n(x):=\sum_{i\in \N} q_{n,i} \varphi_i(x)$
and
$v_n(x):= \sum_{i\in \N} (M^{-1} p_n)_i  \varphi_i$
be the corresponding approximations of $u(x,n\dt)$ and $\partial_t u(x,n\dt)$.
 Observe that $p_{n,i}=\int_{\Omega}\varphi_i v_n \mu$ and
 solving \eqref{eqimeximpl}  is equivalent to obtaining the finite element solution (in $\V$) of
\begin{equation}\label{eqimeximplzlocfem}
\begin{cases}
\frac{4}{\dt^2}\mu u_{n+1} -\diiv \big(a  \nabla u_{n+1}\big) &=\frac{4}{(\dt)^2} \mu u_{n}+\diiv \big(a  \nabla u_{n}\big)+\frac{4}{\dt} \mu v_n+2 g_{n+\frac{1}{2}} \\
\mu v_{n+1}&=\mu v_n^{\z,\loc}+\dt \diiv \big(a  \nabla\frac{u_n+u_{n+1}}{2}\big)+\dt g_{n+\frac{1}{2}}
\end{cases}
\end{equation}
with $g_n(x):=g(x,n\dt)$.

\subsection{Acceleration of the midpoint rule with $\z$-gamblets}\label{subsecaccmidp}
To achieve near linear complexity in the implementation of the midpoint rule we will perform the inversion of the implicit system in \eqref{eqimeximpl} or \eqref{eqimeximplzlocfem} in a localized $\z$-gamblet basis with $\z=\dt$. Write $(q_n^{\app},p_n^{\app})$ the output of the corresponding Algorithm
 \ref{wavequalgfemaltmix}.

\begin{algorithm}[!ht]
\caption{Implicit midpoint rule with localized gamblets.}\label{wavequalgfemaltmix}
\begin{algorithmic}[1]
\STATE Set $\z=\dt$ and  $\epsilon$ as in Theorem \ref{thmdiedhduw}.
\STATE Compute $\chi_i^{(k),\z,\loc}, \psi_i^{(k),\z,\loc}, B^{(k),\z,\loc}, R^{(k,k-1),\z,\loc}$ with Algorithm \ref{fastgambletsolve}.
\STATE  $q_0^{\app}:=q_0$ and $p_{0}^{\app}:=p_0$.
\FOR{$n=0$ to $T/\dt-1$}
\STATE\label{linfemwavegx} Solve $(M+\frac{(\dt)^2}{4} K) q_{n+1}^{\app} =(M-\frac{(\dt)^2}{4} K)q_n^{\app}+\dt p_n^{\app}+\frac{\dt^2}{2} f_{n+\frac{1}{2}}$ with Algorithm \ref{gambletsolveexlinsysloc}  \COMMENT{$f_{n+\frac{1}{2},i}:= \int_{\Omega}\varphi_i g(x,(n+\frac{1}{2})\dt)$}
\STATE\label{linfemwavegx2} $p_{n+1}^{\app} =p_n^{\app}- \dt K \frac{q_n^{\app}+q_{n+1}^{\app}}{2}+\dt f_{n+\frac{1}{2}}^{\app}$.
\ENDFOR
\end{algorithmic}
\end{algorithm}

Write $u^\app_n:= \sum_{i\in \N} q^{\app}_{n,i} \varphi_i$ and $v^\app_n:=\sum_{i\in \N} (M^{-1} p_n^{\app})_i  \varphi_i$. Observe that
Line \ref{linfemwavegx} of Algorithm  \ref{wavequalgfemaltmix} is  equivalent to solving
\begin{equation}\label{eqequiua}
\frac{4}{\dt^2}\mu u_{n+1}^\app -\diiv \big(a  \nabla u_{n+1}^\app\big) =\frac{4}{(\dt)^2} \mu u_{n}^\app+\diiv \big(a  \nabla u_{n}^\app\big)+\frac{4}{\dt} \mu v_n^\app+2 g_{n+\frac{1}{2}}
\end{equation}
in $\V$ with Algorithm \ref{gambletsolveexlinsysloc}. Note that $u_{n+1}^\app\in \V$ and the equality \eqref{eqequiua} is defined in the finite-element sense after integration against $\varphi\in \V$. Note also that (1)  $p_{n,i}^{\app}=\int_{\Omega} \varphi_i v_n^\app \mu$,
(2) $v^\app_n$ is an approximation of $\partial_t u(x,n\dt)$, (3)  $(q_n^{\app})^T K q_n^{\app}=\int_{\Omega}(\nabla u_n^\app)^T a \nabla u_n^\app$,
(4) $(p_n^{\app})^T M^{-1} p_n^{\app}=\int_{\Omega} (v_n^\app)^2 \mu$.

The following theorem, whose proof is given in Subsection \ref{secproof1} of the appendix,  provides a priori error estimates on the accuracy of Algorithm \ref{wavequalgfemaltmix} using the exact midpoint rule solution as a reference. We assume for the clarity of those error estimates, without loss of generality, that $\Delta t \leq 1$ and write $\|g\|_{L^\infty(0,T,L^2(\Omega))})$ the essential supremum of $\|g(\cdot,t)\|_{L^2(\Omega)}$ over $t\in (0,T)$.
\begin{Theorem}\label{thmdiedhduw}
Let $u^\app_n$ and $v^\app_n$ be the output of Algorithm \ref{wavequalgfemaltmix} and
$u_n$, $v_n$ be the solution of the implicit midpoint time discretization of
\eqref{eqimeximplzlocfem} (or equivalently \eqref{eqimeximpl}) with time-step $\dt$.
If $\epsilon$ in Algorithm \ref{wavequalgfemaltmix} satisfies $\displaystyle\epsilon \leq C^{-1}\frac{1}{T}\dt^3 h (\dt+h) $, then for $n\dt \leq T$ we have
\begin{equation}
\begin{split}
\|u_n-u^\app_n\|_{H^1_0(\Omega)}&+\|v_n-v^\app_n\|_{L^2(\Omega)}\leq C (\Delta t)^2 \\&
\big(  \|u_0\|_{H^1_0(\Omega)}+\|v_0\|_{L^2(\Omega)}+ T   \|g\|_{L^\infty(0,T,L^2(\Omega))})\big)
\end{split}
\end{equation}
\end{Theorem}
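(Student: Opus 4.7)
I would measure the error $(a_n, b_n) := (q_n - q_n^{\app},\, p_n - p_n^{\app})$ in the discrete energy $\mathcal E_n := \tfrac12 b_n^T M^{-1} b_n + \tfrac12 a_n^T K a_n$ and exploit the exact quadratic-invariant preservation of the implicit midpoint rule. Subtracting the exact midpoint iteration from the one executed by Algorithm~\ref{wavequalgfemaltmix} shows that $(a_n, b_n)$ satisfies precisely the unforced linear midpoint recursion, perturbed at each step only by the defect $\rho_n$ introduced by the localized gamblet solve in Line~\ref{linfemwavegx}. Writing $s_n := (M + \tfrac{\dt^2}{4}K)^{-1}\rho_n$ for the per-step solve error, Theorem~\ref{tmdiscrete} (applied to this sub-problem with $\zeta = \dt$) gives $\|s_n\|_\zeta \leq \epsilon\,\|g_n^{\rm RHS}\|_{H^{-1}(\Omega)}$, where $g_n^{\rm RHS}$ is the continuous right-hand side in \eqref{eqimeximplzlocfem}; since $a_0 = b_0 = 0$, all error originates from the accumulation of the $\rho_n$'s.

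A direct computation using the symmetries of $K$ and $M^{-1}$ and the midpoint structure yields the energy identity
\[
\mathcal E_{n+1} - \mathcal E_n \;=\; \tfrac12(a_{n+1}+a_n)^T K M^{-1}\rho_n,
\]
so $\mathcal E$ is conserved when $\rho_n = 0$. Cauchy--Schwarz and the standard telescoping $S_{n+1}^2 - S_n^2 \leq (S_n+S_{n+1})\eta_n \Rightarrow S_{n+1}-S_n \leq \eta_n$ for $S_n := \sqrt{\mathcal E_n}$ then give $\sqrt{\mathcal E_N} \leq \tfrac{1}{\sqrt 2}\sum_{n=0}^{N-1}\|M^{-1}\rho_n\|_K$. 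A spectral analysis in the basis $K Q_i = \lambda_i M Q_i$ with $y_i := \dt^2\lambda_i/4$ gives $\|M^{-1}\rho_n\|_K^2 \leq \sup_i y_i(1+y_i)\,\|s_n\|_\zeta^2$, and the finite-element inverse inequality $\lambda_{\max}(M^{-1}K) \leq Ch^{-2}$ yields $\|M^{-1}\rho_n\|_K \leq C\dt(\dt+h)h^{-2}\|s_n\|_\zeta$; the factor $(\dt+h)$ smoothly interpolates between the CFL regime $\dt\leq h$ (where $\sqrt{y_*}\sim \dt/h$ dominates) and the implicit regime $\dt\geq h$ (where $y_*\sim \dt^2/h^2$ dominates).

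For the residual size, pair $g_n^{\rm RHS}$ against $\psi \in H^1_0(\Omega)$ with $\|\nabla\psi\|_{L^2}\leq 1$: Cauchy--Schwarz, the $L^\infty$ bounds on $a,\mu$, and Poincar\'e give $\|g_n^{\rm RHS}\|_{H^{-1}} \leq C\bigl(\dt^{-2}\|u_n^{\app}\|_{L^2}+\|u_n^{\app}\|_a+\dt^{-1}\|v_n^{\app}\|_{L^2}+\|g(\cdot,t_{n+\tfrac12})\|_{L^2}\bigr)$. A discrete energy estimate for the scheme itself (the same symplectic identity applied to Algorithm~\ref{wavequalgfemaltmix} with external forcing, closed by a bootstrap under the smallness of $\epsilon$) bounds $\|u_n^{\app}\|_a + \|v_n^{\app}\|_{L^2(\mu)}$ uniformly in $n\leq T/\dt$ by $C\mathcal G := C(\|u_0\|_{H^1_0(\Omega)}+\|v_0\|_{L^2(\Omega)}+T\|g\|_{L^\infty(0,T,L^2(\Omega))})$; since $\dt\leq 1$ the $\dt^{-2}$ term dominates, so $\|g_n^{\rm RHS}\|_{H^{-1}} \leq C\mathcal G/\dt^2$. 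Putting the pieces together,
\[
\sqrt{\mathcal E_N}\;\leq\;\frac{T}{\dt}\cdot\frac{C\dt(\dt+h)}{h^2}\cdot\frac{\epsilon\,\mathcal G}{\dt^2}\;=\;\frac{C\,T\,\epsilon\,\mathcal G\,(\dt+h)}{\dt^2\,h^2},
\]
and the hypothesis $\epsilon \leq C^{-1}T^{-1}\dt^3 h(\dt+h)$ makes this $\leq C\dt^2\mathcal G$; converting $\sqrt{\mathcal E_N}$ to the $H^1_0$ and $L^2$ norms via coercivity of $a$ and positivity of $\mu$ closes the proof. The main obstacle is the uniform-in-regime spectral amplification bound (which must handle both $\dt\leq h$ and $\dt\geq h$ in a single estimate, producing precisely the factor $(\dt+h)$ in the condition on $\epsilon$) together with closing the a-priori energy bound on $(u_n^{\app}, v_n^{\app})$ by bootstrap.
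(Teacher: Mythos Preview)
Your approach is essentially the paper's proof. The paper also introduces the discrete error energy, derives the same identity $E^{\err}_{n+1}-E^{\err}_n=\tfrac12(q^{\err}_{n+1}+q^{\err}_n)^TK\,s_n$ (your $s_n$ and the paper's $M^{-1}\rho_n$ coincide), telescopes to $|\sqrt{E^{\err}_{n+1}}-\sqrt{E^{\err}_n}|\le 2^{-1/2}|s_n|_K$, and then bounds $|s_n|_K$ in terms of $\epsilon$ times the $K^{-1}$-norm of the right-hand side $b_n$, using the mesh eigenvalue bounds $\lambda_{\max}(K)\le CN^{-1}h^{-2}$, $\lambda_{\min}(M)\ge C^{-1}N^{-1}$ to produce exactly your factor $\dt(\dt+h)/h^2$.

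Two cosmetic differences are worth noting. First, you obtain the amplification factor $\sup_i y_i(1+y_i)$ via the generalized eigenbasis $KQ_i=\lambda_i MQ_i$; the paper instead writes the cruder chain $|s_n|_K^2\le \lambda_{\max}(K)\lambda_{\max}(B)\lambda_{\min}(M)^{-2}\,s_n^TMB^{-1}Ms_n$, which yields the same bound. Second, you propose to control $\|u_n^{\app}\|_a+\|v_n^{\app}\|_{L^2}$ directly by a bootstrap; the paper instead splits $\sqrt{E_n^{\app}}\le\sqrt{E_n}+\sqrt{E_n^{\err}}$, bounds the exact-midpoint energy $\sqrt{E_n}$ by a separate lemma (Lemma~\ref{lemguyg66g8}), and absorbs the resulting feedback term $z\sqrt{E_n^{\err}}$ with $z=C(\dt+h)\epsilon/(\dt h^2)$ via discrete Gr\"onwall $(1+z)^n\le e$. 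Your bootstrap and the paper's Gr\"onwall step are doing the same work under the same smallness hypothesis on $\epsilon$.
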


\subsection{Numerical experiments}
Let $a(x)$ be defined as in Example \ref{egadef}, $g(x,t) = \sin(2\pi(t+x_1))\cos(2\pi(t+x_2))$, $u(x,0) = 0$, and $u_t(x,0) = \sin(2\pi x_1)\cos(2\pi x_2)$. The reference solution is computed using bilinear finite-elements $\{\varphi_i\mid i \in \N\}$, and \textit{Matlab} built-in integrator \textit{ode15s} with time step $dt = 1/1280$. We test the performance/accuracy of exact gamblets and localized gamblets adapted to the implicit midpoint rule. We compute  numerical solutions up to time $T = 1$. Figure \ref{fig:sol:wave} shows the reference solution, the numerical solution and the error of the numerical solution. The numerical solution is computed using localized gamblet for 2 stages Gauss-Legendre scheme (which will be introduced in Section \S~\ref{sec:gl2}) with 2 layers, namely, we take $nl:=\rho_k=2$ in Algorithm \ref{fastgambletsolve} for $k=1,\ldots,\q$ and we will keep using the notation $nl$ (number of layers) for the values of $\rho_k$, and $\dt = 0.1$.
\begin{figure}[H]
\begin{center}
\includegraphics [width=14cm, height=4cm]{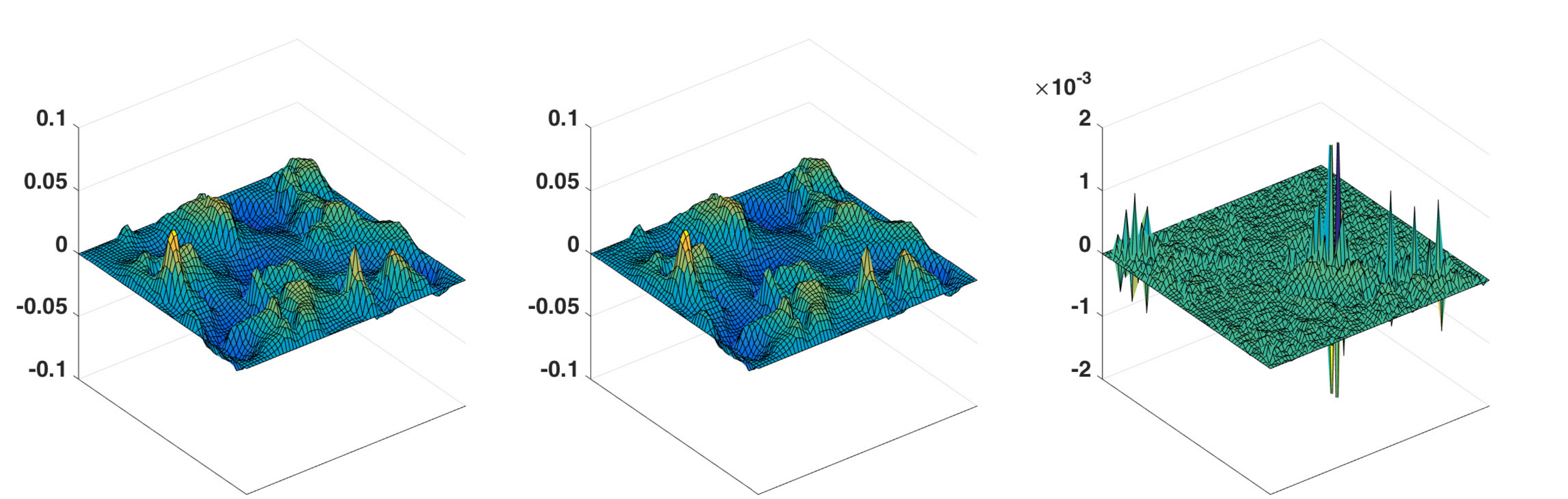}
\end{center}
\caption{Solutions at $T=1$. Left: Reference solution with the $\{\varphi_i\mid i \in \N\}$ basis, $dt = 1/1280$. Middle: numerical solution using localized gamblets with 3 layers for 2 stages Gauss-Legendre scheme, $\dt=0.1$. Right: The error of numerical solution.}
\label{fig:sol:wave}
\end{figure}
Figure \ref{fig:energy:wave} shows the relative error of the energy ($E=\frac{1}{2}p^T M^{-1}p+\frac{1}{2}q^T K q$) of gamblet solutions with respect to time and localization. The error for the gamblet solutions appears to be stable with respect to time for both the implicit midpoint scheme and the 2 stages Gauss-Legendre scheme if $nl>1$. When $nl\geq 3$ for implicit midpoint and $nl\geq 4$ for 2 stages Gauss Legendre, the localized gamblet solutions are almost as
accurate as the exact gamblet solutions.
\begin{figure}[H]
\centering
\includegraphics [width=6cm]{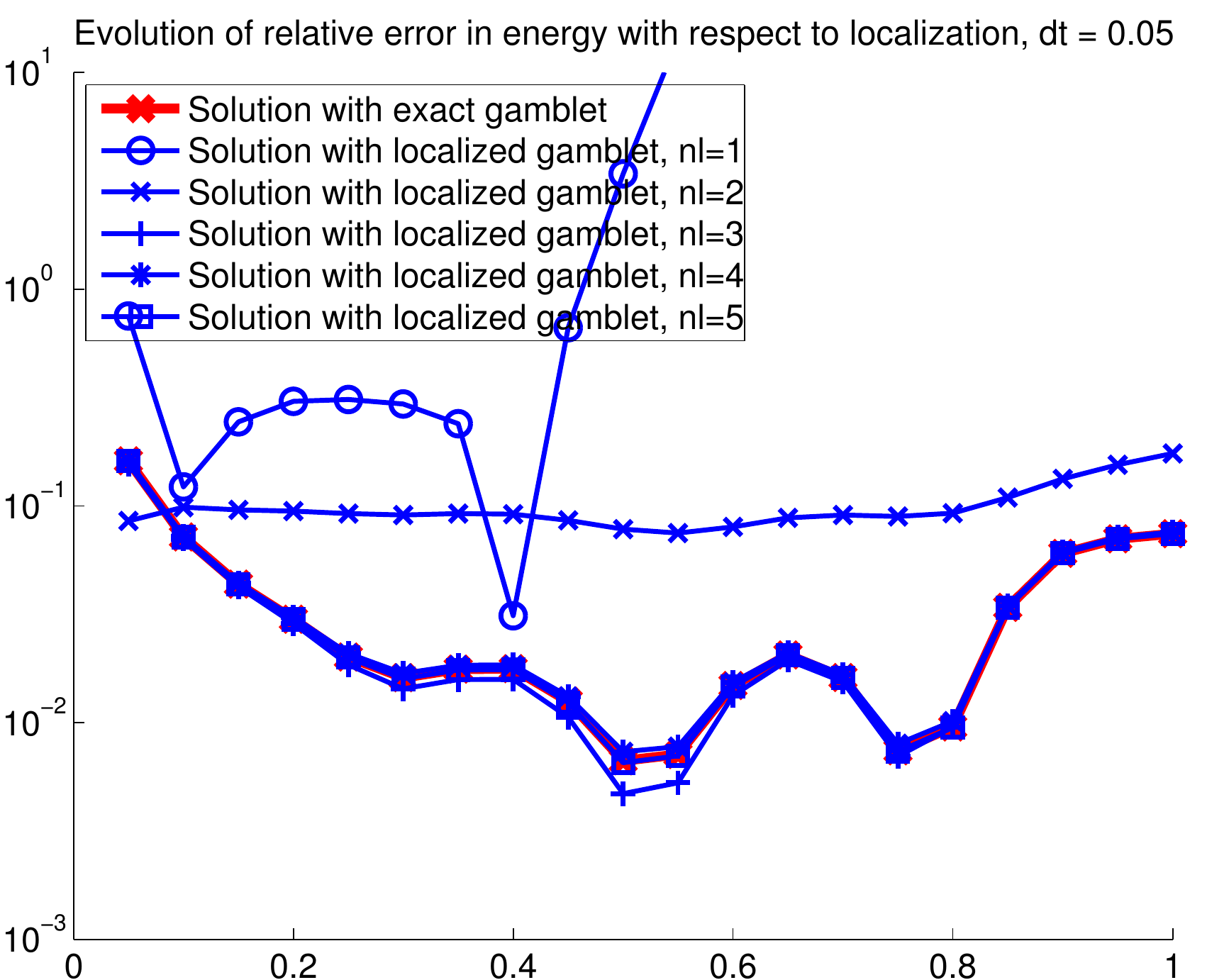}\quad
\includegraphics [width=6cm]{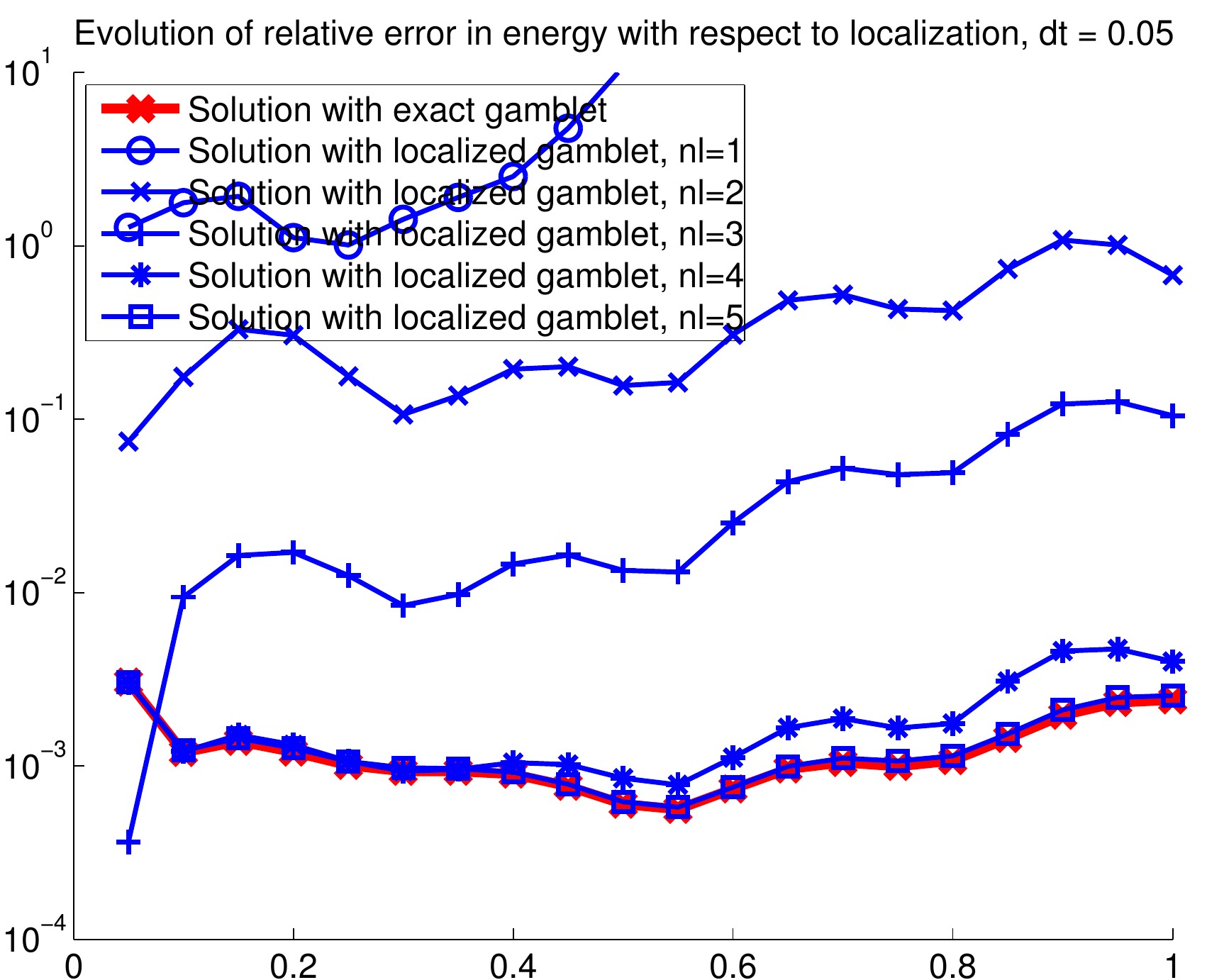}
\caption{Evolution of the relative error of the energy w.r.t localization: Left, implicit midpoint scheme; Right, 2 stages Gauss-Legendre scheme.}
\label{fig:energy:wave}
\end{figure}

Figure \ref{figure:band_i1wave_nl3} and Figure \ref{figure:band_i2wave_nl3} compare the components of the reference solution and the localized gamblet solution in each subband $\W^{(k)}$, with implicit midpoint scheme and 2 stages Gauss-Legendre scheme respectively. A phase shift error can be observed at high frequencies, and 2 stages Gauss-Legendre scheme has smaller phase error, even after localization.
\begin{figure}[H]
\centering
\includegraphics [width=10cm, height=6cm]{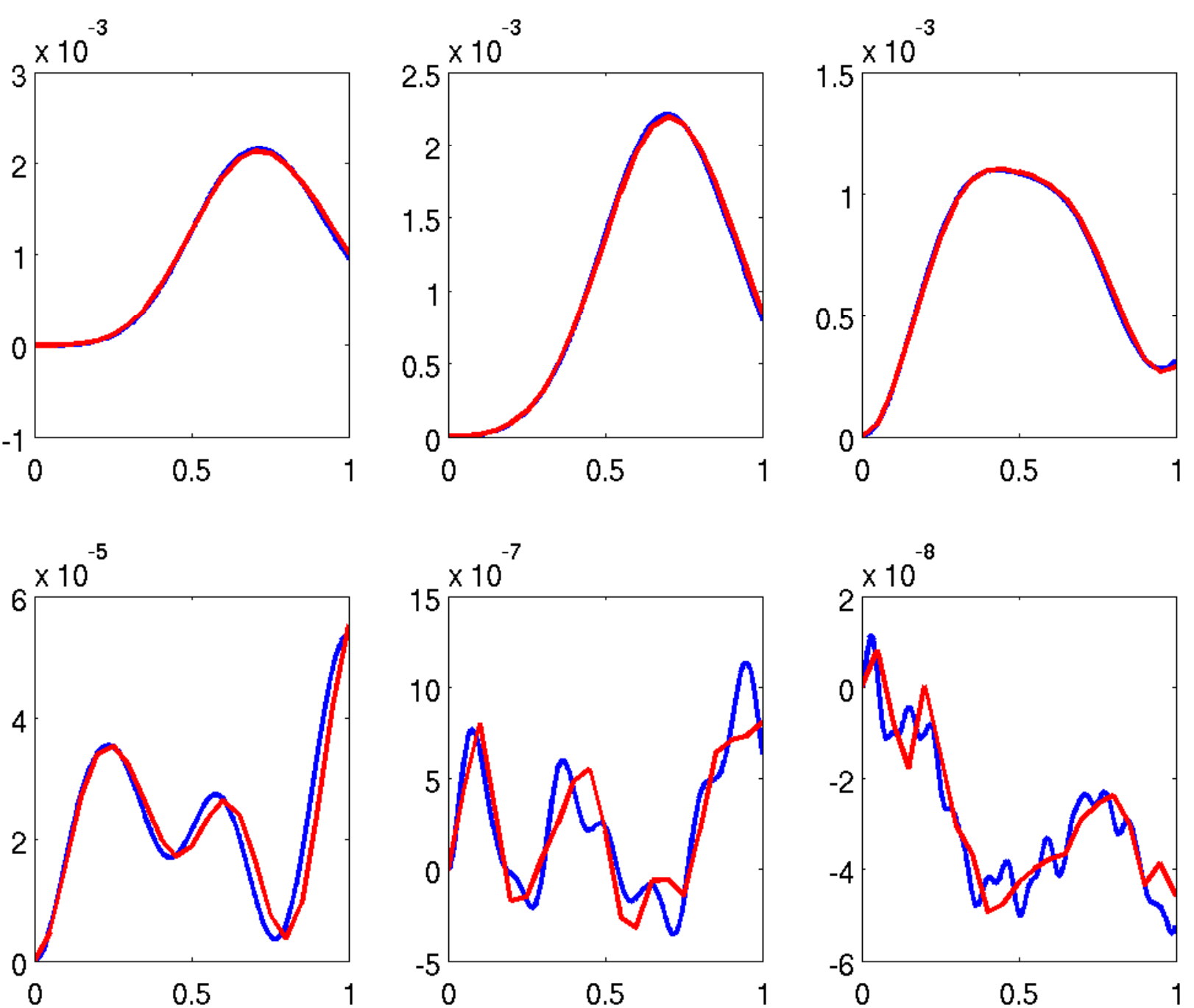}
\caption{Evolution of $\chi_1^{(k)}$ component in subband $\W^{(k)}$, $k = 1, \cdots, 6$, with localization parameter $nl=3$, using implicit midpoint scheme. The blue curve is for the reference solution, and the red curve is for the localized gamblet solution.}
\label{figure:band_i1wave_nl3}
\end{figure}
\begin{Remark}
Figures \ref{figure:band_i1wave_nl3} and \ref{figure:band_i2wave_nl3} show that the multiresolution decomposition of the solution space performed by gamblets is analogous to a eigensubspace decomposition: the coefficients of the solution in $\W^{(k),\z}$ evolve slowly for $k$ small and fast for $k$ large. Furthermore, these coefficients are robust to perturbations in initial conditions and dispersion errors for $k$ small and sensitive for $k$ large. Therefore gamblets decompose the the solution into components characterized by a hierarchy of levels of robustness.
Furthermore, as done with wavelets \cite{cohen2002adaptive},
gamblet refinement could  be used as an alternative to adaptive mesh refinement near singularities.
\end{Remark}

\begin{figure}[H]
\centering
\includegraphics [width=10cm, height=6cm]{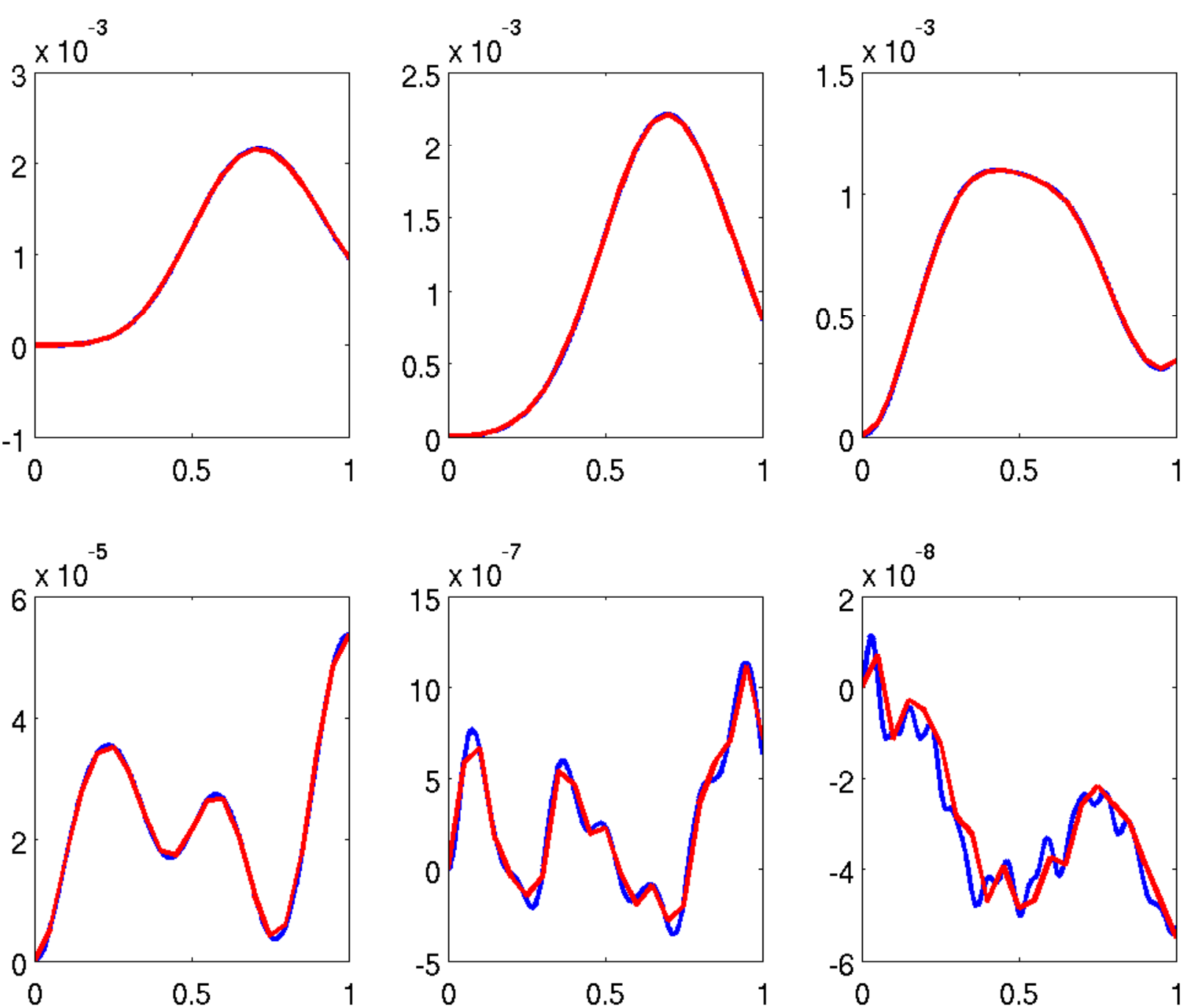}
\caption{Evolution of $\chi_1^{(k)}$ component in subband $\W^{(k)}$, $k = 1, \cdots, 6$, with localization parameter $nl=3$, using 2 stages Gauss Legendre scheme. The blue curve is for the reference solution, and the red curve is for the localized gamblet solution.}
\label{figure:band_i2wave_nl3}
\end{figure}

 Figure \ref{fig:h1errorwrttime:wave}  shows the $H^1$ and $L^2$ errors at $T=1$ with respect to localization for  localized gamblet solutions, with time step $\dt = 0.025$. This shows that for fixed spatial resolution, the errors get saturated after reaching a critical $nl$. For example, we can choose $nl=3$ as this critical value for the results shown in Figure \ref{fig:h1errorwrttime:wave} .
\begin{figure}[H]
    \begin{subfigure}[b]{0.5\textwidth}
        \includegraphics[width=\textwidth]{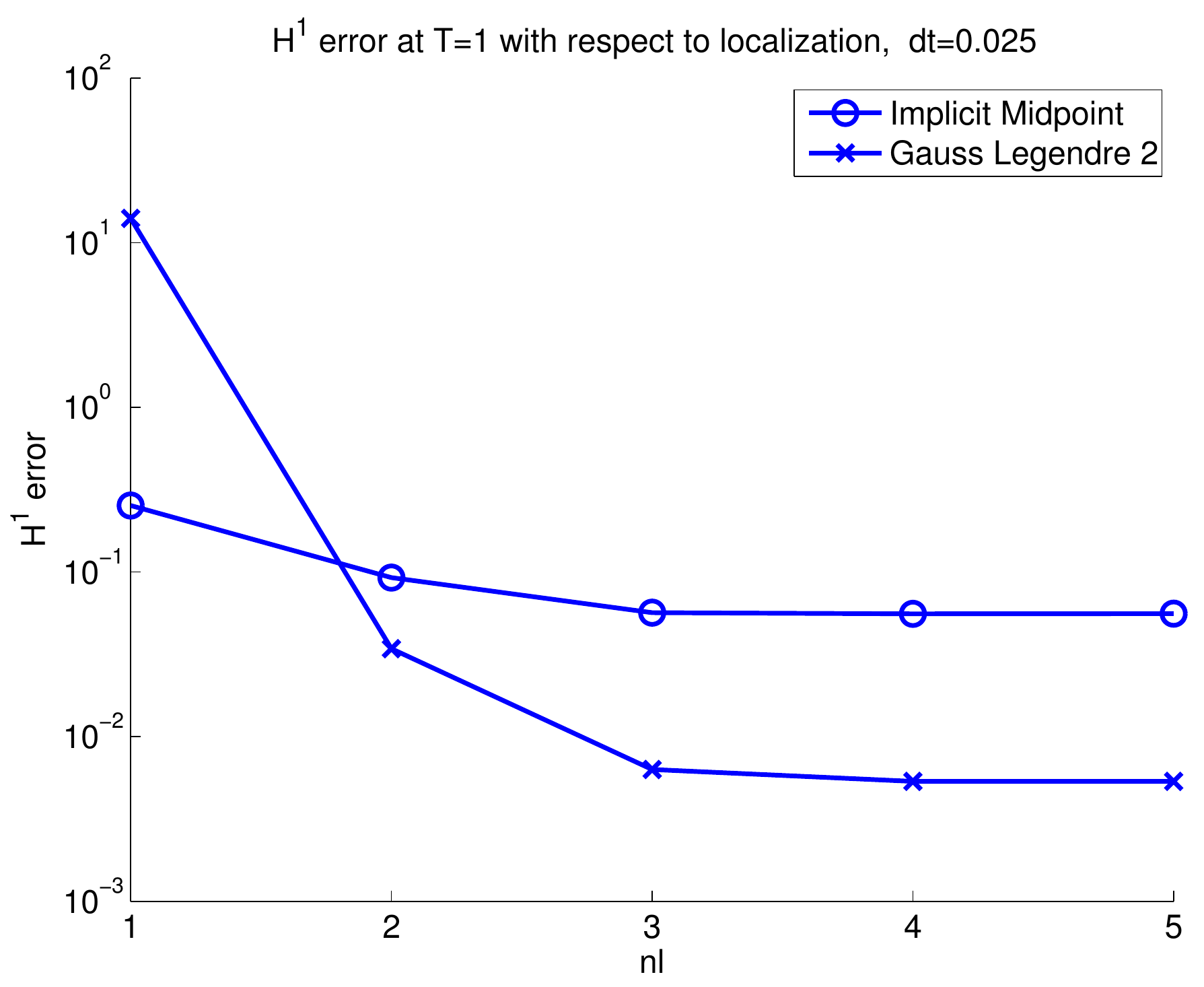}
    \end{subfigure}
    \begin{subfigure}[b]{0.5\textwidth}
        \includegraphics[width=\textwidth]{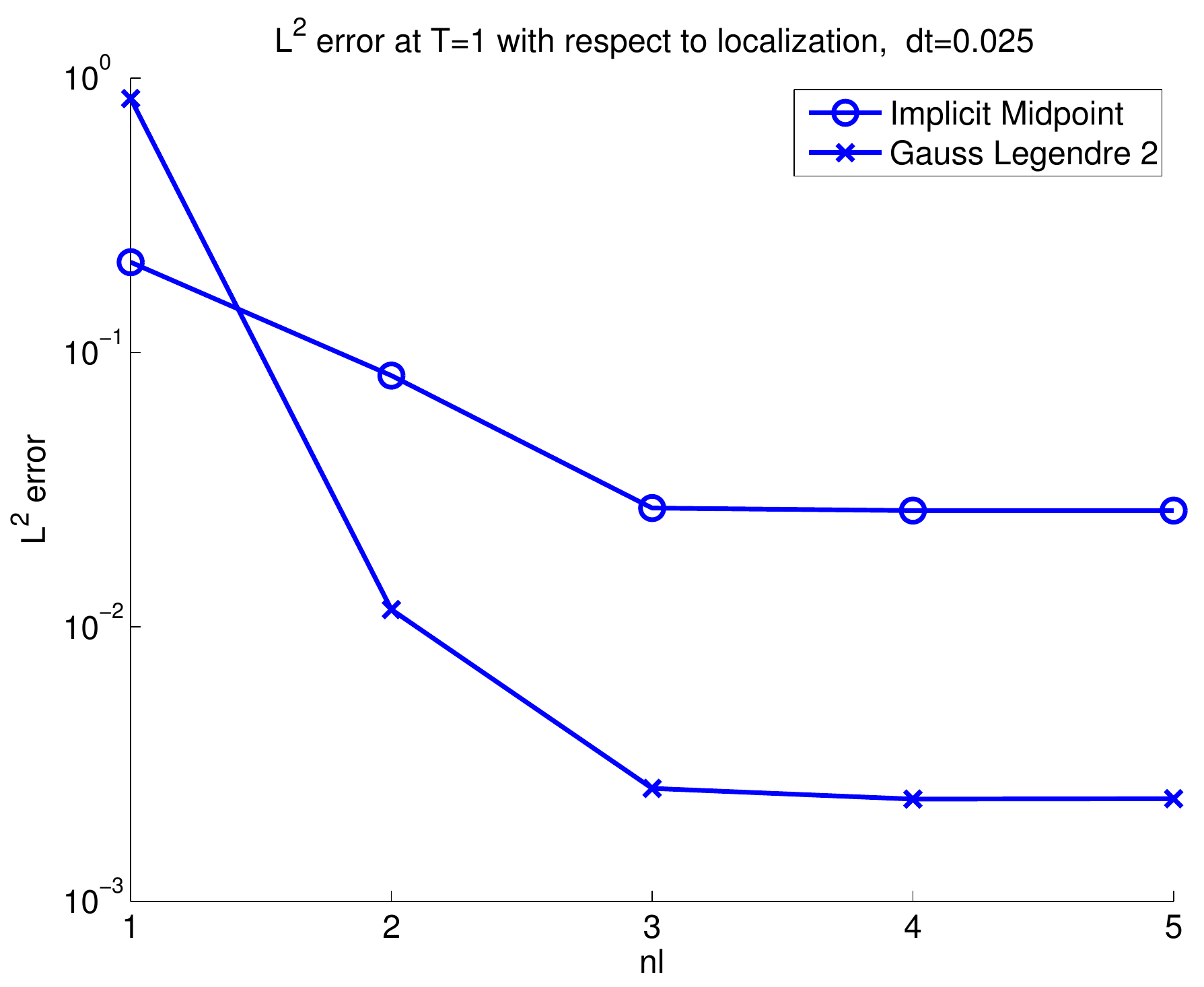}
    \end{subfigure}
    \caption{Error for localized gamblet solutions at $T=1$: Left, $H^1$ error w.r.t localization; Right, $L^2$ error w.r.t. localization.}\label{fig:h1errorwrttime:wave}
\end{figure}
 Figure \ref{fig:h1errorat1:wave} shows the  $H^1$ and $L^2$ errors  for the solution with exact gamblets and localized gamblets ($nl=2$ or $3$) using implicit midpoint scheme and 2 stages Gauss Legendre scheme at time $T=1$, and time steps  $\dt = 1/10, 1/20, 1/40, 1/80, 1/160, 1/320$.
 \begin{figure}[H]
    \begin{subfigure}[b]{0.5\textwidth}
        \includegraphics[width=\textwidth]{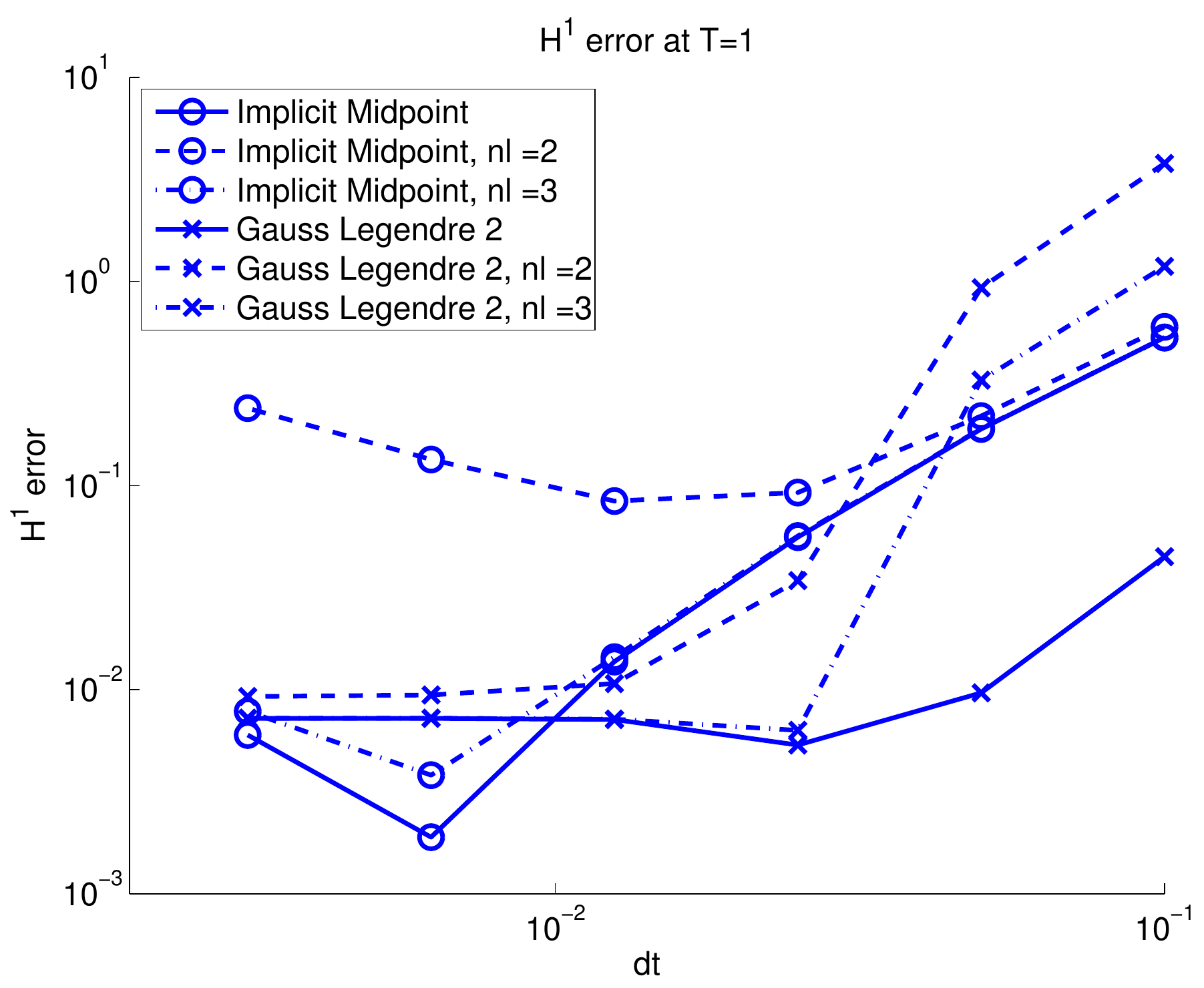}
    \end{subfigure}
    \begin{subfigure}[b]{0.5\textwidth}
        \includegraphics[width=\textwidth]{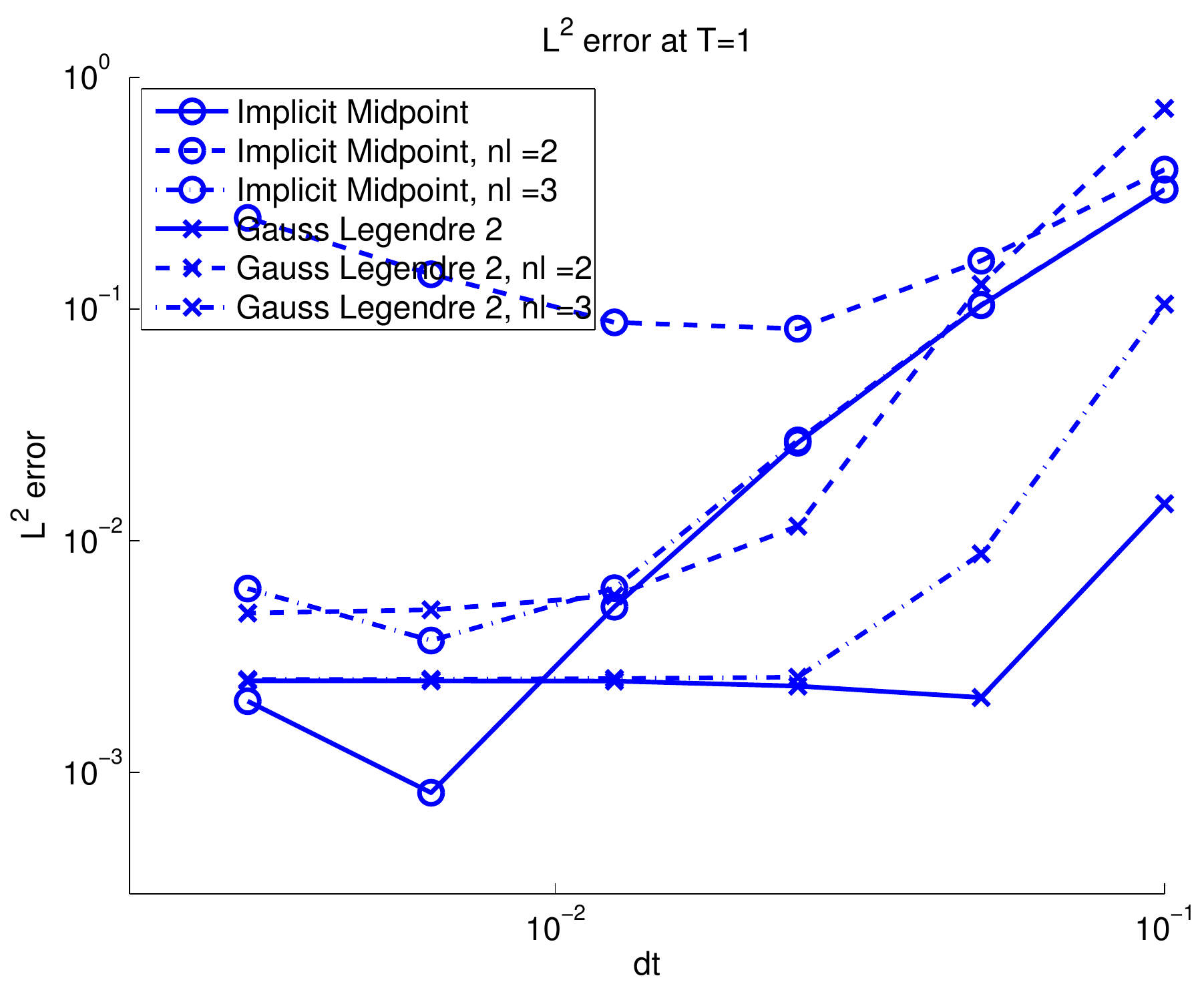}
    \end{subfigure}
    \caption{Left: $H^1$ error at time $T=1$. Right: $L^2$  error at time $T=1$.}
    \label{fig:h1errorat1:wave}
\end{figure}

The 2 stages Gauss Legendre scheme shows better performance for larger time steps and the localized gamblet solution is  close to the exact
gamblet solution for $nl=3$.

\section{The parabolic PDE with rough coefficients}
\label{secparabolic}
Consider the following prototypical example of the parabolic PDE with rough coefficients
\begin{equation}\label{eqn:parabolic}
\begin{cases}
    \mu(x)\partial_t u(x,t) -\diiv \big(a(x)  \nabla u(x,t)\big)=g(x,t) \quad  x \in \Omega; \\
    u(x,0)=u_0(x) \quad \text{on}\quad \Omega,\\
    u(x,t)=0 \quad \text{on}\quad \partial\Omega\times [0,T],
    \end{cases}
\end{equation}
where the domain $\Omega$ and the coefficients $\mu(x)$ and $a(x)$ are as in \eqref{eqnscalarzeta}
(i.e. in $L^\infty(\Omega)$ and satisfy \eqref{eqaineq} and \eqref{eqineqmu}).

As in Section \ref{secwavepde}, we consider $(\varphi_i)_{i\in \N}$  a finite-dimensional (finite-element) basis of $H^1_0(\Omega)$,  write $\tilde{u}(x,t)=\sum_{i\in \N} q_i(t)\varphi_i(x)$  the finite-element solution of \eqref{eqn:parabolic} in $\V$ (using the notations of Section \ref{secwavepde})
 and assume that the elements $(\varphi_i)_{i\in \N}$ are chosen to satisfy \eqref{eqhhgfff65f} and \eqref{eqhhgfff65fip} and so that $(\tilde{u}(x,t))_{0\leq t \leq T}$ is a \textit{good enough} approximation of the solution  of \eqref{eqn:parabolic}.

Recall that  $q$ is the solution of the ODE
\begin{equation}\label{eqn:parabolicFE}
    M \dot{q} + K q =f
\end{equation}
where $f$ is defined as in Section \ref{secwavepde} and $q_0=q(0)$ corresponds to the coefficients of $u_0$ in the $\varphi_i$ basis.

\subsection{Implicit-Euler time discretization}
The implicit Euler time-discretization of \eqref{eqn:parabolicFE} is
\begin{equation}\label{eqkjddkhsthimeul}
(M +\dt K) q_{n+1}=M q_n +\dt f_{n+1}
\end{equation}
Recall that implicit Euler is first-order accurate and, in
addition to being A-stable, is also  L-stable and B-stable (see Definition \ref{def:b-stability} of Section \ref{sec:appendix}) which are desirable for dissipative systems.

Let $u_n(x):=\sum_{i\in \N} q_{n,i} \varphi_i(x)$, be the corresponding approximation of $u(x,n \Delta t)$.
Observe that
 solving \eqref{eqkjddkhsthimeul}  is equivalent to obtaining the finite element solution (in $\V$) of
\begin{equation}\label{eqimeximplzlocfemp}
\frac{1}{\dt}\mu u_{n+1} -\diiv \big(a  \nabla u_{n+1}\big) =\frac{1}{\dt} \mu u_{n}+ g_{n+1}
\end{equation}
with $g_n(x):=g(x,n\dt)$.

As in Subsection \ref{subsecaccmidp}, to  achieve near linear complexity in the implementation of the implicit Euler method we will perform the inversion of implicit system in \eqref{eqkjddkhsthimeul} or  \eqref{eqimeximplzlocfemp}  in a localized $\z$-gamblet basis with $\z=2 \sqrt{\dt}$. Write $q_n^{\app}$  the output of the corresponding Algorithm
 \ref{alg:implicitEulerpara}.
\begin{algorithm}[!ht]
\caption{Implicit Euler with localized gamblets.}\label{alg:implicitEulerpara}
\begin{algorithmic}[1]
\STATE Set $\z=\dt$ and $\epsilon=\dt^3$.
\STATE Compute $\chi_i^{(k),\z,\loc}, \psi_i^{(k),\z,\loc}, B^{(k),\z,\loc}, R^{(k,k-1),\z,\loc}$ with Algorithm \ref{fastgambletsolve}.
\STATE  $q_0^{\app}:=q_0$  \COMMENT{$u_0^{\app}=u_0=\sum_{i\in \N} q_{0,i}\varphi_i$}.
\FOR{$n=0$ to $T/\dt-1$}
\STATE\label{linfempargx} Solve $(M+\dt K) q_{n+1}^{\app} =M q_n^{\app}+ f_{n+1}$ with Algorithm \ref{gambletsolveexlinsysloc}  \COMMENT{$f_{n,i}:= \int_{\Omega}\varphi_i g(x,n\dt)$}
\ENDFOR
\end{algorithmic}
\end{algorithm}
Write $u^\app_n:= \sum_{i\in \N} q^{\app}_{n,i} \varphi_i$. Observe that
Line \ref{linfempargx} of Algorithm  \ref{alg:implicitEulerpara} is  equivalent to solving
$\frac{1}{\dt}\mu u_{n+1}^{\app} -\diiv \big(a  \nabla u_{n+1}^{\app}\big) =\frac{1}{\dt} \mu u_{n}^{\app}+ g_{n+1}$
in $\V$ with Algorithm \ref{gambletsolveexlinsysloc}.

\begin{Theorem}
\label{thm:implicitEuler}
Let $u_n^\app$ be the output of Algorithm \ref{alg:implicitEulerpara}, and $u_n$ be the solution of implicit Euler time discretization of \eqref{eqn:parabolicFE} with time-step $\dt$. It holds true that for $n \leq T/\dt$,
\begin{equation}
	\|u_n^\app - u_n\|_{H^1_0(\Omega)}\leq C (\frac{T}{\dt})^2 \epsilon  \|g\|_{L^\infty(0, T, H^{-1}(\Omega))}
\end{equation}
where $\epsilon$ is the localization parameter in Algorithm \ref{alg:implicitEulerpara}.
\end{Theorem}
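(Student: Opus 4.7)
The plan is to propagate the error $E_n := u_n^{\app} - u_n$ from one time step to the next, using Theorem \ref{tmdiscrete} to control the per-step localization error and exploiting the contractive nature of the implicit-Euler step to control the propagation of past errors. To that end, introduce the auxiliary quantity $\tilde u_{n+1}$ defined as the exact (non-localized) solution of
\begin{equation*}
\tfrac{4}{\z^2}\mu\,\tilde u_{n+1} - \diiv\bigl(a\nabla \tilde u_{n+1}\bigr) = \tfrac{1}{\dt}\mu u_n^{\app} + g_{n+1},\qquad \z = 2\sqrt{\dt},
\end{equation*}
so that the $\z$-energy scalar product agrees with the bilinear form of \eqref{eqimeximplzlocfemp}. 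Then decompose $E_{n+1} = (u_{n+1}^{\app} - \tilde u_{n+1}) + (\tilde u_{n+1} - u_{n+1})$.

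For the first piece, Line \ref{linfempargx} of Algorithm \ref{alg:implicitEulerpara} is exactly the localized gamblet solve of Algorithm \ref{gambletsolveexlinsysloc} applied to right-hand side $h_{n+1} := \tfrac{1}{\dt}\mu u_n^{\app} + g_{n+1}$, so Theorem \ref{tmdiscrete} gives
\begin{equation*}
\|u_{n+1}^{\app} - \tilde u_{n+1}\|_{\z}\ \leq\ \epsilon\,\|h_{n+1}\|_{H^{-1}(\Omega)}\ \leq\ C\epsilon\Bigl(\tfrac{1}{\dt}\|u_n^{\app}\|_{L^2(\Omega)} + \|g_{n+1}\|_{H^{-1}(\Omega)}\Bigr).
\end{equation*}
For the second piece, the difference $\delta_{n+1} := \tilde u_{n+1} - u_{n+1}$ satisfies $\tfrac{1}{\dt}\mu\delta_{n+1} - \diiv(a\nabla\delta_{n+1}) = \tfrac{1}{\dt}\mu E_n$. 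Testing against $\delta_{n+1}$ and using the Cauchy–Schwarz inequality together with the elementary bound $\tfrac{1}{\dt}\int\mu w^2 \leq \|w\|_{\z}^2$, I obtain the contraction $\|\delta_{n+1}\|_{\z} \leq \|E_n\|_{\z}$, reflecting the L-stability of the implicit Euler scheme in the $\z$-energy norm.

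Combining the two bounds yields the recursion $\|E_{n+1}\|_{\z} \leq \|E_n\|_{\z} + C\epsilon\bigl(\tfrac{1}{\dt}\|u_n^{\app}\|_{L^2} + \|g\|_{L^\infty(0,T,H^{-1})}\bigr)$. To close the estimate we need a uniform-in-$n$ bound of the form $\|u_n^{\app}\|_{L^2(\Omega)} \leq C T\|g\|_{L^\infty(0,T,H^{-1})}$. This follows from the standard energy estimate for the implicit Euler discretization of \eqref{eqn:parabolicFE} (testing the weak form against $u_{n+1}$, using Young's inequality and summing in $n$) applied to $u_n$, plus an induction hypothesis that $\|E_n\|_{\z}$ stays smaller than the a priori bound, which is legitimate under the assumed smallness of $\epsilon$. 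Starting from $E_0 = 0$ and iterating $n \leq T/\dt$ times then gives
\begin{equation*}
\|E_n\|_{\z}\ \leq\ n\cdot C\epsilon\,\tfrac{T}{\dt}\,\|g\|_{L^\infty(0,T,H^{-1}(\Omega))}\ \leq\ C\Bigl(\tfrac{T}{\dt}\Bigr)^{\!2}\epsilon\,\|g\|_{L^\infty(0,T,H^{-1}(\Omega))},
\end{equation*}
and the claimed $H^1_0$-estimate follows from $\|\cdot\|_{H^1_0(\Omega)} \leq C\|\cdot\|_{\z}$, which uses only uniform ellipticity of $a$.

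The main obstacle is obtaining the a priori $L^2$-bound on $u_n^{\app}$ (rather than on $u_n$), since the bound on $E_n$ that one wishes to prove enters the estimate on $\|u_n^{\app}\|_{L^2}$ in a circular fashion; this has to be resolved by an induction closure argument exploiting the smallness of $\epsilon$. A secondary technical point is bookkeeping the appearance of the factor $1/\dt$ in the effective right-hand side $h_{n+1}$, which is precisely what produces the second power of $T/\dt$ in the final bound and is the reason the error deteriorates as $\dt \to 0$ for fixed localization parameter $\epsilon$.
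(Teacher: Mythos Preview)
Your proposal is correct and follows essentially the same route as the paper's proof. The paper works in coefficient/matrix language rather than PDE language: it writes the localized step as $(\tfrac{M}{\dt}+K)q_{n+1}^{\app}=\tfrac{M}{\dt}q_n^{\app}+f_{n+1}+(\tfrac{M}{\dt}+K)e_n$ with $\|e_n\|_\z\leq C\epsilon\|b_n\|_{K^{-1}}$, proves the same contraction $\|\epsilon_{n+1}-e_n\|_\z\leq\|\epsilon_n\|_\z$ (your $\|\delta_{n+1}\|_\z\leq\|E_n\|_\z$), and establishes an a~priori bound $\|q_n\|_\z\leq\|q_0\|_\z+\sum_k\|f_k\|_{H^{-1}}$ (Lemma~\ref{lem:implicitEuler}) by the same test-against-$q_{n+1}$ argument you describe. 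The circular dependence you flag is handled in the paper not by a separate induction closure but by writing $\|q_n^{\app}\|_\z\leq\|q_n\|_\z+\|\epsilon_n\|_\z$, which yields the recursion $\|\epsilon_{n+1}\|_\z\leq(1+C\epsilon)\|\epsilon_n\|_\z+C\epsilon\tfrac{T}{\dt}\|g\|_{L^\infty(H^{-1})}$ and then uses $(1+C\epsilon)^{T/\dt}\leq C$ for $\epsilon$ small; this is exactly your induction closure written as a Gr\"onwall-type iteration.
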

We refer to Subsection \ref{subsecproofthmimplicitEuler} for the proof of Theorem \ref{subsecproofthmimplicitEuler}.

\begin{Remark}
The proof of Theorem \ref{thm:implicitEuler} is based on Inequality \eqref{eqn:veineq} which is  the B-stability condition of Definition \ref{def:b-stability}. It is easy to show that this inequality is sufficient for validity of Theorem \ref{thm:implicitEuler}. Similar results to Theorem \ref{thm:implicitEuler} holds true for B-stable methods and they can also be accelerated  by the localized gamblets (e.g.,  the DIRK methods and SDIRK methods presented in Subsection \ref{subsecdirk3}).
\end{Remark}

\begin{Remark}\label{rmksublinear}
By truncating the propagation of the solution at higher frequencies (large $k$) in the generalized gamblet decomposition one obtains a numerical homogenization of the wave or parabolic equations (as in \cite{OwZh06c, OwZh:2007b, OwZh:2011}) with sub-linear complexity under sufficient regularity of initial conditions and source terms.
\end{Remark}

\subsection{TR-BDF2 time discretization}
The TR-BDF2  \cite{Bank:1985} time-discretization of \eqref{eqn:parabolicFE} is
\begin{equation}\label{eqnparatrbdf2}
\begin{cases}
    (M+\frac{\gamma\dt}{2} K) q_{n+\gamma} =(M-\frac{\gamma\dt}{2} K) q_{n}+\dt \frac{f_{n}+f_{n+\gamma}}{2}\\
		(M+\frac{1-\gamma}{2-\gamma}\dt K) q_{n+1} =\frac{M q_{n+\gamma}}{\gamma(2-\gamma)}-\frac{(1-\gamma)^2}{\gamma(2-\gamma)}M q_{n}+\frac{1-\gamma}{2-\gamma}\dt f_{n+1}
\end{cases}
\end{equation}
Recall that TR-BDF2 is  A-stable, L-stable   but  neither algebraically stable nor B-stable \cite{Hosea1996}.  It is  second order accurate and belongs to the category of diagonally implicit Runge-Kutta (DIRK) methods.
We  select $\gamma=2-\sqrt{2}$ to  minimize the local error \cite{Bank:1985} and ensure $\displaystyle\frac{\gamma}{2}=\frac{1-\gamma}{2-\gamma}$ (under that choice, \eqref{eqnparatrbdf2}   requires solving two systems of the same form $(M+\frac{\gamma\dt}{2} K)Q=b$ at each time step).
Let $u_n(x):=\sum_{i\in \N} q_{n,i} \varphi_i(x)$ be the corresponding approximation of $u(x,n \Delta t)$.
Observe that
 solving \eqref{eqnparatrbdf2}  is equivalent to obtaining the finite element solution (in $\V$) of
\begin{equation}\label{eqnparatrbdf2pde}
\begin{cases}
\frac{2}{\gamma\dt}\mu u_{n+\gamma} -\diiv \big(a  \nabla u_{n+\gamma}\big) =\frac{2 \mu u_{n}}{\gamma\dt} + \diiv \big(a  \nabla u_{n}\big)+ \frac{g_{n}+g_{n+\gamma}}{\gamma} \\
\frac{2}{\gamma\dt}\mu u_{n+1} -\diiv \big(a  \nabla u_{n+1}\big) =\frac{1}{\gamma(1-\gamma)\dt} \mu u_{n+\gamma}-  \frac{1-\gamma}{\gamma \dt }\mu u_{n}  + g_{n+1}
\end{cases}
\end{equation}
As in Subsection \ref{subsecaccmidp}, to  achieve near linear complexity  we will perform the inversion of implicit systems in \eqref{eqnparatrbdf2pde}   in a localized $\z$-gamblet basis with $\zeta= \sqrt{2\gamma \dt}$.

\subsection{DIRK3 and SDIRK3}\label{subsecdirk3}
Other popular time-discretion methods for \eqref{eqn:parabolicFE} are DIRK3  \cite{BoomZingg:2015} and  SDIRK3  \cite[p262]{Butcher:2008}. DIRK3 (3-stages Diagonally Implicit Runge-Kutta)  is L-stable and B-stable \cite{BoomZingg:2015}, and its  Butcher tableau is given in Table \ref{tabDIRK3}.
\begin{table}[h]
  \centering
  \scriptsize
  \begin{tabular}{l|lllll}
    0.0585104413419415   & 0.0585104413426586 & 0.0                &   0.0 \\
    0.8064574322792799   & 0.0389225469556698 & 0.7675348853239251 &   0.0 \\
    0.2834542075672883   & 0.1613387070350185 & -0.5944302919004032  & 0.7165457925008468 \\\hline
					 & 0.1008717264855379 & 0.4574278841698629 & 0.4417003893445992
  \end{tabular}
  \caption{Butcher tableau for DIRK3}
  \label{tabDIRK3}
\end{table}
The implementation of DIRK3 requires solving 3 equations
$\frac{1}{\dt A_{i,i}}\mu w_i -\diiv \big(a  \nabla w_i\big) = b_i$ using finite-elements in $\V$, where $A_{1,1},\ldots,A_{3,3}$ are the diagonal entries of
 the Runge-Kutta matrix $A$ of DIRK3.

SDIRK3 (3-stage Singly Diagonally Implicit Runge-Kutta) is L-stable \cite[P 262]{Butcher:2008} and its Butcher tableau is given in
Table \ref{tabSDIRK3} which has identical diagonal entries. The implementation of SDIRK3 requires solving 3 equations
$\frac{1}{\dt \lambda}\mu w_i -\diiv \big(a  \nabla w_i\big) = b_i$ using finite-elements in $\V$, where $\lambda$ is defined in Table \ref{tabSDIRK3}.
\begin{table}[h]
  \centering
  \begin{tabular}{l|lllll}
    $\lambda$     & $\lambda$    & 0   & 0         \\
    $\frac12(1+\lambda)$    & $\frac12(1-\lambda)$  & $\lambda$        & 0   \\
    1      & $\frac{1}{4}(-6\lambda^2+16\lambda-1)$ & $\frac{1}{4}(6\lambda^2-20\lambda+5)$	& $\lambda$  \\\hline
					 & $\frac{1}{4}(-6\lambda^2+16\lambda-1)$ & $\frac{1}{4}(6\lambda^2-20\lambda+5)$	& $\lambda$
  \end{tabular}
  \caption{Butcher tableau for SDIRK3 where  $\lambda\simeq 0.4358665215$  (identified as a root of $\frac{1}{6}-\frac{3}{2}\lambda + 3\lambda^2-\lambda^3=0$) ensures  L-stability.}
  \label{tabSDIRK3}
\end{table}
As in Subsection \ref{subsecaccmidp}, to  achieve near linear complexity  we will perform the inversion of linear systems of DIRK3 using 3 localized $\z$-gamblets with $\zeta= \sqrt{ A_{i,i} \dt/2}$, and the inversion of linear systems of SDIRK3 using only 1 localized $\z$-gamblets with $\zeta= \sqrt{ \lambda \dt/2}$.

\subsection{Numerical experiments}
Let $a(x)$ be defined as in Example \ref{egadef} and $\q = 6$, $g(x,t) = \sin(2\pi(t+x_1))\cos(2\pi(t+x_2))$, $u(x,0) = \sin(2\pi x_1)\cos(2\pi x_2)$.
The reference solution is the finite element solution with piecewise bilinear elements $\{\varphi_i\mid i\in \N\}$, and
\textit{Matlab} built-in integrator \textit{ode15s} with time step $dt = 1/1280$. We test the performance of exact and localized
gamblets  adapted to implicit Euler, TR-BDF2, DIRK3, SDIRK3, and fully implicit Runge-Kutta methods Radau IIA and Lobatto IIIC (which will be
introduced in \S~\ref{sec:irk}). We compute  numerical solutions up to time $T = 1$.
Figure \ref{fig:sol:para} shows  the reference solution, the DIRK3 solution with localized gamblet ($nl=3$) and its numerical error with respect to the reference solution.
\begin{figure}[H]
\includegraphics [width=13cm, height=4cm]{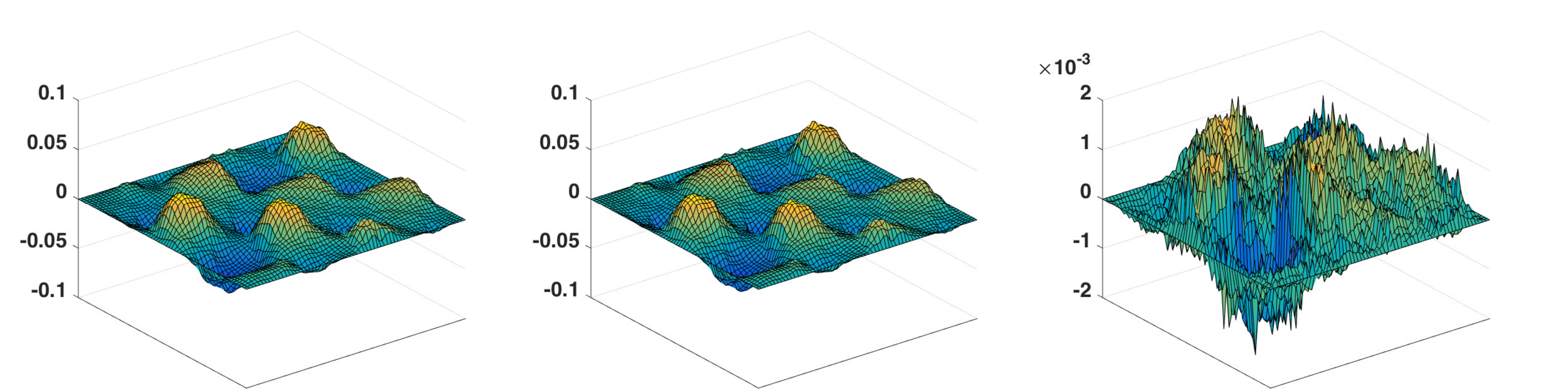}
\caption{Solutions at $T=1$. Left: Reference solution; Middle: numerical solution with localized gamblet, DIRK3, $nl=3$ and $\dt = 0.05$; Right: error of the localized solution.}
\label{fig:sol:para}
\end{figure}

\begin{figure}[H]
\centering
\includegraphics [width=6cm]{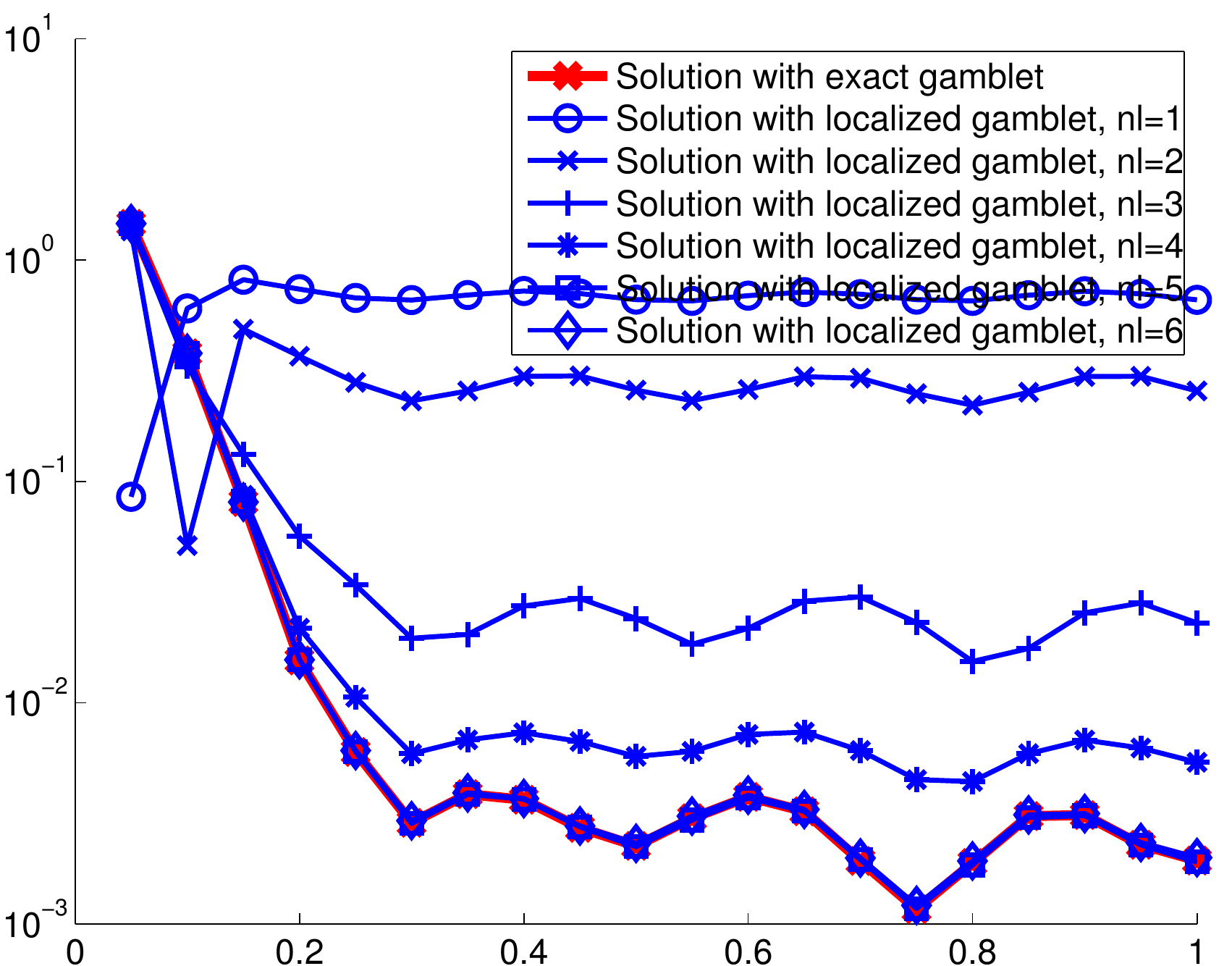}\quad
\includegraphics [width=6cm]{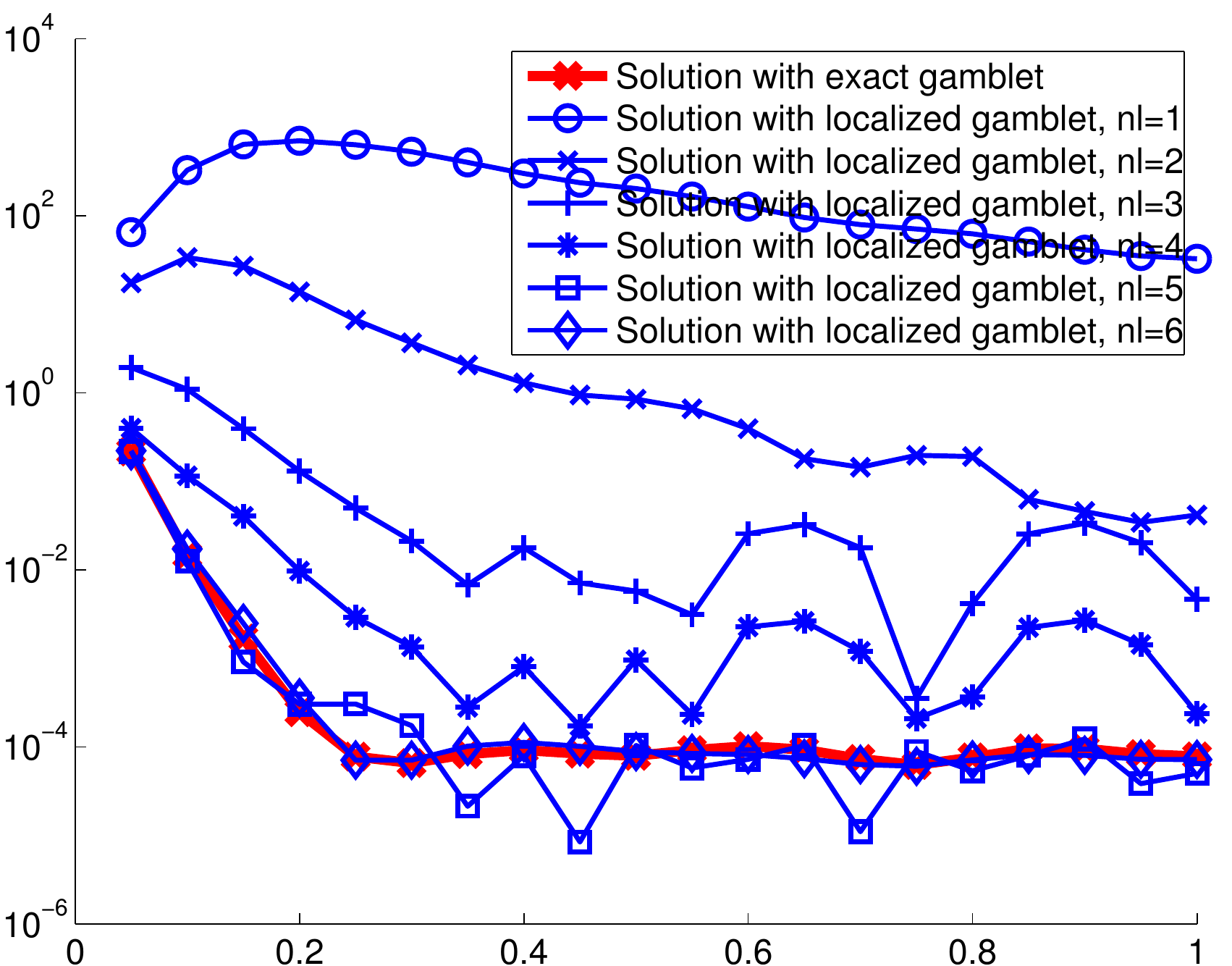}
\caption{Evolution of the relative error of energy w.r.t localization ($\dt=0.05$): Left, TR-BDF2; Right, Radau IIA scheme.}
\label{fig:energy:para}
\end{figure}

Figure \ref{fig:energy:para} shows the relative error of the energy of gamblet solutions with respect to time and localization. The errors decays when the number of layers increases, and Radau IIA scheme has better accuracy compared to TR-BDF2 method, note that Radau IIA is a 5th order method and TR-BDF2 is a 2nd order method.

 \begin{figure}[H]
    \begin{subfigure}[b]{0.5\textwidth}
        \includegraphics[width=\textwidth]{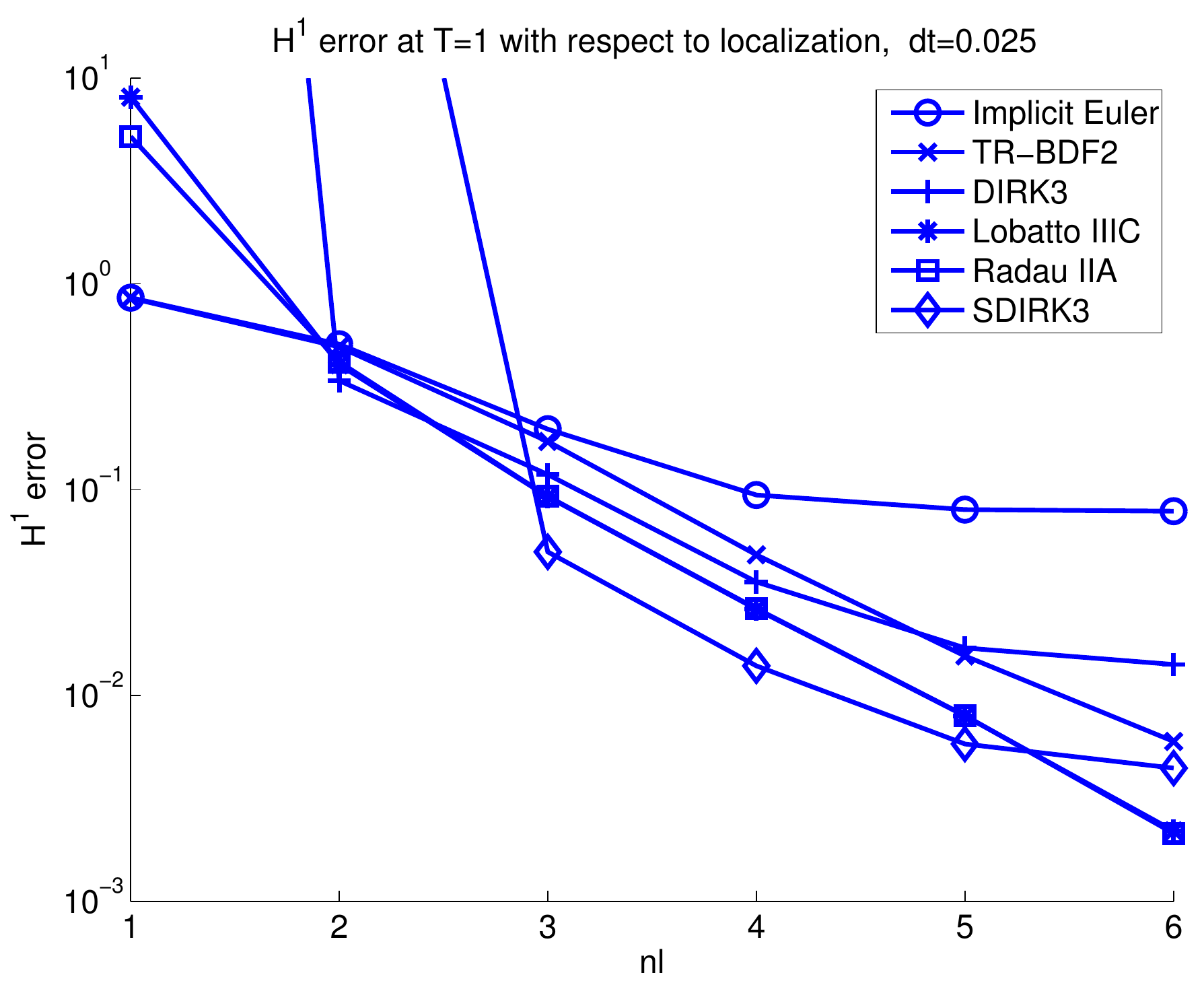}
    \end{subfigure}
        \begin{subfigure}[b]{0.5\textwidth}
        \includegraphics[width=\textwidth]{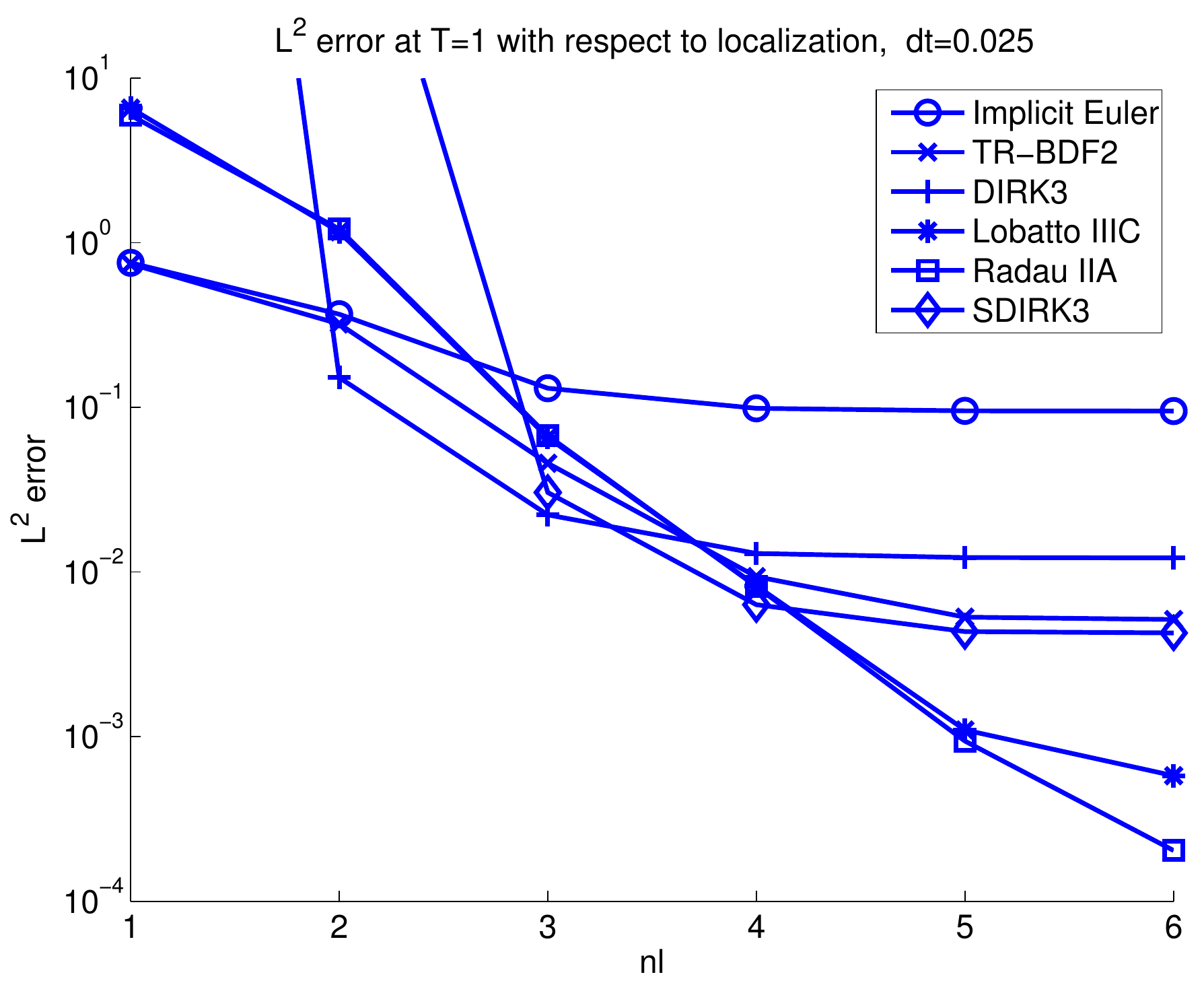}
    \end{subfigure}
    \caption{Left: $H^1$ error w.r.t localization parameter $nl$; Right: $L^2$ error w.r.t localization parameter $nl$. $\dt = 0.025$}
    \label{fig:errorat1wrtnl:para}
\end{figure}

Figure \ref{fig:errorat1wrtnl:para} shows the $H^1$ and $L^2$ errors at $T=1$ for all the 6 numerical schemes with respect to different localization parameters $nl=1,\cdots, 6$ for exact and localized gamblet solutions. High order methods such as Radau IIA and Lobatto IIIC have best accuracy if more localizaton layers are used. When $nl=2$ or $3$, it appears that simpler methods such as TR-BDF2, DIRK3 or SDIRK3 achieve a better balance between accuracy and computational cost.

Figure  \ref{figure:band_i4para_nl3} compares the components of the reference solution and localized gamblet solutions in each subband $\W^{(k),\z}$, computed with the DIRK3 scheme (we observe similar results for other schemes). Most of the error occurs in the first subband and at early time, and  gets damped quickly.


\begin{figure}[H]
\centering
\includegraphics [width=10cm, height=6cm]{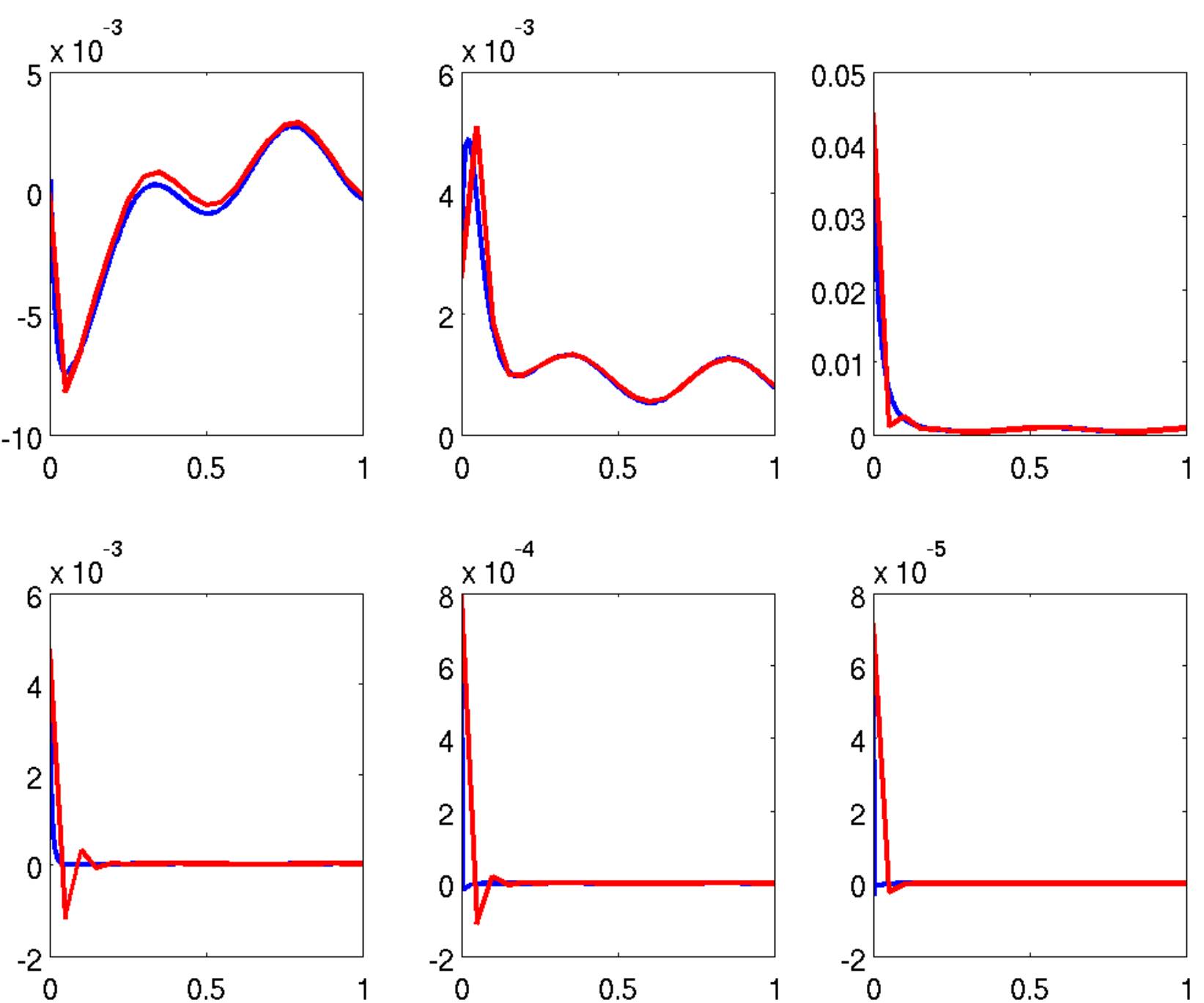}
\caption{Evolution of $\chi_1^{(k)}$ component in subband $\W^{(k), \z}$, $k = 1, \cdots, 6$, with localized gamblet ($nl=3$) for DIRK3 scheme. The blue curve is obtained from the reference solution, and the red curve is obtained from the localized gamblet solution.
}
\label{figure:band_i4para_nl3}
\end{figure}

Figure \ref{fig:errorat1:para} shows $H^1$ and $L^2$ errors with exact gamblets and localized gamblets ($nl=3$) at time $T=1$ with time steps  $\dt = 1/10, 1/20, 1/40, 1/80, 1/160, 1/320$. All 6 methods are tested: implicit Euler, TR-BDF2, DIRK3, Lobatto IIIC, Radau IIA and SDIRK3. In general, the higher order methods such as Radau IIA and Lobatto IIIC are more accurate for coarser time steps, refinement of time steps does not reduce the error further due to the fixed spatial resolution. Localized gamblet solutions converge as time steps decrease. Localized TR-BDF2 and Implicit Euler have a slower convergence rate compared to higher order methods.

 \begin{figure}[H]
    \begin{subfigure}[b]{0.5\textwidth}
        \includegraphics[width=\textwidth]{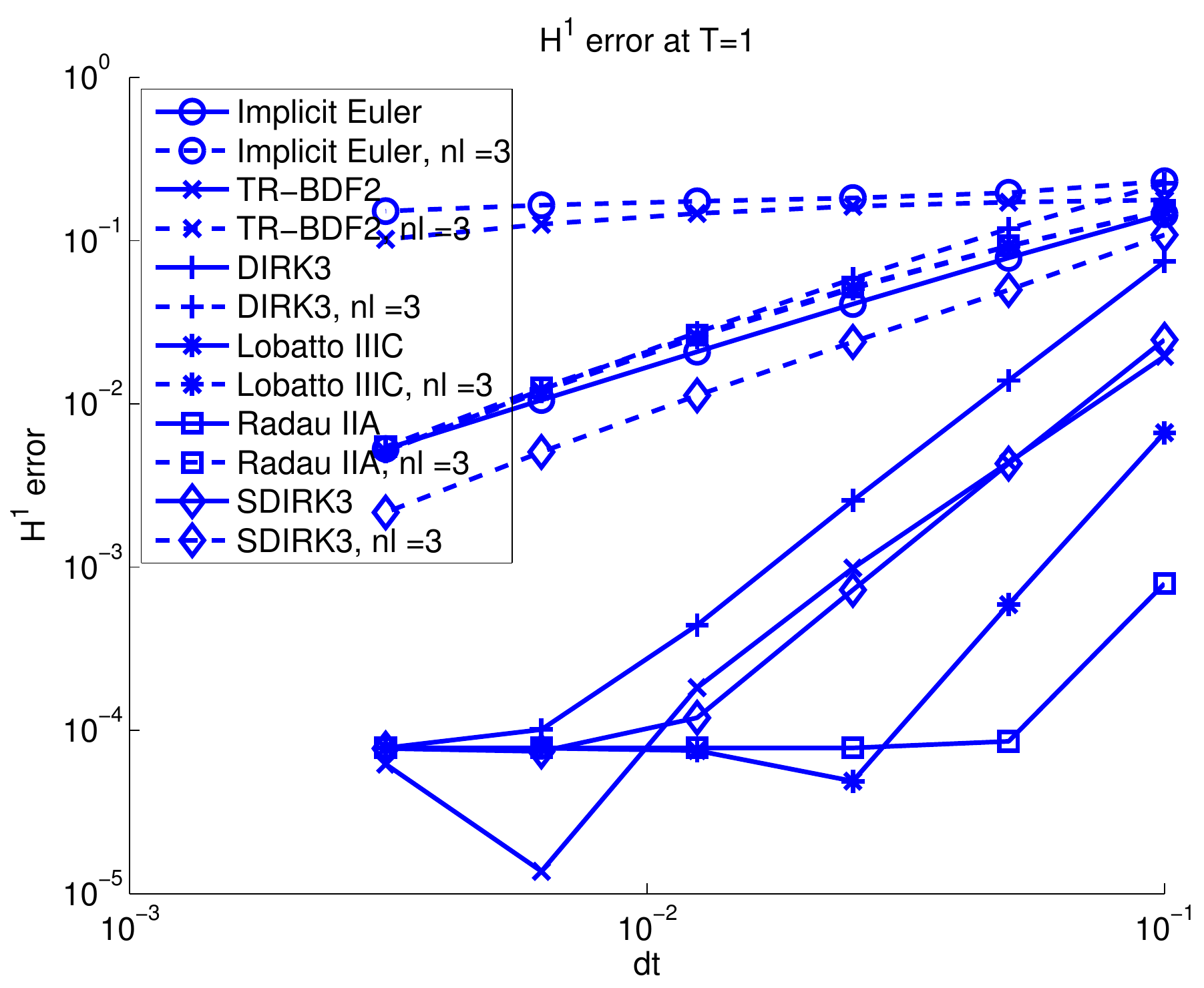}
    \end{subfigure}
    \begin{subfigure}[b]{0.5\textwidth}
        \includegraphics[width=\textwidth]{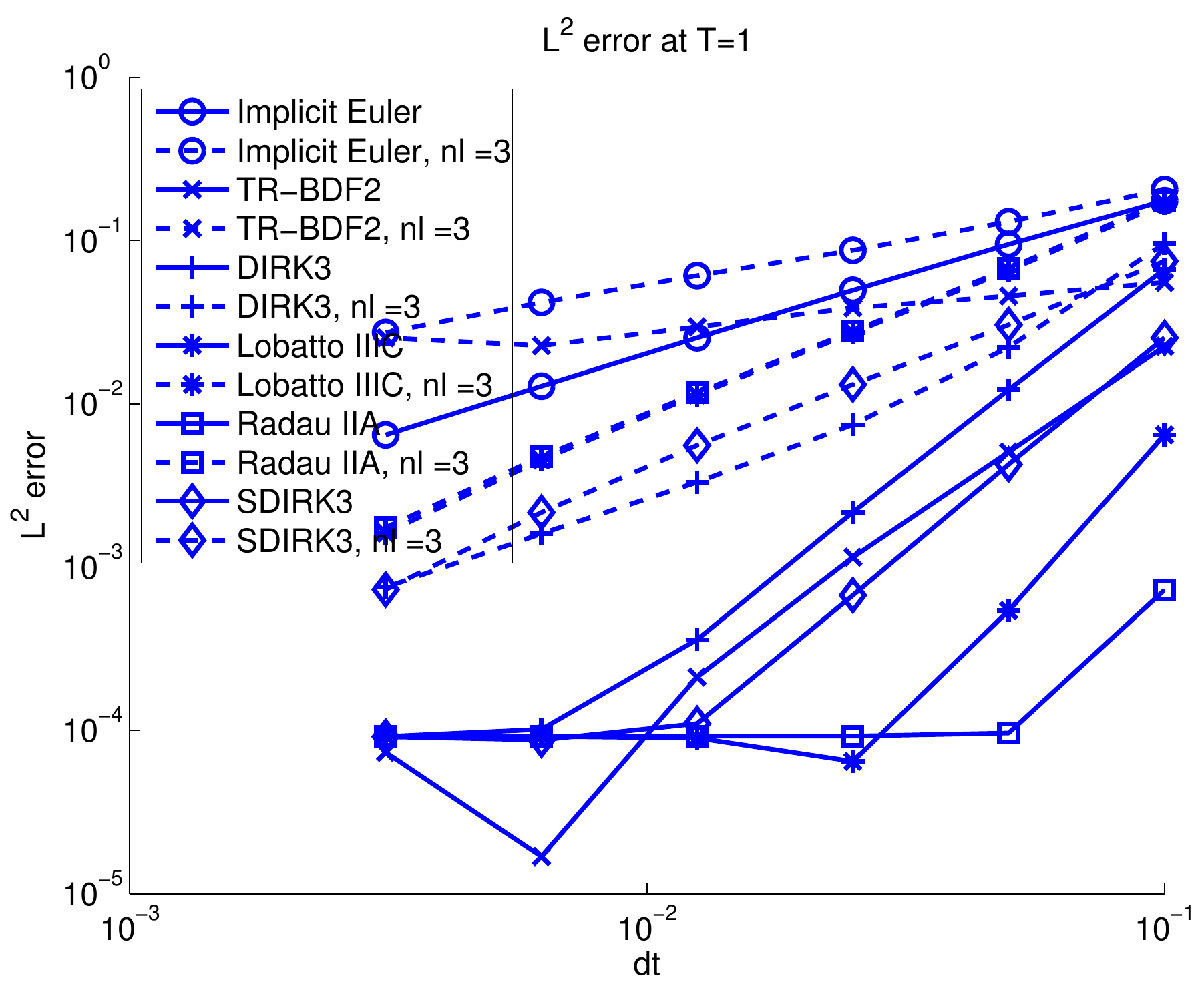}
    \end{subfigure}
    \caption{Left: $H^1$ error at time $T=1$; Right: $L^2$ error at time $T=1$.}
    \label{fig:errorat1:para}
\end{figure}

\section{Complex gamblets for  higher order implicit schemes}\label{seccomplexgamblets}

\subsection{Solving wave equation with 2 stages Gauss-Legendre scheme}
\label{sec:gl2}
The implicit midpoint scheme introduced in section \S~\ref{sec:implicitmidpoint} is a 1 stage Gauss-Legendre method. To obtain higher order method for the wave equation, we can use higher order Gauss-Legendre methods . Here we describe the
implementation of 2 stages Gauss-Legendre scheme, which is 4th order accurate, unconditionally stable, symplectic, symmetric (time-reversible) and preserves quadratic invariants exactly \cite{HairerLubichWanner06}. In particular, we will show how to use gamblets to achieve near linear complexity.

The Butcher tableau for the 2 stages Gauss-Legendre scheme is as follows.
\begin{table}[h]
  \centering
  \begin{tabular}{l|llll}
    $\frac12-\frac{\sqrt{3}}{6}$   & $\frac{1}{4}$                &  $ \frac{1}{4}-\frac{\sqrt{3}}{6} $\\
    $\frac12+\frac{\sqrt{3}}{6} $  & $\frac{1}{4}+\frac{\sqrt{3}}{6} $&   $\frac{1}{4} $\\ \hline
					 &  $\frac12$ & $\frac12$
  \end{tabular}
  \caption{Butcher tableau for GL2}
  \label{tabGL2}
\end{table}
Namely, the Runge-Kutta matrix, weights and nodes are $A = \begin{pmatrix} \frac{1}{4}  &   \frac{1}{4}-\frac{\sqrt{3}}{6} \\ \frac{1}{4}+\frac{\sqrt{3}}{6} &  \frac{1}{4} \end{pmatrix}$, $\displaystyle b = \big(\frac12, \frac12\big)$, and $\displaystyle c=\big(\frac12-\frac{\sqrt{3}}{6}, \frac12+\frac{\sqrt{3}}{6}\big)^T$.

\def\Z{\mathbf 0}
\def\I{\mathbf I}
Using notations in \S~\ref{secwavepde}, let $y_n = (q_n; p_n)$ be a column vector of length $2N$, $f_n^1 = (\Z; f(t_n+c_1 h))$, and $f_n^2 = (\Z; f(t_n+c_2 h))$, where $\Z$ is the zero valued column vector of length $N$.  Apply the 2 stages Gauss-Legendre scheme to equation \eqref{eqhdsgjhdgdfi}, we have
\begin{align*}
	k_n^1 & = f_n^1 + \begin{pmatrix}\Z & M^{-1}\\ -K &\Z\end{pmatrix} (y_n+ \dt A_{11} k_n^1 + \dt A_{12}k_n^2) \\
	k_n^2 & = f_n^2 + \begin{pmatrix}\Z & M^{-1}\\ -K &\Z\end{pmatrix} (y_n+ \dt A_{21} k_n^1 + \dt A_{22}k_n^2) \\
	y_{n+1} & = y_{n} + \dt(b_1k_n^1 + b_2k_n^2)
\end{align*}
Define $H$ in the following tensor product format,
\begin{equation}
	H := \begin{pmatrix}1  & 0 \\ 0 & 1 \end{pmatrix}\otimes \begin{pmatrix}M  & 0 \\ 0 & \I \end{pmatrix} +
	        a\otimes \begin{pmatrix} 0 & -h \I \\ hK & 0 \end{pmatrix}
\end{equation}
where $\I$ is the identity matrix of size $N$.

Write $F_n := (p_n; -K q_n + f(t_n+c_1 \dt); p_n; -Kq_n+f(t_n+c_1 \dt))^T$, we need to solve the coupled linear system $H k_n = F_n$ for $k_n = (k_n^1; k_n^2)$, and then $y_{n+1}$.

Since  $A$ is diagonalizable, we can write $A = S\Lambda S^{-1}$ with $\Lambda = \diag(\lambda_1, \lambda_2)$. Use $T := S\otimes \begin{pmatrix}\I & \Z \\ \Z & \I \end{pmatrix}$  to block diagonalize $H$. That is,
\begin{equation}
	\tilde{H} := T H T^{-1} =
	\begin{pmatrix}
		M & -\dt\lambda_1 \I & \Z & \Z\\
		\dt\lambda_1 K & \I  & \Z & \Z\\
		\Z & \Z & M & - \dt\lambda_2 \I\\
		\Z & \Z & \dt\lambda_2 K & \I
	\end{pmatrix}
\end{equation}
where $T^{-1} = S^{-1}\otimes \begin{pmatrix}\I & \Z \\ \Z & \I \end{pmatrix}$.

Write $\tilde{F}_n := T^{-1} F_n$ and $\tilde{k}_n := T^{-1} k_n$, we have $\tilde{H} \tilde{k}_n = \tilde{F}_n$.   Therefore, instead of solving the coupled linear system with respect to $H$, we solve the decoupled linear systems with respect to $\tilde{H}$.  Let $\bar{H}_1 := \begin{pmatrix} M & -\dt\lambda_1 \I\\ \dt\lambda_1 K & \I \end{pmatrix}$ and $\bar{H}_2: = \begin{pmatrix} M & -\dt\lambda_2 \I\\ \dt\lambda_2 K & \I \end{pmatrix}$. Similar to the gamblet solution for implicit midpoint scheme \eqref{eqimeximpl}, we only need to introduce gamblets associated with the matrices $\dt^2\lambda_i^2K + M$ to solve the linear systems associated with $\bar{H}_i$, $i=1,2$.

\subsection{Solving parabolic equation with fully implicit Runge-Kutta methods}
\label{sec:irk}

Let $q$ be the solution of the semidiscrete ODE system derived from the parabolic PDE \eqref{eqn:parabolic},
\begin{equation}\label{eqn:parabolicODE}
    M \dot{q} + K q =f
\end{equation}
where $f$ is defined as in Section \S~\ref{secwavepde} and $q_0=q(0)$ corresponds to the coefficients of $u_0$ in the $\varphi_i$ basis. Higher order implicit Runge-Kutta methods can be used to solve  \eqref{eqn:parabolicODE} to achieve better stability  and accuracy.
Write $A$, $b$ and $c$  the Runge-Kutta matrix,  weights, and nodes. Let $s$ be the number of stages of the Runge-Kutta method. The Runge-Kutta method can be written as,
\begin{displaymath}
	q_{n+1} = q_n + \dt \sum_{i = 1}^s b_i k_n^i,
\end{displaymath}
with
\begin{displaymath}
	M k_n^i = -K (q_n + \dt\sum_{i=1}^s A_{ij}k_n^j)  + f(t_n+c_i\dt),\quad k=1,\dots, s
\end{displaymath}
Write
\begin{displaymath}
	H: = \I_s\otimes M + \dt A\otimes K\,.
\end{displaymath}
where $\I_s$ is the identity matrix of size $s$.

To obtain $q_{n+1}$, we need to solve the coupled linear system $H k_n = F_n$ for $k_n: = (k_n^1; \cdots; k_n^s)$, 
$F_n : = (-Kq_n + Mf(t_n + c_1 \dt); \dots; -Kq_n + Mf(t_n + c_s \dt))$.

Assume $A$ is diagonalizable, such that $A = S\Lambda S^{-1}$ with $\Lambda = \diag(\lambda_1, \dots, \lambda_s)$. Write $T := S\otimes \I$ (and $T^{-1} = S^{-1}\otimes \I)$, we have,
\begin{displaymath}
	\tilde{H} :=THT^{-1} = \I_s \otimes M + \dt\Lambda \otimes K
\end{displaymath}
Write $\tilde{k}_n := Tk_n$ and $\tilde{F}_n := TF_n$, we can use gamblets associated with the matrices $M+\dt \lambda_i K (i=1,\dots, s)$ to solve the decoupled linear system $\tilde{H} \tilde{k}_n = \tilde{F}_n$ for $\tilde{k}_n$, then obtain $k_n$ and $q_{n+1}$.

In our numerical illustrations in \S~\ref{secparabolic} we have considered the following fully implicit Runge-Kutta methods: Lobatto IIIC and Radau IIA.
Recall that
Lobatto IIIC \cite{HairerWanner:1996, Butcher:2008} is 4th order accurate, L-stable, B-stable,  stiffly accurate, and its Butcher tableau is in Table \ref{tabLobattoIIIC}.
\begin{table}[h]
  \centering
  \begin{tabular}{l|lllll}
    0     & 1/6    	& -1/3   	& 1/6         \\
    0.5  & 1/6    	& 5/12   	& -1/12     \\
    1     & 1/6   	& 2/3		& 1/6	  	\\\hline
	   & 1/6	& 2/3		& 1/6
  \end{tabular}
  \caption{Butcher tableau for Lobatto IIIC scheme.}
  \label{tabLobattoIIIC}
\end{table}

\def\ss{\sqrt{6}}
Recall also that
Radau IIA \cite{HairerWanner:1996, Butcher:2008} is 5th order accurate, A-stable and its Butcher tableau is in Table \ref{tabRadauIIA}.
\begin{table}[h]
  \centering
  \begin{tabular}{l|cllc lc l cl}
    $\frac{2}{5}-\frac{\ss}{10}$ & $\frac{11}{45}-\frac{7\ss}{360}$ & $\frac{37}{225}-\frac{169\ss}{1800}$ & $-\frac{2}{225}+\frac{\ss}{75}$       \\
    $\frac{2}{5}+\frac{\ss}{10}$ & $\frac{37}{225}+\frac{169\ss}{1800}$ & $\frac{11}{45}+\frac{7\ss}{360}$ & $-\frac{2}{225}-\frac{\ss}{75}$       \\
    1     & $\frac{4}{9}-\frac{\ss}{36}$	& $\frac{4}{9}+\frac{\ss}{36}$	& $\frac{1}{9}$  	\\\hline
	   & $\frac{4}{9}-\frac{\ss}{36}$	& $\frac{4}{9}+\frac{\ss}{36}$	& $\frac{1}{9}$
  \end{tabular}
  \caption{Butcher tableau for Radau IIA scheme.}
  \label{tabRadauIIA}
\end{table}

\subsection{Gamblet transformation for complex valued matrix}
As shown in \S~\ref{sec:gl2} and \S~\ref{sec:irk}, we need to  introduce generalized gamblets for matrices of the form $M+\dt^2\lambda^2K$ for hyperbolic equations and $M+\dt\lambda K$ for parabolic equations to open the complexity bottleneck of higher order implicit schemes, where $\lambda$ is the eigenvalue of the corresponding Runge-Kutta matrix. For implicit Runge-Kutta methods, such as Gauss-Legendre type, Lobatto type, and Radau type, $\lambda$ is in general a complex number.
\begin{Definition}
The definition of  complex valued $\zeta$-gamblets is algebraically identical to that of real-value $\zeta$-gamblets. We keep using the scalar product defined in \eqref{eqsp}, and the notion of orthogonality remains the same as in \eqref{eqsp}.   Algorithm \ref{gambletsolve} and \ref{fastgambletsolve} remain unchanged, in particular we do not replace  matrix transpose operations with complex conjugate transpose operations.
\end{Definition}

\begin{Remark}
It is a simple observation that the gamblet transform remains algebraically exact for complex valued matrices (this can be observed rigorously and numerically).
 Although, we loose the positivity of the scalar product \eqref{eqsp} when $\zeta$ is complex (the matrices $M+\dt^2\lambda^2K$ and $M+\dt\lambda K$ remain symmetric when $\lambda$ is a complex complex number but they are neither positive definite nor Hermitian), figures \ref{fig:condnum_para} and \ref{fig:eigrange_para}, show that, for Radau IIA (complex $\zeta$) and  SDIRK3 (real $\zeta$),  condition numbers and ranges of eigenvalues  remain similar. Although this is not proven in this paper, we suspect that the preservation of  uniformly bounded condition numbers and exponential decay with complex valued $\zeta$ is generic.
\end{Remark}

 Figures \ref{fig:realbasis} and \ref{fig:imagbasis}  illustrate the real part and imaginary parts of the gamblet basis associated with the first complex eigenvalue (0.1626+0.1849i) of Radau IIA and $\dt = .1$.

\begin{figure}[H]
        \includegraphics[height=6.5cm,width=\textwidth]{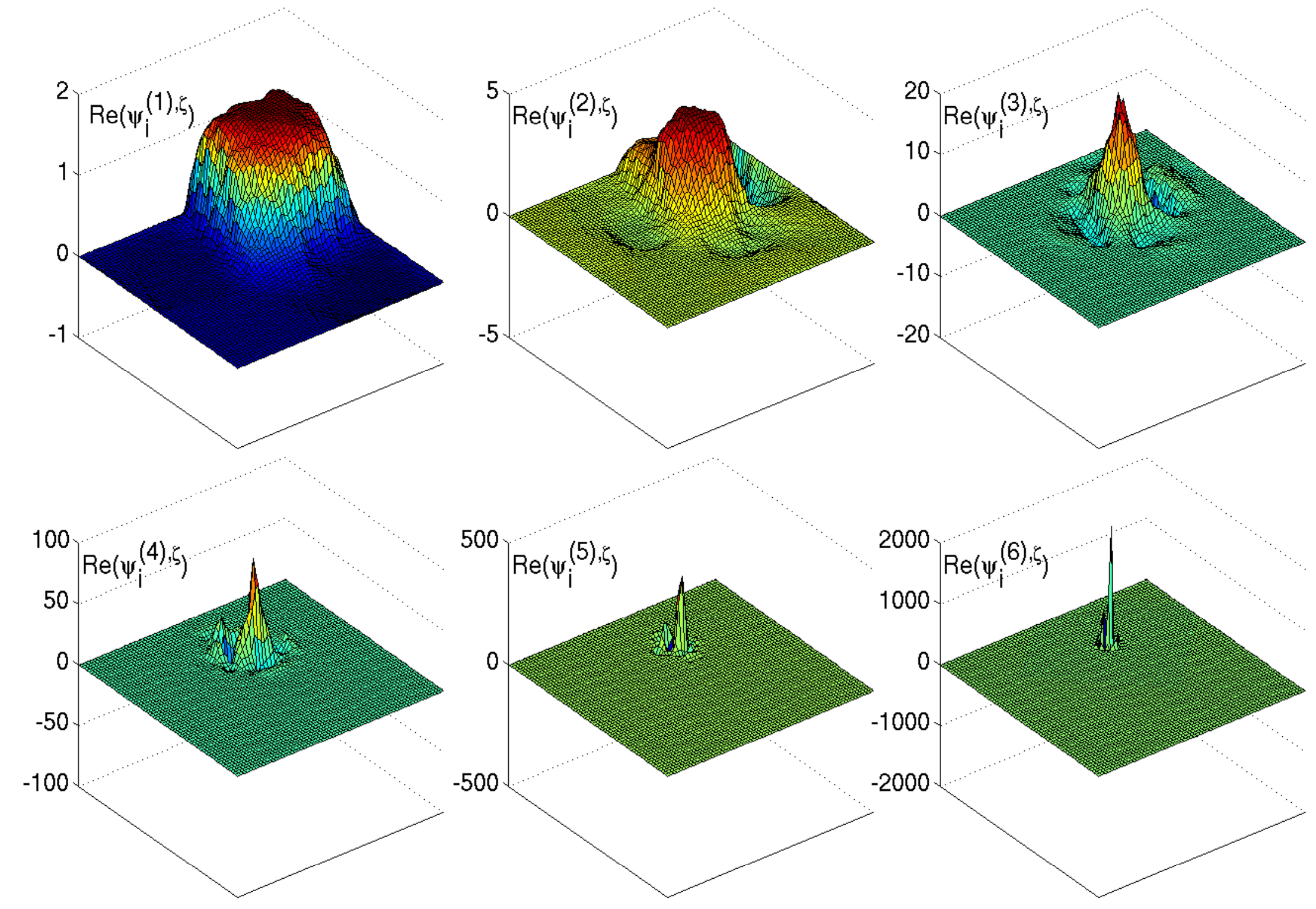}
        \caption{Real part of $\chi^{k}_i$ associated with the first complex eigenvalue of Radau IIA and $\dt=.1$ for parabolic equation.}
        \label{fig:realbasis}
\end{figure}

\begin{figure}[H]
        \includegraphics[height=6.5cm, width=\textwidth]{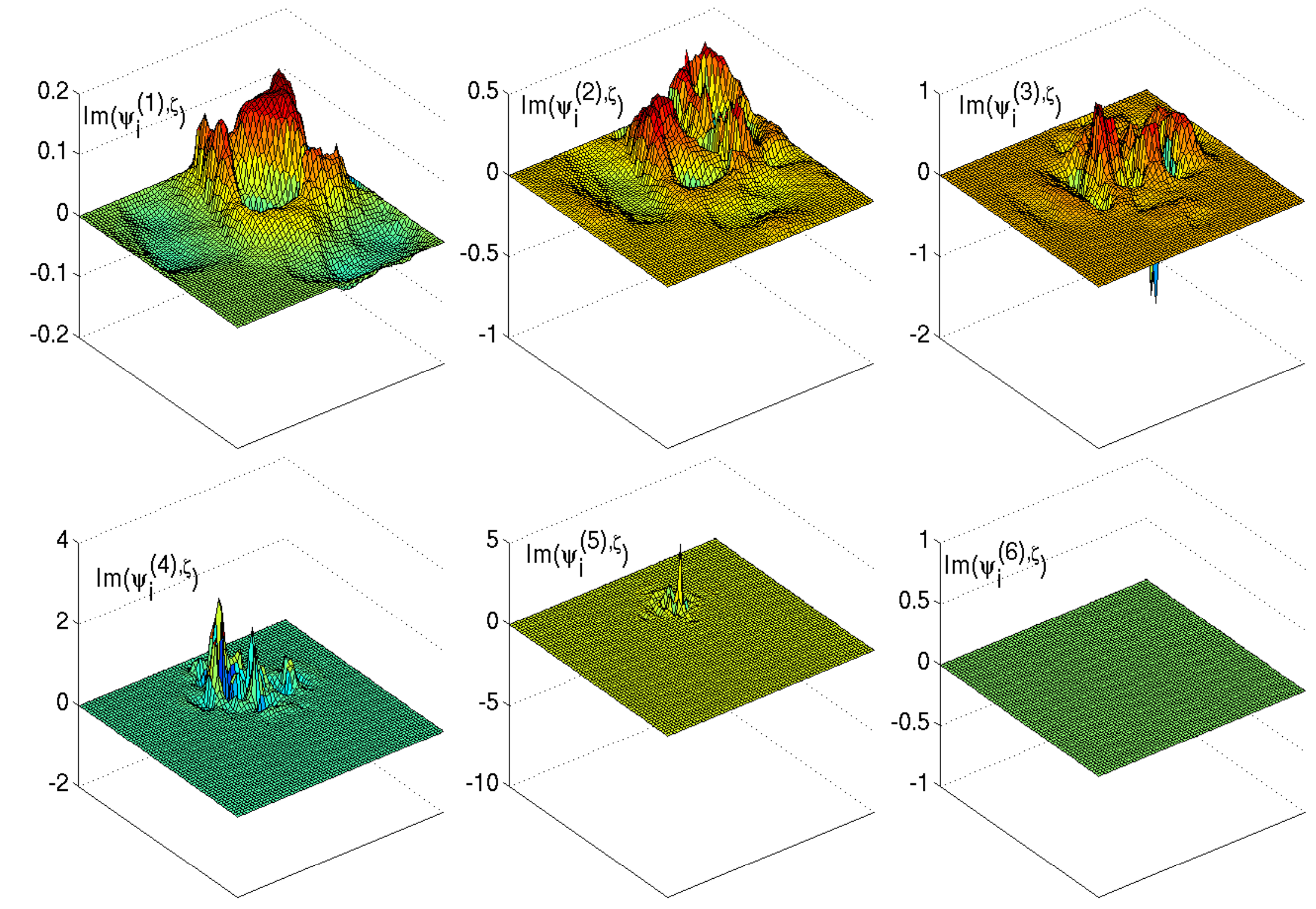}
        \caption{Imaginary part of $\chi^{k}_i$ associated with the first complex eigenvalue of Radau IIA and $\dt=.1$ for parabolic equation.}
        \label{fig:imagbasis}
\end{figure}

Figures \ref{fig:condnum_para} and \ref{fig:eigrange_para} compare the condition numbers and the ranges of eigenvalues in $\W^{(k)}$ of the complex gamblets  associated with the first complex eigenvalue (0.1626+0.1849i) of the RK matrix of Radau IIA and the real gamblets associated with the first eigenvalue (0.4359) of the RK matrix of SDIRK3. Note the similarity between   the condition numbers and ranges of eigenvalues of the  complex gamblets of Radau IIA and the  real gamblets of SDIRK3.
 \begin{figure}[H]
    \begin{subfigure}[b]{0.5\textwidth}
        \includegraphics[width=\textwidth]{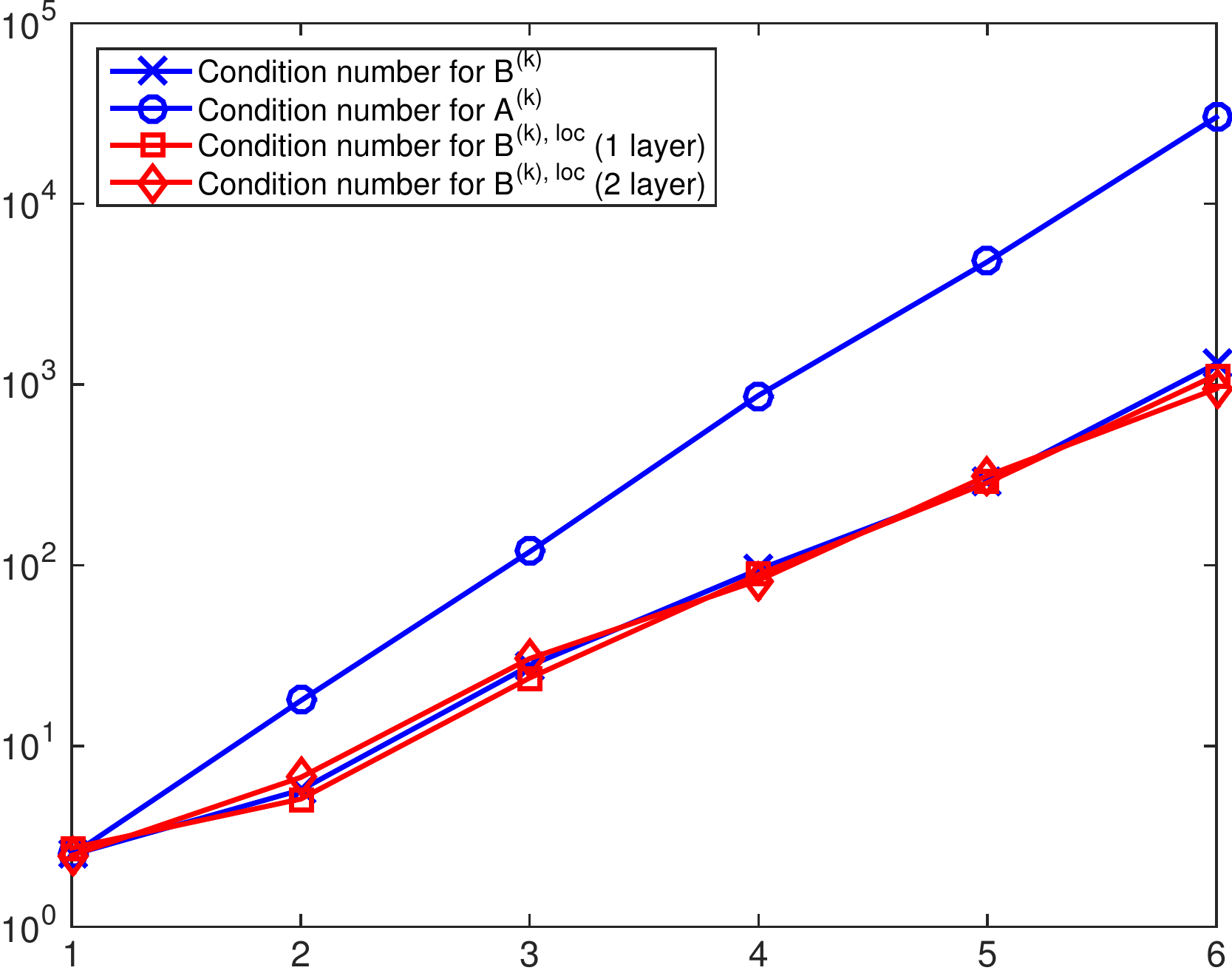}
    \end{subfigure}
    \begin{subfigure}[b]{0.5\textwidth}
        \includegraphics[width=\textwidth]{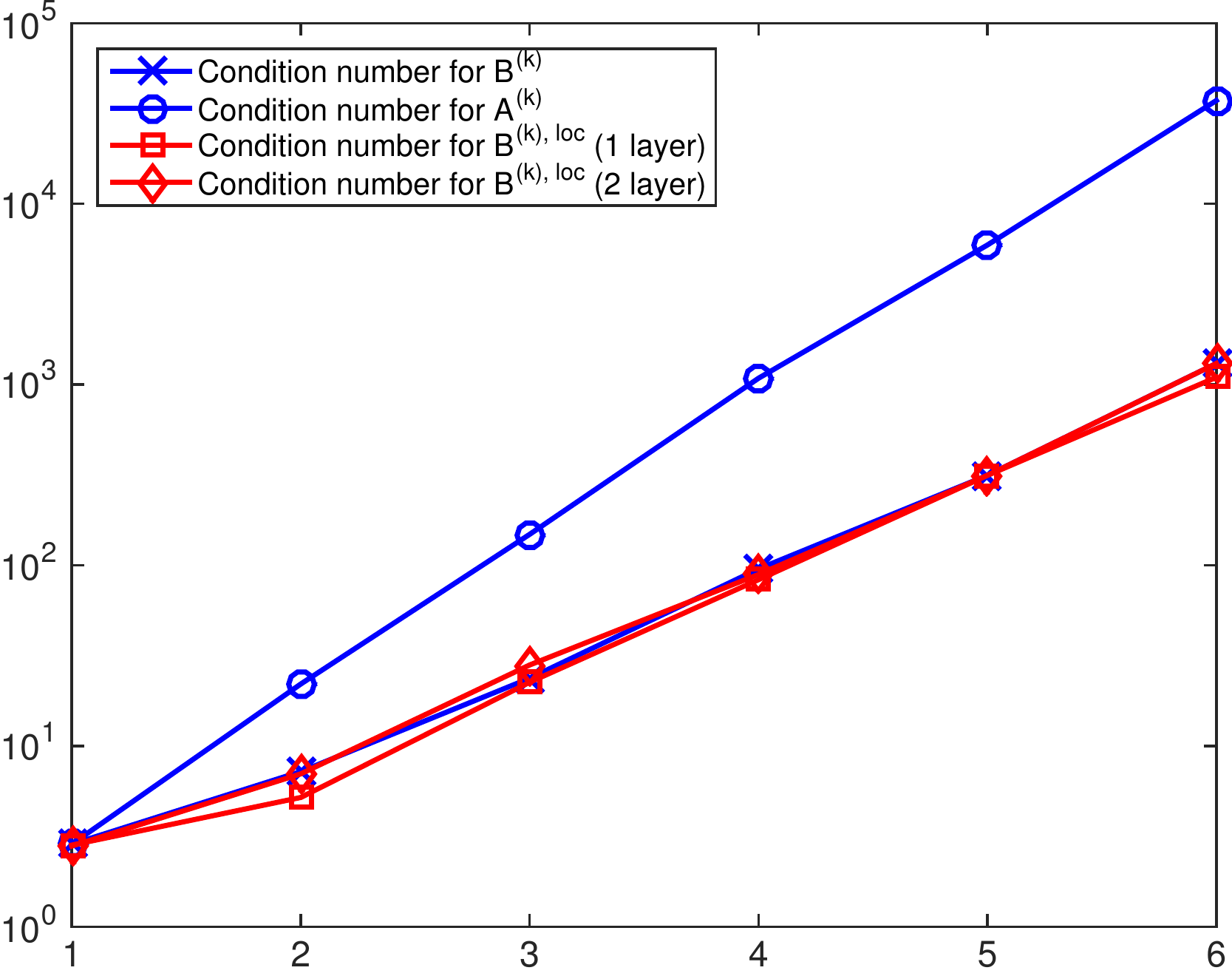}
    \end{subfigure}
    \caption{Left: Range of (the modulus of complex) eigenvalues in $\W^{k}$ associated with the first eigenvalue of Radau IIA and $\dt=1$. Right: Range of (real) eigenvalues in $\W^{k}$ associated with SDIRK3 and $\dt=1$.}
    \label{fig:condnum_para}
\end{figure}

\begin{figure}[H]
    \begin{subfigure}[b]{0.5\textwidth}
        \includegraphics[width=\textwidth]{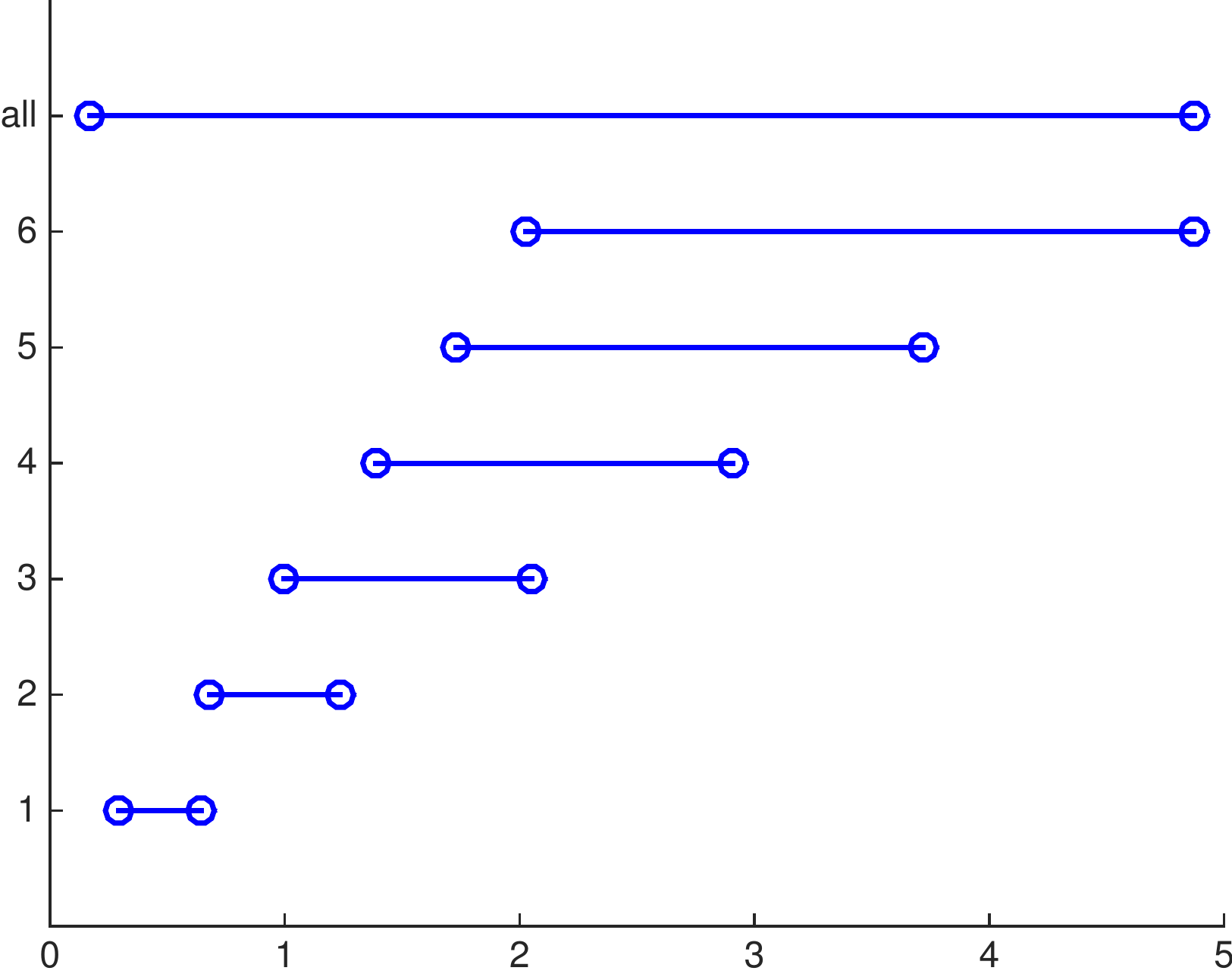}
    \end{subfigure}
    \begin{subfigure}[b]{0.5\textwidth}
        \includegraphics[width=\textwidth]{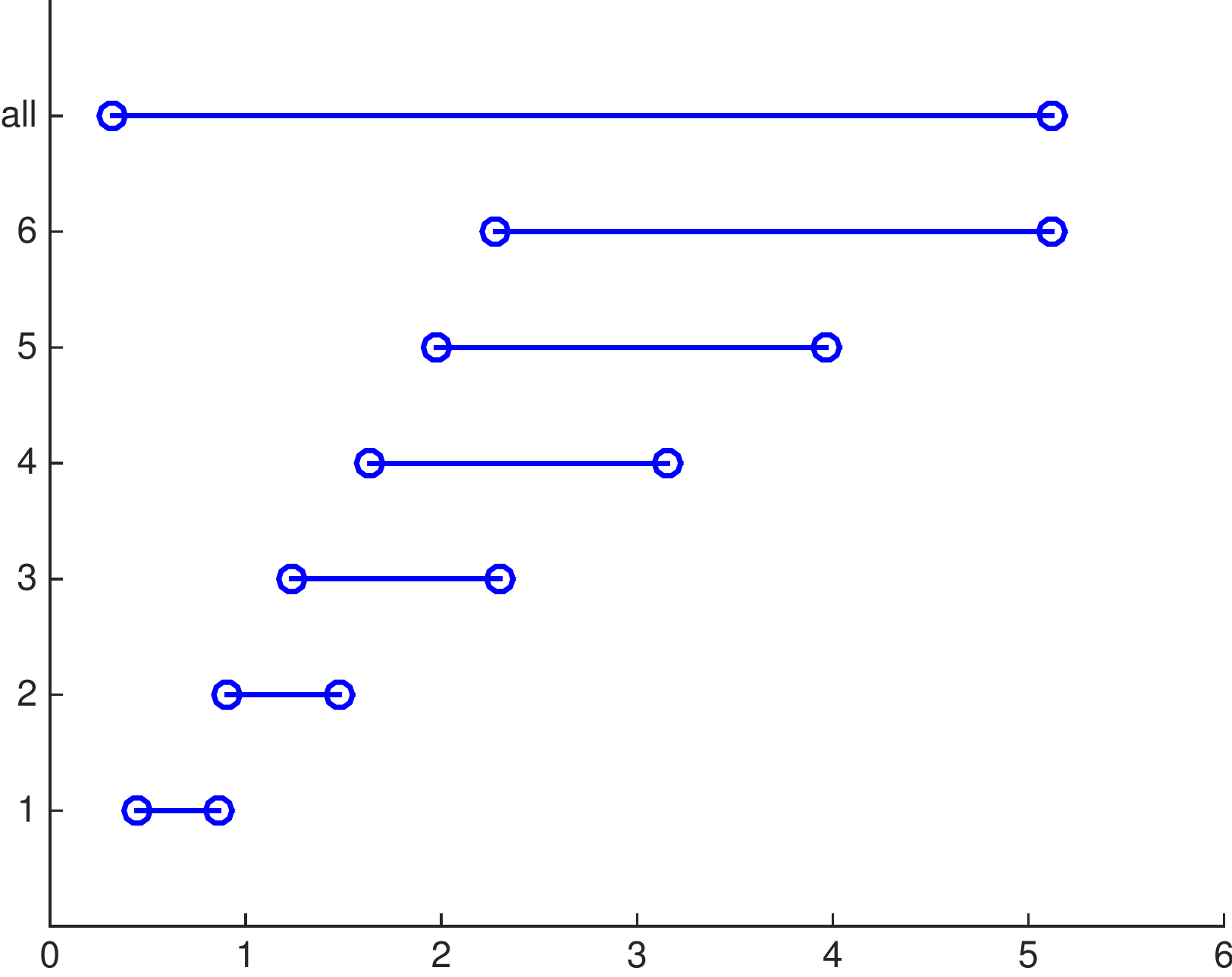}
    \end{subfigure}
    \caption{Left: Range of (the modulus of complex) eigenvalues in $\W^{k}$ associated with the first eigenvalue of Radau IIA and $\dt=1$. Right: Range of (real) eigenvalues in $\W^{k}$ associated with SDIRK3 and $\dt=1$.}
    \label{fig:eigrange_para}
\end{figure}

\section{Solving the parabolic equation in near-linear complexity with multi-time-stepping}\label{secmultitimestep}
As illustrated in Figure \ref{figure:band_i4para_nl3}, errors in finer subbands (corresponding to larger eigenvalues) decay quickly.
Therefore by refining time-steps close to the final stop time it is possible to lower the computational complexity and still preserve the accuracy.
Motivated by this observation we propose the following $\mathcal{O}(N \ln^{3d+1}N)$ complexity multi-time-stepping algorithm for solving
 \eqref{eqn:parabolicODE} (up to grid-size accuracy in energy norm). Let $T = M\dt$, prescribe an error threshold $\varepsilon<\dt$, then there exist
 $s\in \mathbb{N}$, such that $\displaystyle\frac{\dt}{2^{s}} \leq \varepsilon < \frac{\dt}{2^{s-1}}$.
For $n=1, 2, ..., M-1$, we use gamblets associated with time step $\dt$ to obtain $q_n$ up to $T-\dt$. For the last (coarse) time step from $T-\dt$ to $T$,
we subsequently choose time steps $\dt/2$, $\dt/4$, ..., $\dt/2^s$, $\dt/2^s$, and solve the implicit scheme with gamblets associated to those time steps.

 \begin{figure}[H]
    \begin{subfigure}[b]{0.5\textwidth}
        \includegraphics[width=\textwidth]{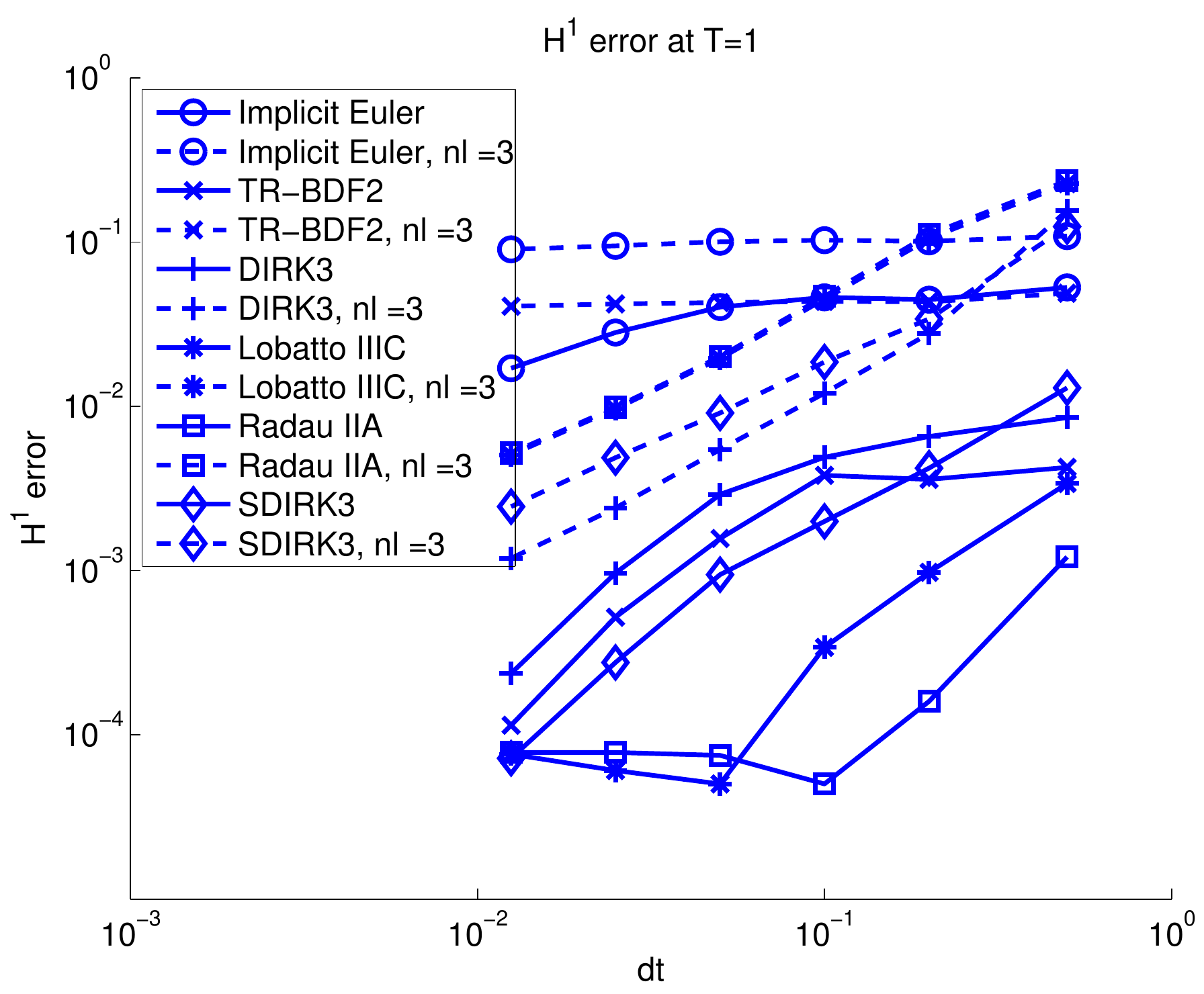}
    \end{subfigure}
    \begin{subfigure}[b]{0.5\textwidth}
        \includegraphics[width=\textwidth]{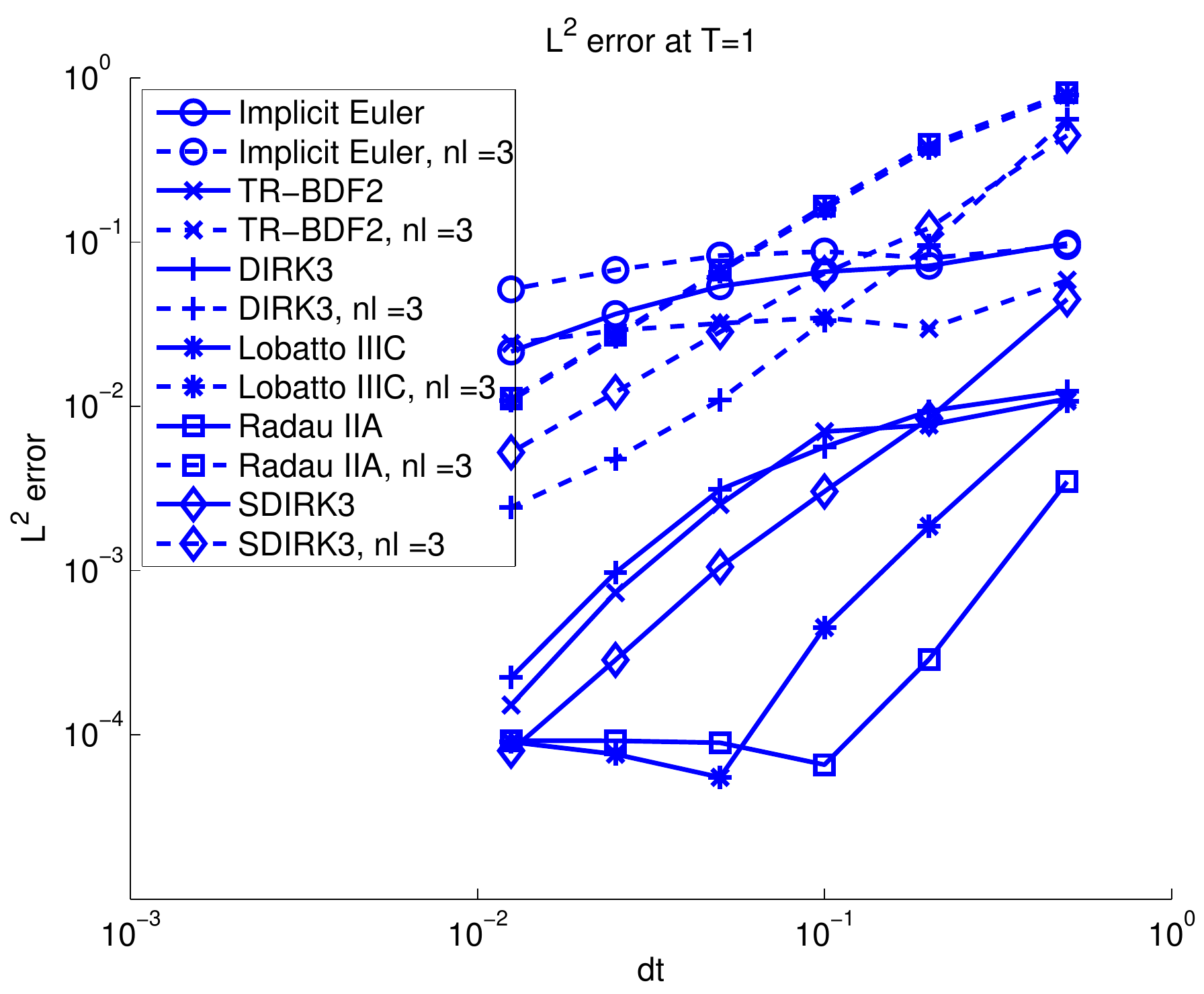}
    \end{subfigure}
    \caption{Solving parabolic equation with multi-time-stepping, $\varepsilon = 1/1280$, Left: $H^1$ error at time $T=1$; Right: $L^2$ error at time $T=1$.}\label{fig:h1errorat1_d1:para}
\end{figure}

Comparing Figures \ref{fig:h1errorat1_d1:para} and  \ref{fig:errorat1:para},   localized gamblets (with 3 layers) achieve $H^1$ error of $10^{-3}$ with multi-time-stepping and $\dt \simeq 0.02$,  we need $\dt \simeq 0.004$ to achieve the same accuracy with uniform time stepping.

\section{Appendix}
\label{sec:appendix}
\subsection{Proof of Theorem \ref{thmdiedhduw}}\label{secproof1}
We will need the following lemma,
\begin{Lemma}\label{lemguyg66g8}
Let $(q_n,p_n)$ be the solution of \eqref{implicitmidpoint}. Write $E_n:=\frac{1}{2}p_n^T M^{-1} p_n+\frac{1}{2}q_n^T K q_n$. Using the notation $|f|_{M^{-1}}:=\sqrt{f^T M^{-1} f}$ we have
\begin{equation}
|\sqrt{E_{n}}-\sqrt{E_0}|\leq \dt 2^{-1/2} \sum_{k=0}^{n-1}|f_{k+\frac{1}{2}}|_{M^{-1}}
\end{equation}
\end{Lemma}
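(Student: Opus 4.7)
The plan is to derive a one-step identity for the change in $E_n$ under the midpoint rule, extract a square-root difference from it, and then telescope.

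First, I would introduce the midpoint averages $\bar q:=(q_n+q_{n+1})/2$ and $\bar p:=(p_n+p_{n+1})/2$, so that \eqref{implicitmidpoint} reads $q_{n+1}-q_n=\dt M^{-1}\bar p$ and $p_{n+1}-p_n=-\dt K\bar q+\dt f_{n+\frac12}$. Using symmetry of $M$ and $K$, one computes
\begin{equation*}
E_{n+1}-E_n=(p_{n+1}-p_n)^T M^{-1}\bar p+(q_{n+1}-q_n)^T K\bar q.
\end{equation*}
Substituting the midpoint increments and observing that $\bar p^T M^{-1}K\bar q=\bar q^T K M^{-1}\bar p$ (since $K,M^{-1}$ are symmetric), the two Hamiltonian contributions cancel, leaving the clean identity
\begin{equation*}
E_{n+1}-E_n=\dt\,f_{n+\frac12}^T M^{-1}\bar p.
\end{equation*}
This is the step where quadratic invariants of the implicit midpoint rule manifest themselves; it is not really an obstacle, but it is the only place where the structure of the scheme enters.

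Next I would apply Cauchy--Schwarz with respect to the $M^{-1}$ inner product and use the triangle inequality on $\bar p=(p_n+p_{n+1})/2$ together with the trivial bound $|p_k|_{M^{-1}}\leq\sqrt{2E_k}$:
\begin{equation*}
|E_{n+1}-E_n|\leq \dt\,|f_{n+\frac12}|_{M^{-1}}\,\tfrac{1}{\sqrt 2}\bigl(\sqrt{E_n}+\sqrt{E_{n+1}}\bigr).
\end{equation*}
Factoring $E_{n+1}-E_n=(\sqrt{E_{n+1}}-\sqrt{E_n})(\sqrt{E_{n+1}}+\sqrt{E_n})$ and dividing (the bound is vacuous if $E_n=E_{n+1}=0$) yields
\begin{equation*}
\bigl|\sqrt{E_{n+1}}-\sqrt{E_n}\bigr|\leq 2^{-1/2}\dt\,|f_{n+\frac12}|_{M^{-1}}.
\end{equation*}

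Finally, telescoping from $0$ to $n-1$ and applying the triangle inequality gives the desired estimate
\begin{equation*}
\bigl|\sqrt{E_n}-\sqrt{E_0}\bigr|\leq 2^{-1/2}\dt\sum_{k=0}^{n-1}|f_{k+\frac12}|_{M^{-1}}.
\end{equation*}
The only substantive step is the energy identity, and the rest is bookkeeping with Cauchy--Schwarz and the identity $a^2-b^2=(a-b)(a+b)$.
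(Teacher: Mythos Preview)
Your proof is correct and follows essentially the same approach as the paper's own proof: derive the one-step energy identity $E_{n+1}-E_n=\dt\,f_{n+\frac12}^T M^{-1}\bar p$ (the paper obtains this by multiplying the two midpoint equations by $(q_{n+1}+q_n)^T K$ and $(p_{n+1}+p_n)^T M^{-1}$ and summing, which is exactly your computation in different notation), then apply Cauchy--Schwarz in the $M^{-1}$ inner product, bound $|\bar p|_{M^{-1}}\leq 2^{-1/2}(\sqrt{E_n}+\sqrt{E_{n+1}})$, divide out $\sqrt{E_{n+1}}+\sqrt{E_n}$, and telescope. Your explicit handling of the degenerate case $E_n=E_{n+1}=0$ is a small improvement in presentation.
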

\begin{proof}
Multiplying the first line of \eqref{implicitmidpoint} by $(q_{n+1}+q_n)^T K$, the second line by $(p_{n+1}+p_n)^T M^{-1}$ and summing together, we obtain that
\begin{displaymath}
E_{n+1}-E_n=\dt (p_{n+1}+p_n)^T M^{-1} \frac{f_{n+\frac{1}{2}}}{2}.
\end{displaymath}

Observe that
\begin{displaymath}
|(p_{n+1}+p_n)^T M^{-1} f_{n+\frac{1}{2}}| \leq  |p_{n+1}+p_n|_{M^{-1}} |f_{n+\frac{1}{2}}|_{M^{-1}} \leq \sqrt{2} (\sqrt{E_{n+1}}+\sqrt{E_{n}}) |f_{n+\frac{1}{2}}|_{M^{-1}}.
\end{displaymath}

We have $|\sqrt{E_{n+1}}-\sqrt{E_n}|\leq \dt 2^{-1/2}|f_{n+\frac{1}{2}}|_{M^{-1}}$, and we conclude the proof by induction.
\end{proof}
Let us now prove Theorem \ref{thmdiedhduw}.
Line \ref{linfemwavegx} of Algorithm \ref{wavequalgfemaltmix} and Theorems \ref{tmdiscrete} and \ref{tmdiscreteaccuracy} imply that
$(\frac{4}{(\dt)^2}M+ K) q_{n+1}^{\app} =b_n+(\frac{4}{(\dt)^2}M+ K) e_n$ with $b_n=(\frac{4}{(\dt)^2} M- K)q_n^{\app}+  \frac{4}{\dt} p_n^{\app}+2 f_{n+\frac{1}{2}}$ and
\begin{equation}\label{eqiidhud}
e_n^T (\frac{4}{(\dt)^2}M+ K)e_n \leq C \epsilon^2 b_n^T K^{-1} b_n
\end{equation}
Therefore, lines \ref{linfemwavegx} and \ref{linfemwavegx2} of Algorithm \ref{wavequalgfemaltmix}
can be written as,
\begin{equation}\label{implictitmihjhgdpret}
\begin{cases}
q_{n+1}^{\app}-q_n^{\app}&=\dt M^{-1}\frac{p_n^{\app}+p_{n+1}^{\app}}{2}+s_n\\
p_{n+1}^{\app}-p_n^{\app}&=- \dt K \frac{q_n^{\app}+q_{n+1}^{\app}}{2}+\dt f_{n+\frac{1}{2}}
\end{cases}
\end{equation}
with
\begin{equation}\label{eqidudhucmx}
s_n =\frac{(\dt)^2}{4}M^{-1}(\frac{4}{(\dt)^2}M+ K) e_n
\end{equation}
Write $q_{n}^\err:=q_{n}^{\app}-q_n$ and $p_{n}^\err:=p_{n}^{\app}-p_n$, together with \eqref{implictitmihjhgdpret} with \eqref{eqidudhucmx}, it leads to
\begin{equation}\label{eqimeximplsiuwszlocfem}
\begin{cases}
q_{n+1}^{\err}-q_n^{\err}&=\dt M^{-1}\frac{p_n^{\err}+p_{n+1}^{\err}}{2}+s_n\\
p_{n+1}^{\err}-p_n^{\err}&=- \dt K \frac{q_n^{\err}+q_{n+1}^{\err}}{2}
\end{cases}
\end{equation}
Write $E_n^\err:=\frac{1}{2}p_n^{\err,T} M^{-1} p_n^{\err}+\frac{1}{2}q_n^{\err,T} K q_n^{\err}$.
Left multiplying the first equation of \eqref{eqimeximplsiuwszlocfem} by $\frac{1}{2}(q_{n+1}^{\err}+q_n^{\err})^T K$ and the second equation by $\frac{1}{2} (p_{n+1}^{\err}+p_n^{\err})M^{-1}$, then adding the resulting equations we obtain that
\begin{equation}\label{eqimexswocfeer2m}
E^\err_{n+1}-E^\err_n=\frac{1}{2}(q_{n+1}^{\err}+q_n^{\err})^T K s_n
\end{equation}
Observing that $|(q_{n+1}^{\err}+q_n^{\err})^T K s_n|\leq \sqrt{2}(\sqrt{E^\err_{n+1}}+\sqrt{E^\err_n})|s_n|_{K }$, we have
\begin{equation}\label{eqimexswocfeesser2m}
|\sqrt{E^\err_{n+1}}-\sqrt{E^\err_n}|\leq 2^{-\frac{1}{2}}|s_n|_{K }
\end{equation}
Write $B:=\frac{4}{(\dt)^2}M+ K$, \eqref{eqidudhucmx} and \eqref{eqiidhud} imply that
\begin{equation}
(\frac{4}{(\dt)^2})^2   s_n^T M  B^{-1} M s_n \leq C \epsilon^2 b_n^T K^{-1} b_n\,.
\end{equation}
We also have
\begin{equation}
 s_n^T K s_n \leq \frac{\lambda_{\max}(K) \lambda_{\max}(B)}{(\lambda_{\min}(M))^2} s_n^T M  B^{-1} M s_n,
\end{equation} and
\begin{equation}
\begin{split}|b_n|_{K^{-1}}\leq &\big(\frac{4}{(\dt)^2} \frac{\lambda_{\max}(M)}{\lambda_{\min}(K)} +1)  |q_n^{\app}|_{K}+\frac{4}{\dt} \sqrt{\frac{\lambda_{\max}(M)}{\lambda_{\min}(K)}}|p_n^{\app}|_{M^{-1}}\\&+
2 \sqrt{\frac{\lambda_{\max}(M)}{\lambda_{\min}(K)}} \|g_{n+\frac{1}{2}}\|_{L^2(\Omega)}.
\end{split}\end{equation}
Poincar\'{e}'s inequality, \eqref{eqhhgfff65f} and \eqref{eqhhgfff65fip} lead to
\begin{align}
\lambda_{\max}(M) \leq C N^{-1}, \qquad & C^{-1} N^{-1} \leq \lambda_{\min}(M),\\
\lambda_{\max}(K) \leq C N^{-1} h^{-2}, \qquad & C^{-1} N^{-1} \leq \lambda_{\min}(K)\,.
\end{align}
Summarizing we have obtained that
$|s_n|_{K }^2 \leq C N h^{-2} (\frac{4}{(\dt)^2}N^{-1}+ h^{-2} N^{-1})$ $\dt^4 \epsilon^2 b_n^T K^{-1} b_n
$, which implies
\begin{equation}
|s_n|_{K } \leq C  h^{-1}( 1+\frac{\dt}{h})  \epsilon \big(\dt^{-1} \sqrt{E_n^{\app}} +\dt \|g_{n+\frac{1}{2}}\|_{L^2(\Omega)}\big)\,.
\end{equation}
where $E_n^\app:=\frac{1}{2}p_n^{\app,T} M^{-1} p_n^{\app}+\frac{1}{2}q_n^{\app,T} K q_n^{\app}$. Recall that $E_n:=\frac{1}{2}p_n^{T} M^{-1} p_n+\frac{1}{2}q_n^{T} K q_n$,
using $\sqrt{E_n^{\app}}\leq \sqrt{E_n}+\sqrt{E_n^{\err}}$, we deduce from \eqref{eqimexswocfeesser2m} that
\begin{equation}\label{eqimytfsser2djum}
\sqrt{E^\err_{n+1}}-\sqrt{E^\err_n}\leq (c_n +z \sqrt{E^\err_n})
\end{equation}
with $ c_n=C ( \frac{1}{h}+ \frac{\dt}{h^2})  \epsilon \big(\dt^{-1}   \sqrt{E_n} +\dt \|g_{n+\frac{1}{2}}\|_{L^2(\Omega)}\big)$ and
$ z = C  \frac{1}{h}( \frac{1}{\dt}+\frac{1}{h})  \epsilon$. Therefore (using $E_0^{\err}=0$) we obtain that $\sqrt{E^\err_{n}}\leq \sum_{k=0}^{n-1} c_k (1+z)^{n-k-1}$.
For $ \epsilon \leq \frac{1}{4}C^{-1}\frac{\dt}{T}h(\dt+h)$, we have $z\leq \dt/ T$, therefore $(1+z)^{n}\leq  e^1$ and $\sqrt{E^\err_{n}}\leq C \sum_{k=0}^{n-1} c_k$.
Using $n \dt \leq T$ and $\|g_{k+\frac{1}{2}}\|_{L^2(\Omega)} \leq  \|g\|_{L^\infty(0,T,L^2(\Omega))}$,
Lemma \ref{lemguyg66g8} implies that
\[
	\sqrt{E_{n}}\leq \sqrt{E_0}+ \dt 2^{-1/2} \sum_{k=0}^{n-1}\|g_{k+\frac{1}{2}}\|_{L^2(\Omega)}\leq \sqrt{E_0}+ T  2^{-1/2} \|g\|_{L^\infty(0,T,L^2(\Omega))}.
\]
Using
$\sum_{k=0}^{n-1} c_k  \leq C( \frac{1}{h}+ \frac{\dt}{h^2})  \epsilon\big( \dt^{-1} \sum_{k=0}^{n-1} \sqrt{E_k} +\dt
\sum_{k=0}^{n-1} \|g_{k+\frac{1}{2}}\|_{L^2(\Omega)}\big)$ we obtain that $\sum_{k=0}^{n-1} c_k
										 \leq C \dt^{-2} ( \frac{1}{h}+ \frac{\dt}{h^2}) \epsilon T \big(   \sqrt{E_0}+ T   \|g\|_{L^\infty(0,T,L^2(\Omega))}\big)$ and
\begin{equation}
\sqrt{E_{n}^\err}\leq \dt^2 \big(  \sqrt{E_0}+ T   \|g\|_{L^\infty(0,T,L^2(\Omega))}\big)
\end{equation}
for $\epsilon \leq C^{-1}\frac{1}{T}\dt^4\frac{h}{\dt}(\dt+h)$.
We conclude the proof by observing that
$2 E^\err_{n}=\int_{\Omega}(\nabla u_n- \nabla u^\app_n)^T a (\nabla u_n- \nabla u^\app_n) +\int_{\Omega}(v_n-v^\app_n)^2 \mu$.

\subsection{Stability}

\begin{Definition}
A function $f(t,x)$ is \textit{dissipative} if $(f(t,y)-f(t,z)), (y-z))\leq 0$ for all $y$ and $z$. An ODE is \textit{contractive} if $\|y(t)-z(t)\|\leq \|y(s)-z(s)\|$ for every pair of solutions $y$ and $z$ when $t\geq s$. Every ODE with a dissipative right-hand side $f$ is contractive.
\end{Definition}

It is easy to see that equation \eqref{eqn:parabolicFE} is contractive.

\begin{Definition}
\label{def:b-stability}
A numerical method is B-stable (or contractive) if every pair of numerical solutions $u$ and $v$ satisfy $\|u_{n+1} - v_{n+1}\|\leq \|u_n - v_n\|$ for all $n\geq 0$, when solving an IVP with a dissipative $f$.
\end{Definition}

\begin{Definition}
	A Runge-Kutta method is algebraically stable if the matrices
	\begin{displaymath}	
		B = \diag(b_1, \dots, b_s), \qquad M = BA + A^TB^T - bb^T
	\end{displaymath}
	are nonnegative semidefinite. An algebraically stable Runge Kutta method is B-stable.
\end{Definition}

\subsection{Proof of Theorem \ref{thm:implicitEuler}}\label{subsecproofthmimplicitEuler}

The implicit Euler scheme \eqref{eqkjddkhsthimeul} can be written as
\begin{displaymath}
	(\frac{M}{\dt}+K)q_{n+1} = \frac{M}{\dt}q_n + f_{n+1}.
\end{displaymath}
 Using localized gamblets, \eqref{eqkjddkhsthimeul} is solved up to error $e_n$, i.e.
\begin{displaymath}
	(\frac{M}{\dt}+K)q_{n+1}^\app = \frac{M}{\dt}q_n^\app + f_{n+1} + (\frac{M}{\dt}+K)e_n
\end{displaymath}	
and $e_n$ satisfies, $e_n^T (\frac{M}{\dt} + K) e_n \leq C\varepsilon^2 b_n^T K^{-1} b_n$. Write $\|q\|_\zeta^2:=q^T (\frac{M}{\dt}+K)q$.

\begin{Lemma}
\label{lem:implicitEuler}
It holds true that $\|q_{n+1}\|_\zeta \leq \|q_n\|_\zeta + \|f_{n+1}\|_{H^{-1}(\Omega)}$.
\end{Lemma}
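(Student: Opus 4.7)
The plan is to square the left-hand side, substitute the implicit Euler identity, and apply Cauchy--Schwarz in the two natural semidefinite inner products induced by $M/\dt$ and by $K$. Writing $A := \frac{M}{\dt} + K$, so that $\|q\|_\zeta^2 = q^T A q$ and $A q_{n+1} = \frac{M}{\dt} q_n + f_{n+1}$, I would start from
\begin{equation*}
\|q_{n+1}\|_\zeta^2 \;=\; q_{n+1}^T A q_{n+1} \;=\; q_{n+1}^T \tfrac{M}{\dt}\, q_n \;+\; q_{n+1}^T f_{n+1},
\end{equation*}
and bound the two terms on the right separately.

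For the mass term I would use Cauchy--Schwarz in the $(M/\dt)$-semi-inner product followed by the trivial domination of the $(M/\dt)$-seminorm by the $A$-norm (which follows from $K \succeq 0$):
\begin{equation*}
q_{n+1}^T \tfrac{M}{\dt}\, q_n \;\leq\; \bigl(q_{n+1}^T \tfrac{M}{\dt} q_{n+1}\bigr)^{1/2} \bigl(q_n^T \tfrac{M}{\dt} q_n\bigr)^{1/2} \;\leq\; \|q_{n+1}\|_\zeta\, \|q_n\|_\zeta .
\end{equation*}
For the force term I would identify the pairing with an integral: writing $u_{n+1}^\app = \sum_{i} q_{n+1,i}\varphi_i \in \V$ and recalling that $f_{n+1,i} = \int_\Omega \varphi_i\, g_{n+1}$, we have $q_{n+1}^T f_{n+1} = \int_\Omega u_{n+1}^\app\, g_{n+1}$, and then the duality $H^1_0/H^{-1}$ (with $H^1_0$ equipped with the $a$-weighted energy norm, which is the natural convention here) yields
\begin{equation*}
q_{n+1}^T f_{n+1} \;\leq\; \|u_{n+1}^\app\|_{H^1_0(\Omega)} \,\|g_{n+1}\|_{H^{-1}(\Omega)} \;\leq\; \|q_{n+1}\|_\zeta\, \|f_{n+1}\|_{H^{-1}(\Omega)},
\end{equation*}
using $\|u_{n+1}^\app\|_{H^1_0(\Omega)}^2 = q_{n+1}^T K q_{n+1} \leq q_{n+1}^T A q_{n+1} = \|q_{n+1}\|_\zeta^2$ in the last step, again from $M/\dt \succeq 0$.

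Combining the two bounds gives $\|q_{n+1}\|_\zeta^2 \leq \|q_{n+1}\|_\zeta\bigl(\|q_n\|_\zeta + \|f_{n+1}\|_{H^{-1}(\Omega)}\bigr)$, and dividing through by $\|q_{n+1}\|_\zeta$ (the case $\|q_{n+1}\|_\zeta = 0$ being trivial) delivers the claim. The only genuinely delicate point — which I expect to be the main obstacle in matching the constant-free form of the statement — is the normalization convention for $\|\cdot\|_{H^{-1}(\Omega)}$: one must pair it with the $a$-weighted $H^1_0$ norm (rather than the plain Dirichlet norm) so that no $\lambda_{\min}(a)$ or $\lambda_{\max}(a)$ factor leaks into the final estimate; otherwise the inequality holds up to an absolute constant absorbed into $\|f_{n+1}\|_{H^{-1}(\Omega)}$.
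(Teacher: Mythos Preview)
Your proof is correct and follows essentially the same route as the paper: both start from the identity $\|q_{n+1}\|_\zeta^2 = q_{n+1}^T\tfrac{M}{\dt}q_n + q_{n+1}^T f_{n+1}$ and estimate the two terms via the $M/\dt$- and $K$-induced inner products. The only cosmetic difference is that the paper applies Young's inequality to obtain the squared bound $\|q_{n+1}\|_\zeta^2 \leq \|q_n\|_\zeta^2 + f_{n+1}^T K^{-1} f_{n+1}$ and then takes a square root, whereas you use Cauchy--Schwarz and divide through by $\|q_{n+1}\|_\zeta$; your remark about the $a$-weighted normalization of $\|\cdot\|_{H^{-1}(\Omega)}$ is exactly the convention implicitly in force.
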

\begin{proof}
	Multiplying \eqref{eqkjddkhsthimeul} by $q_{n+1}$ and using Young's inequality we obtain that
$q_{n+1}^T(\frac{M}{\dt}+K)q_{n+1}  = q_{n+1}^T \frac{M}{\dt} q_n + q^T_{n+1} f_{n+1}$ and $\|q_{n+1}\|_\zeta^2 \leq \frac12 q^T_{n+1} \frac{M}{\dt} q_{n+1} + \frac12 q_n^T \frac{M}{\dt} q_n + \frac12 q^T_{n+1} K q_{n+1} + \frac12 f^T_{n+1} K^{-1} f_{n+1}$.
Therefore,
\begin{displaymath}
	q_{n+1}^T(\frac{M}{\dt}+K)q_{n+1} \leq q_n^T (\frac{M}{\dt}+K) q_n + f^T_{n+1} K^{-1} f_{n+1}.
\end{displaymath}
which concludes the proof of Lemma \ref{lem:implicitEuler}.
\end{proof}
Let $\epsilon_n := q_n - q_n^\app$, then we have
$
	(\frac{M}{\dt}+K)\epsilon_{n+1} = \frac{M}{\dt}\epsilon_n + (\frac{M}{\dt}+K)e_n	
$
and
$(\frac{M}{\dt}+K)(\epsilon_{n+1}-e_n) = \frac{M}{\dt}\epsilon_n	
$.
Multiplying by $\epsilon_{n+1} - e_n$ and using Young's inequality, we obtain that
$(\epsilon_{n+1} - e_n)^T (\frac{M}{\dt} + K) (\epsilon_{n+1} - e_n)  = (\epsilon_{n+1} - e_n)^T (\frac{M}{\dt}) \epsilon_n$
and $\|\epsilon_{n+1}-e_n\|_\zeta^2 \leq \frac12 (\epsilon_{n+1} - e_n)^T (\frac{M}{\dt} + K) (\epsilon_{n+1} - e_n) + \frac12 \epsilon_n^T (\frac{M}{\dt} + K) \epsilon_n$. Therefore,
$
	\|\epsilon_{n+1}-e_n\|_\zeta \leq \|\epsilon_n\|_\zeta
$
and
\begin{equation}
\label{eqn:veineq}
	\|\epsilon_{n+1}\|_\zeta\leq \|\epsilon_n\|_\zeta + \|e_n\|_\zeta\,.
\end{equation}
Since $\|e_n\|^2_\zeta \leq C\varepsilon^2 b_n^T K^{-1} b_n$, and $b_n = \frac{M}{\dt} q_n^\app + f_{n+1}$, we have
$
	b_n^TK^{-1} b_n  = q_n^\app \frac{M}{\dt} K^{-1} \frac{M}{\dt} q_n^\app + f_{n+1}K^{-1} f_{n+1}$ and
	$b_n^TK^{-1} b_n\leq q_n^\app K q_n^\app (\frac{\lambda_{\max} (M)}{\lambda_{\min} (K)})^2 + f_{n+1}K^{-1} f_{n+1}$.
Therefore,
$\|b_n\|_{K^{-1}} \leq \|q_n^\app\|_\zeta + \|f_{n+1}\|_{K^{-1}(\Omega)}$. Using
 \eqref{eqn:veineq} we deduce that
$\|e_n\|_\zeta  \leq C \epsilon (\|q_n\|_\zeta + \|\epsilon_n\|_\zeta + \|f_{n+1}\|_{K^{-1}(\Omega)}) \leq C \epsilon \frac{T}{\dt} \|g\|_{L^\infty(0, T, H^{-1}(\Omega))} + C\epsilon \|\epsilon_n\|_\zeta
$. Hence,
$\|\epsilon_{n+1}\|_\zeta  \leq C \frac{T}{\dt} \epsilon \|g\|_{L^\infty(0, T, H^{-1}(\Omega))} + (1+C\epsilon)\|\epsilon_n\|_\zeta  \leq C (\frac{T}{\dt})^2 \epsilon  \|g\|_{L^\infty(0, T, H^{-1}(\Omega))}$,
which finishes the proof of Theorem \ref{thm:implicitEuler}.

\paragraph{Acknowledgements.}
H. Owhadi gratefully acknowledges the support of the Air Force Office of Scientific Research and the DARPA EQUiPS Program under
award  number FA9550-16-1-0054 (Computational Information Games). L. Zhang gratefully acknowledges the support of the National Natural Science Foundation of China grant 11471214 and the One Thousand Plan of China for young scientists. The authors also thank two anonymous referees for comments and suggestions.

\bibliographystyle{plain}
\bibliography{RPS}

\end{document}